\newtheorem{thm}{Theorem}[section]
\newtheorem{lem}[thm]{Lemma}
\newtheorem{prop}[thm]{Proposition}
\theoremstyle{definition}
\theoremstyle{remark}
\newtheorem{rem}[thm]{Remark}
\theoremstyle{example}
\theoremstyle{conjecture}
\numberwithin{equation}{section}
\newcommand{\CC}{{\mathbb C}}
\newcommand{\NN}{{\mathbb N}}
\newcommand{\calC}{{\mathcal C}}
\newcommand{\calF}{{\mathcal F}}
\newcommand{\calM}{{\mathcal M }}
\newcommand{\calO}{{\mathcal O}}
\newcommand{\calW}{{\mathcal W}}
\begin{document}

\title[Topological structure]{Topological structure of the space of (weighted) composition operators between Fock spaces in several variables}

\author{Pham Trong Tien$^1$ \& Le Hai Khoi$^2$}%

\address{(Tien) Department of Mathematics, Mechanics and Informatics, Hanoi University of Science, VNU, 334 Nguyen Trai, Hanoi, Vietnam}%
\address{\quad \quad \quad Thang Long Institute of Mathematics and Applied Sciences, Nghiem Xuan Yem,
Hoang Mai, Hanoi, Vietnam}%
\email{phamtien@vnu.edu.vn, phamtien@mail.ru}

\address{(Khoi) Division of Mathematical Sciences, School of Physical and Mathematical Sciences, Nanyang Technological University (NTU),
637371 Singapore}%
\email{lhkhoi@ntu.edu.sg}

\thanks{$^1$  Supported in part by NAFOSTED grant No. 101.02-2017.313.\\
\indent $^2$ Supported in part by MOE's AcRF Tier 1 M4011724.110 (RG128/16)}

\subjclass[2010]{47B33, 47B38, 32A15}%

\keywords{Topological structure, Fock space, Composition operators, Weighted composition operator}%

\date{\today}%



\begin{abstract}
In this paper, we consider the topological structure problem for spaces of composition operators as well as nonzero weighted composition operators acting from a Fock space $\calF^p(\CC^n)$ to another one $\calF^q(\CC^n)$. Explicit descriptions of all (path) connected components and isolated points in these spaces are obtained. 
\end{abstract}




\maketitle

\section{Introduction}
Composition operators $C_{\varphi}$ and weighted composition operators $W_{\psi,\varphi}$ have been intensively investigated on various Banach spaces of holomorphic functions on the unit disc or the unit ball during the past several decades in different directions. 
One of the recent main problems in the study of such operators is to characterize (path) connected components and isolated points in spaces of these operators endowed with the operator norm topology. There is a huge literature in this topic: \cite{B-81, B-03, GGNS-08, M-89, IO-14, SS-90} on Hardy spaces, \cite{M-89, M-05} on Bergman spaces, \cite{HIO-05, MOZ-01,IIO-12, T-04} on spaces $H^{\infty}$ of all bounded holomorphic functions, \cite{HO-06} on Bloch spaces, etc.
On many spaces, this question is difficult and not yet solved completely.

Recently, much progress was made in the study of (weighted) composition operators on Fock spaces (see, for instance, \cite{CMS-03, CIK-10, T-14, HK-16-1, U-07}). One of the main differences between operators $C_{\varphi}$ and $W_{\psi,\varphi}$ on Fock spaces and those on the above-mentioned spaces of holomorphic functions on the unit disc or the unit ball is the lack of bounded holomorphic functions in the Fock space setting. In fact, entire functions $\varphi$ that induce bounded operators $C_{\varphi}$ and $W_{\psi,\varphi}$ are quite restrictive, in details, they are only affine functions. Thanks to this, some difficult problems were completely solved on Fock spaces. 
In particular, concerning the topological structure, the path connected components and isolated points in the space of composition operators on the Hilbert Fock space in several variables were characterized in \cite{D-15}.
Later, in \cite{TK-17} the authors obtained complete descriptions of all (path) connected components and isolated points in not only  the space of composition operators but also the space of nonzero weighted composition operators between different general Fock spaces in one variable. 

The aim of this paper is to develop the study related to the topological structure in \cite{TK-17} for the case of several variables. Roughly speaking, our main result is to give complete answers to all important questions of the topological structure problem for both spaces of composition operators and nonzero weighted composition operators in the Fock space context. It should be noted that the techniques used in \cite{D-15} for Hilbert Fock spaces cannot be applied to this paper for general ones. Also the techniques in several variables are much more complicated than in one variable. 

The paper is organized as follows. In Section 2 we recall some preliminaries results on general Fock spaces $\calF^p(\CC^n)$ and (weighted) composition operators between different Fock spaces. 
Section 3 is devoted to the space $\calC(\calF^p(\CC^n), \calF^q(\CC^n))$ of composition operators acting from a Fock space $\calF^p(\CC^n)$ to another one $\calF^q(\CC^n)$. We prove that if $0 < q < p < \infty$, then the space $\calC(\calF^p(\CC^n), \calF^q(\CC^n))$ is path connected (Theorem \ref{thm-tp-1}). In the case $0 < p \leq q < \infty$, we completely determine all (path) connected components and isolated points in $\calC(\calF^p(\CC^n), \calF^q(\CC^n))$ (Theorem \ref{thm-tp-co}).
The study of the space $\calC_w(\calF^p(\CC^n), \calF^q(\CC^n))$  of nonzero weighted composition operators acting from a Fock space $\calF^p(\CC^n)$ to another one $\calF^q(\CC^n)$ is more complicated and carried out in Section 4. In Theorem \ref{thm-tp-qp}, we show that the space $\calC_w(\calF^p(\CC^n), \calF^q(\CC^n))$ is also path connected when $0 < q < p < \infty$, while all (path) connected components of $\calC_w(\calF^p(\CC^n), \calF^q(\CC^n))$ when $0 < p \leq q < \infty$ are characterized in Theorem \ref{thm-main-tp}.

It should be noted that the key technique in this paper is to study composition operators $C_{\varphi}$ and weighted composition operators $W_{\psi,\varphi}$ via the operators $C_{\widetilde{\varphi}}$ and $W_{\widetilde{\psi}, \widetilde{\varphi}}$, which are induced by the so-called \textit{normalizations} $\widetilde{\varphi}$ of $\varphi$ and $(\widetilde{\psi}, \widetilde{\varphi})$ of $(\psi, \varphi)$, respectively.

\section{Preliminaries}
Recall that for a number $p \in (0,\infty)$, the Fock space $\calF^p(\CC^n)$ consists of all entire functions $f$ on $\CC^n$ for which
$$
\|f\|_{n,p} = \left( \left(\dfrac{p}{2\pi}\right)^n \int_{\CC^n} |f(z)|^p e^{-\frac{p|z|^2}{2}}dA(z) \right)^{\frac{1}{p}} < \infty,
$$
where $dA$ is the Lebesgue measure on $\CC^n$. 
It is well known that $\calF^p(\CC^n)$ with $1 \leq p < \infty$ is a Banach space, while for $0 < p < 1$, $\calF^p(\CC^n)$ is a complete metric space with the distance $d(f,g) = \|f-g\|^p_{n, p}$.

For each $w \in \CC^n$, we define the functions
$$
K_w(z) = e^{\langle z,w \rangle} \text{ and } k_w(z) = e^{\langle z,w \rangle - \frac{|w|^2}{2}}, \quad z \in \CC^n,
$$
where $\langle z, w \rangle = z_1 \overline{w_1} + \cdots + z_n \overline{w_n}$ and $|w|^2 = \langle w, w \rangle$. 
Then $\|k_w\|_{n, p} = 1$ for all $w \in \CC^n$ and $0< p < \infty$, and $k_w$ converges to $0$ in the space $\calO(\CC^n)$ as $|w| \to \infty$, where $\calO(\CC^n)$ is the space of all entire functions on $\CC^n$ with the usual compact open topology.

We give some notation and auxiliary results which will be used throughout the paper.

For each point $z = (z_1,...,z_n) \in \CC^n$ and $0 \leq s \leq n$, we define 
$$
z_{[s]} = 
\begin{cases}
\emptyset, \quad \quad \quad \ \ \text{ if } s = 0 \\
(z_1,...,z_s), \text{ if } s \neq 0,
\end{cases}
\text{and  }
z'_{[s]} = 
\begin{cases}
(z_{s+1},...,z_n), \text{ if } s \neq n\\
\emptyset, \quad \quad \quad \quad \ \ \text{ if } s = n,
\end{cases}
$$
by convention that 
$ |z_{[0]}| = 0$ and $|z'_{[n]}| = 0$.

For each  $z = (z_1,...,z_n)\in \mathbb C^n$ and $1 \leq i \leq n$, we put
$$
z'_i = 
\begin{cases} 
  (z_2,...,z_n), \qquad \qquad \qquad \text{if } i = 1,\\
  (z_1,...,z_{i-1}, z_{i+1},...,z_n), \ \text{ if } 2 \leq i \leq n-1,\\
  (z_1,...,z_{n-1}), \qquad \qquad \quad \text{ if } i = n,
\end{cases}
$$
and, briefly, write $z = (z_i,z'_i)$.

For an $n \times n$ diagonal matrix $A$ and $0 < s < n$, we denote by $A_{[s]}$ the principal submatrix of $A$ with diagonal entries $a_{ii}, i = 1,...,s$, and by $A'_{[s]}$ the principal submatrix of $A$ with diagonal entries $a_{ii}, i = s+1,...,n$. 

The following lemmas can be found in \cite[Section 2]{TK-17-1}.

\begin{lem}\label{lem-F}
Let $p \in (0,\infty)$, $b = (b_1,...,b_n)$ be a point in $\CC^n$, and $f \in \calF^p(\CC^n)$. For each $0 < s < n$ the following statements are true: 
\begin{itemize}
\item[(i)] The function $f(b_{[s]},\cdot) \in \calF^p(\CC^{n-s})$ and 
$$
\|f(b_{[s]},\cdot)\|_{n-s, p}\; e^{-\frac{\left|b_{[s]}\right|^2}{2}} \leq \|f\|_{n, p}.
$$ 
\item[(ii)] The function $f(\cdot,b'_{[s]}) \in \calF^p(\CC^s)$ and 
$$
\|f(\cdot,b'_{[s]})\|_{s, p}\; e^{-\frac{\left|b'_{[s]}\right|^2}{2}} \leq \|f\|_{n, p}.
$$ 
\end{itemize}
\end{lem}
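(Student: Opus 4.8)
The plan is to reduce the $n$-variable inequality to a single pointwise estimate on an $s$-dimensional slice and then integrate out the remaining $n-s$ variables. I will only write the argument for (i); statement (ii) will follow by the same reasoning after permuting the coordinates of $\CC^n$, since the norm $\|\cdot\|_{n,p}$ is invariant under coordinate permutations.

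First I would write a point of $\CC^n$ as $(u,w)$ with $u\in\CC^s$ and $w\in\CC^{n-s}$, and fix $w$. Since $f$ is entire, $u\mapsto f(u,w)$ is holomorphic on $\CC^s$, so $\log|f(\cdot,w)|$ is plurisubharmonic; as $u\mapsto\re\langle u,b_{[s]}\rangle$ is pluriharmonic, the function
$$\Phi_w(u):=|f(u,w)|^p\,e^{-p\,\re\langle u,\,b_{[s]}\rangle}$$
is plurisubharmonic on $\CC^s$, being the exponential of a plurisubharmonic function, and in particular subharmonic. The key algebraic step is the identity $\re\langle u,b_{[s]}\rangle-\tfrac12|u|^2=\tfrac12|b_{[s]}|^2-\tfrac12|u-b_{[s]}|^2$, which turns the Fock weight into
$$|f(u,w)|^p e^{-\frac{p}{2}|u|^2}=e^{\frac{p}{2}|b_{[s]}|^2}\,\Phi_w(u)\,e^{-\frac{p}{2}|u-b_{[s]}|^2}.$$
Integrating this in $u$, passing to polar coordinates centred at $b_{[s]}$, using the sub-mean value inequality for $\Phi_w$ over the spheres $\{\,|u-b_{[s]}|=r\,\}$, and recalling that $\big(\tfrac{p}{2\pi}\big)^s\int_{\CC^s}e^{-\frac{p}{2}|u|^2}dA(u)=1$, I expect to obtain, for every $w\in\CC^{n-s}$,
$$\Big(\tfrac{p}{2\pi}\Big)^s\int_{\CC^s}|f(u,w)|^p e^{-\frac{p}{2}|u|^2}\,dA(u)\ \ge\ e^{\frac{p}{2}|b_{[s]}|^2}\,\Phi_w(b_{[s]})\ =\ |f(b_{[s]},w)|^p e^{-\frac{p}{2}|b_{[s]}|^2},$$
with the convention that both sides may equal $+\infty$.

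To close the argument I would multiply through by $\big(\tfrac{p}{2\pi}\big)^{n-s}e^{-\frac{p}{2}|w|^2}$ and integrate in $w\in\CC^{n-s}$: by Tonelli's theorem the left-hand side integrates to $\|f\|_{n,p}^p$ and the right-hand side to $e^{-\frac{p}{2}|b_{[s]}|^2}\|f(b_{[s]},\cdot)\|_{n-s,p}^p$, so that
$$\|f(b_{[s]},\cdot)\|_{n-s,p}^p\ \le\ e^{\frac{p}{2}|b_{[s]}|^2}\,\|f\|_{n,p}^p\ <\ \infty,$$
which is (i) after taking $p$-th roots, and which simultaneously shows $f(b_{[s]},\cdot)\in\calF^p(\CC^{n-s})$.

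The hard part is really just the bookkeeping with the Gaussian weight: the whole scheme works because, once the pluriharmonic factor $e^{-p\re\langle u,b_{[s]}\rangle}$ has been absorbed into $\Phi_w$, the residual weight $e^{-\frac{p}{2}|u-b_{[s]}|^2}$ is radially decreasing about the centre $b_{[s]}$ and, after scaling by $(p/2\pi)^s$, is a probability measure, which is exactly what converts the sub-mean value inequality into the stated bound with the sharp constant $e^{|b_{[s]}|^2/2}$ (sharpness is seen on $f=k_{(b_{[s]},0)}$). Everything else should be routine: $|f|^p$ is plurisubharmonic because $t\mapsto e^{pt}$ is convex and increasing, and all interchanges of integration are legitimate since the integrands are nonnegative. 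As an alternative to spherical means in $\CC^s$, one could instead establish the case $s=1$ and iterate it, peeling off one coordinate at a time via $f(b_{[j+1]},\cdot)=\big(f(b_{[j]},\cdot)\big)(b_{j+1},\cdot)$.
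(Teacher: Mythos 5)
Your argument is correct and complete, and every auxiliary fact you invoke is sound: $\Phi_w=e^{\,p\log|f(\cdot,w)|-p\re\langle\cdot,\,b_{[s]}\rangle}$ is indeed plurisubharmonic, the weight $\bigl(\tfrac{p}{2\pi}\bigr)^s e^{-\frac{p}{2}|u-b_{[s]}|^2}$ is a radial decreasing probability density about $b_{[s]}$, so the sub-mean value inequality gives the slice-wise pointwise bound $|f(b_{[s]},w)|^p e^{-\frac{p}{2}|b_{[s]}|^2}\le\bigl(\tfrac{p}{2\pi}\bigr)^s\int_{\CC^s}|f(u,w)|^p e^{-\frac{p}{2}|u|^2}\,dA(u)$, and Tonelli plus permutation invariance of $\|\cdot\|_{n,p}$ finish (i) and (ii). Be aware that the paper itself contains no proof of Lemma \ref{lem-F} --- it is quoted from Section 2 of the reference [TK-17-1] --- but your route (the pointwise evaluation estimate on the $s$-dimensional slice, which is exactly Lemma \ref{lem-F1}(i) in $s$ variables, followed by integration in the remaining variables) is the standard argument and is what that reference carries out.
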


\begin{lem} \label{lem-F1}
Let $p \in (0, \infty)$ and $1 \leq i \leq n$ be given. Then for each function $f \in \calF^p(\CC^{n})$ and $z \in \CC^n$, the following inequalities hold:
\begin{itemize}
\item[(i)] 
$$
|f(z)|e^{-\frac{|z|^2}{2}} \leq \|f\|_{n, p}.
$$
\item[(ii)]
$$
\left|\dfrac{\partial f}{\partial z_i}(z)\right| \leq e^2 (1+|z_i|)e^{\frac{|z|^2}{2}} \|f\|_{n,p}.
$$
\end{itemize}
\end{lem}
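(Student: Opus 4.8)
The two bounds are the sharp pointwise growth estimate in $\calF^p(\CC^n)$ and a first‑order derivative estimate to be deduced from it, so the plan is to prove (i) first. The delicate feature of (i) is that the constant is exactly $1$: a crude sub‑mean value estimate over a fixed ball centered at $z$ would introduce a dimensional constant, so one must instead exploit the translation symmetry of the Gaussian weight. Fix $a\in\CC^n$ and set $h(z)=f(z+a)\,e^{-\langle z,a\rangle-|a|^2/2}$, which is entire. Completing the square in the exponent via $2\re\langle z,a\rangle=|z+a|^2-|z|^2-|a|^2$ gives $|h(z)|^p e^{-p|z|^2/2}=|f(z+a)|^p e^{-p|z+a|^2/2}$, so the substitution $u=z+a$ shows that $h\in\calF^p(\CC^n)$ with $\|h\|_{n,p}=\|f\|_{n,p}$, while $h(0)=f(a)\,e^{-|a|^2/2}$. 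It therefore suffices to prove $|h(0)|\le\|h\|_{n,p}$.

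For this I would use that $\log|h|$ is plurisubharmonic, so $|h|^p$ is subharmonic on $\CC^n\cong\RR^{2n}$, together with the fact that $d\mu(z):=(p/2\pi)^n e^{-p|z|^2/2}\,dA(z)$ is a rotation‑invariant probability measure on $\CC^n$ (its total mass equals $1$ by the elementary Gaussian integral). Averaging the spherical sub‑mean value inequalities for $|h|^p$ against the radial density of $\mu$ yields $|h(0)|^p\le\int_{\CC^n}|h|^p\,d\mu=\|h\|_{n,p}^p$; taking $p$‑th roots and unwinding the definition of $h$ gives $|f(a)|e^{-|a|^2/2}\le\|f\|_{n,p}$, which is (i). (One could alternatively reduce to the one‑variable case by Lemma \ref{lem-F}(i), but that case needs the same trick, so the direct argument is cleaner.)

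To obtain (ii), freeze $z'_i=(z_1,\dots,z_{i-1},z_{i+1},\dots,z_n)$ and apply the one‑variable Cauchy integral formula on the circle $|\zeta-z_i|=r$, giving $\bigl|\frac{\partial f}{\partial z_i}(z)\bigr|\le\frac1r\sup_{|\zeta-z_i|=r}|f(z_1,\dots,\zeta,\dots,z_n)|$. Bounding the supremum by part (i) and using $|\zeta|^2\le|z_i|^2+2r|z_i|+r^2$ on that circle produces $\bigl|\frac{\partial f}{\partial z_i}(z)\bigr|\le\frac1r\,e^{r|z_i|+r^2/2}\,e^{|z|^2/2}\,\|f\|_{n,p}$. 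Choosing $r=1/(1+|z_i|)$ makes $r|z_i|<1$ and $r^2/2<1$, hence $e^{r|z_i|+r^2/2}<e^2$, while $1/r=1+|z_i|$; this is exactly the asserted estimate. The only genuinely nontrivial point in the whole argument is securing the constant $1$ in (i): the translation trick producing $h$ with $\|h\|_{n,p}=\|f\|_{n,p}$ and $h(0)=f(a)e^{-|a|^2/2}$ is what turns the problem into a loss‑free sub‑mean value estimate at the origin, where the probability‑measure property of the Gaussian weight finishes it; the rest is a change of variables and a Cauchy estimate with a convenient radius.
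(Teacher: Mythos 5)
Your proof is correct and, once unwound, amounts to the same argument the paper uses: the paper proves (i) by citing \cite{TK-17-1} and (ii) by freezing $z'_i$, slicing to one variable via Lemma \ref{lem-F}, and invoking the one-variable derivative bound of \cite{TK-17}, whose content is precisely your Cauchy estimate with radius $r=1/(1+|z_i|)$ combined with the pointwise bound (i). You merely replace the citations with self-contained arguments --- the translation-plus-subharmonicity trick for (i) and the direct $n$-variable Cauchy estimate for (ii) --- and both are carried out correctly.
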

\begin{proof}
(i) was proved in \cite[Lemma 2.2]{TK-17-1}.

(ii) For every point $z'_{i} \in \mathbb C^{n-1}$ fixed, by Lemma \ref{lem-F}, the function $f(\cdot, z'_i) \in \calF^p(\mathbb C)$. Then, by \cite[Lemma 2.1]{TK-17}, for every $z_i \in \mathbb C$,
$$
 \left| \dfrac{\partial f}{\partial z_i}(z_i, z'_i)\right| \leq e^2 (1 + |z_i|) e^{\frac{|z_i|^2}{2}} \|f(\cdot,z'_i)\|_{1,p}.
$$
This and Lemma \ref{lem-F} imply the desired inequality. 
\end{proof}

\begin{lem}\label{lem-Fpq}
For every $ 0 < p < q < \infty$, $\calF^p(\CC^n) \subset \calF^q(\CC^n)$ and the inclusion is continuous. Moreover,
$$
\|f\|_{n, q} \leq \left(  \dfrac{q}{p} \right) ^{\frac{n}{q}} \|f\|_{n, p}, \ \forall f \in \calF^p(\CC^n).
$$ 
\end{lem}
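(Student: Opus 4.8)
The plan is to derive the norm inequality directly from a uniform pointwise estimate, after which the inclusion $\calF^p(\CC^n) \subset \calF^q(\CC^n)$ is an immediate corollary. So I would fix $f \in \calF^p(\CC^n)$, assume $\|f\|_{n,p} < \infty$, and aim to show that the integral defining $\|f\|_{n,q}$ converges with the stated bound.

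The key step is to exploit the hypothesis $q > p$ by splitting the integrand for $\|f\|_{n,q}^q$ as
$$
|f(z)|^q e^{-\frac{q|z|^2}{2}} = \left( |f(z)|\, e^{-\frac{|z|^2}{2}} \right)^{q-p} \cdot |f(z)|^p e^{-\frac{p|z|^2}{2}},
$$
where the exponent $q-p$ is positive. By Lemma \ref{lem-F1}(i), the first factor is bounded above by $\|f\|_{n,p}^{q-p}$ uniformly in $z \in \CC^n$. Inserting this bound and pulling the constant outside the integral gives
$$
\|f\|_{n,q}^q = \left(\frac{q}{2\pi}\right)^n \int_{\CC^n} |f(z)|^q e^{-\frac{q|z|^2}{2}}\, dA(z) \leq \left(\frac{q}{2\pi}\right)^n \|f\|_{n,p}^{q-p} \int_{\CC^n} |f(z)|^p e^{-\frac{p|z|^2}{2}}\, dA(z).
$$
Recognizing the remaining integral as $\left(\frac{2\pi}{p}\right)^n \|f\|_{n,p}^p$ and collecting the constants, one obtains $\|f\|_{n,q}^q \leq (q/p)^n \|f\|_{n,p}^q$, which is the claimed estimate after taking $q$-th roots. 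In particular the finiteness of $\|f\|_{n,p}$ forces that of $\|f\|_{n,q}$, so $f \in \calF^q(\CC^n)$, and the displayed inequality is exactly the statement that the inclusion map is continuous.

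I do not expect any genuine obstacle here: the argument uses only elementary manipulation together with the pointwise bound of Lemma \ref{lem-F1}(i), and in particular it avoids Hölder's inequality entirely. The one point to state carefully is that the factorization above is valid also at points where $f(z) = 0$ (both sides vanish), so that no case distinction is needed, and that the same computation applies verbatim in the metric-space range $0 < p < 1$ since it never uses the triangle inequality for $\|\cdot\|_{n,p}$.
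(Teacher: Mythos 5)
Your proof is correct. Note that the paper itself gives no proof of this lemma; it is quoted from \cite[Section 2]{TK-17-1}, so there is no in-paper argument to compare against. Your derivation --- factoring $|f(z)|^q e^{-q|z|^2/2}$ as $\bigl(|f(z)|e^{-|z|^2/2}\bigr)^{q-p}\cdot|f(z)|^p e^{-p|z|^2/2}$, bounding the first factor by $\|f\|_{n,p}^{q-p}$ via Lemma \ref{lem-F1}(i), and tracking the normalizing constants $(q/2\pi)^n$ and $(2\pi/p)^n$ to land exactly on $(q/p)^{n/q}$ --- is the standard argument for this embedding, is free of circularity (Lemma \ref{lem-F1}(i) is established independently), and your remarks about the zero set of $f$ and the quasi-norm range $0<p<1$ are both accurate.
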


Let  $\psi: \CC^n \to \CC$ be a nonzero entire function and $\varphi: \CC^n \to \CC^n$ a holomorphic mapping. The \textit{weighted composition operator} $W_{\psi,\varphi}$ induced by $\psi$ and $\varphi$ is defined as follows $W_{\psi,\varphi}f = \psi \cdot (f \circ \varphi)$. When the function $\psi$ is identically $1$, the operator $W_{\psi,\varphi}$ reduces to the \textit{composition operator} $C_{\varphi}$. 
As in \cite{T-14, TK-17}, we define the following quantities
$$
m_z(\psi,\varphi) = |\psi(z)|e^{\frac{|\varphi(z)|^2 - |z|^2}{2}},\ z \in \CC^n,
$$
and
$$
m(\psi,\varphi) = \sup_{z \in \CC^n} m_z(\psi,\varphi).
$$
In \cite[Section 3]{TK-17-1} it was shown that bounded weighted composition operators from a Fock space $\calF^p(\CC^n)$ to another one $\calF^q(\CC^n)$ can be induced only by nonzero entire functions $\psi \in \calF^q(\CC^n)$ and such mappings $\varphi(z) = Az + b$ with some $n \times n$ matrix $A$, $\|A\| \leq 1$ and $n \times 1$ vector $b$. In particular, boundedness and compactness of composition operators $C_{\varphi}: \calF^p(\CC^n) \to \calF^q(\CC^n)$ were characterized in terms of the matrix $A$. These characterizations will be used in the sequel and for the reader's convenience we state them in the following theorems. The proofs can be founded in \cite[Corollaries 3.11 and 3.14]{TK-17-1}.

\begin{thm}\label{thm-co}
Let $0< p \leq q < \infty$ and $\varphi: \CC^n \to \CC^n$ a holomorphic mapping. The following statements are true.
\begin{itemize}
\item[(a)] The operator $C_{\varphi}: \calF^p(\CC^n) \to \calF^q(\CC^n)$ is bounded if and only if $\varphi(z) = Az + b$, where $A$ is an $n \times n$ matrix and  $b$ is an $n \times 1$ vector such that $\|A\| \leq 1$ and $\langle A\zeta, b \rangle =0$ for every $\zeta$ in $\CC^n$ with $|A\zeta| = |\zeta|$.\
\item[(b)] The operator $C_{\varphi}: \calF^p(\CC^n) \to \calF^q(\CC^n)$ is compact if and only if $\varphi(z) = Az + b$, where $A$ is an $n \times n$ matrix with $\|A\| < 1$ and  $b$ is an $n \times 1$ vector.
\end{itemize}
\end{thm}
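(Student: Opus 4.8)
The plan is to handle necessity and sufficiency separately, obtaining the necessary conditions by testing $C_{\varphi}$ on the normalized kernels $k_w$, and the sufficient ones by reducing, via a singular value decomposition, to a diagonal affine symbol. Suppose first that $C_{\varphi}\colon\calF^p(\CC^n)\to\calF^q(\CC^n)$ is bounded. Since $\|k_w\|_{n,p}=1$, the function $C_{\varphi}k_w(z)=e^{\langle\varphi(z),w\rangle-|w|^2/2}$ has $\calF^q$-norm at most $\|C_{\varphi}\|$, so Lemma \ref{lem-F1}(i) gives $\re\langle\varphi(z),w\rangle-\tfrac12|w|^2-\tfrac12|z|^2\le\log\|C_{\varphi}\|$ for all $z,w\in\CC^n$; taking $w=\varphi(z)$ yields $|\varphi(z)|^2-|z|^2\le2\log\|C_{\varphi}\|$, i.e.\ $m(1,\varphi)\le\|C_{\varphi}\|<\infty$. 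In particular each coordinate function $\varphi_j$ is entire of at most linear growth, hence affine (restrict to complex lines and apply the Cauchy estimates), so $\varphi(z)=Az+b$. Feeding this back and replacing $z$ by $tz$ with $t\to\infty$ forces $|Az|\le|z|$, that is $\|A\|\le1$; on the subspace $E=\{\zeta:|A\zeta|=|\zeta|\}$ the quadratic terms cancel, leaving $2t\,\re\langle A\zeta,b\rangle+|b|^2$ bounded in $t>0$, and replacing $\zeta$ by $e^{i\theta}\zeta$ upgrades this to $\langle A\zeta,b\rangle=0$ for $\zeta\in E$. This is the forward implication of (a).

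For the forward implication of (b), assume $C_{\varphi}$ is compact; then $\varphi(z)=Az+b$ with $\|A\|\le1$ as above, and a direct Gaussian integration gives $\|C_{\varphi}k_w\|_{n,q}=\exp\bigl(\tfrac12(|A^*w|^2-|w|^2)+\re\langle b,w\rangle\bigr)$. On the subspace $F=\{w:|A^*w|=|w|\}$ the first exponent vanishes, and since $A$ and $A^*$ share the same singular values one checks $F=\{A\zeta:\zeta\in E\}$, so the orthogonality relation just obtained says precisely that $\langle b,w\rangle=0$ for $w\in F$; hence $\|C_{\varphi}k_w\|_{n,q}\equiv1$ on $F$. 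If $\|A\|=1$ then $F\neq\{0\}$, and along $w=tw_0$ with $0\neq w_0\in F$ and $t\to\infty$ we would have $\|k_w\|_{n,p}=1$, $k_w\to0$ in $\calO(\CC^n)$, but $\|C_{\varphi}k_w\|_{n,q}\not\to0$, contradicting compactness. Thus $\|A\|<1$.

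For sufficiency, write a singular value decomposition $A=U\Sigma V^*$ with $U,V$ unitary and $\Sigma=\operatorname{diag}(\sigma_1,\dots,\sigma_n)$, $\sigma_i\in[0,1]$. The composition operators induced by the unitary changes of variables $z\mapsto Uz$ and $z\mapsto V^*z$ are surjective isometries of every $\calF^r(\CC^n)$, and they conjugate $C_{\varphi}$ to $C_{\psi}$ with $\psi(z)=\Sigma z+d$, $d=U^*b$; moreover the hypothesis $\langle A\zeta,b\rangle=0$ on $E$ translates into $d_i=0$ whenever $\sigma_i=1$. It therefore suffices to bound $C_{\psi}$, and Fubini's theorem reduces this coordinatewise to the one-variable symbols $\zeta\mapsto\sigma_i\zeta+d_i$: when $\sigma_i=1$ this is the identity, and when $\sigma_i<1$ the substitution $u=\sigma_i\zeta+d_i$ reduces boundedness on $\calF^q(\CC)$ to boundedness of the resulting weight ratio, whose exponent $\tfrac q2(|u|^2-\sigma_i^{-2}|u-d_i|^2)$ has negative leading coefficient and hence is bounded above. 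Multiplying the coordinate bounds shows $C_{\psi}$ is bounded on $\calF^q(\CC^n)$, and Lemma \ref{lem-Fpq} then yields boundedness of $C_{\varphi}\colon\calF^p(\CC^n)\to\calF^q(\CC^n)$, proving the converse in (a). For the converse in (b), if $\|A\|<1$ the orthogonality condition is vacuous, so $C_{\varphi}$ is bounded; to see it is compact it suffices to check that $\|C_{\varphi}f_k\|_{n,q}\to0$ whenever $\|f_k\|_{n,p}\le1$ and $f_k\to0$ in $\calO(\CC^n)$, and this follows from dominated convergence since, by Lemma \ref{lem-F1}(i), $|f_k(Az+b)|^qe^{-q|z|^2/2}\le e^{\frac q2(|Az+b|^2-|z|^2)}$, a function in $L^1(\CC^n)$ because $|Az+b|^2-|z|^2\le-(1-\|A\|^2)|z|^2+2\|A\|\,|b|\,|z|+|b|^2\to-\infty$.

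I expect the main obstacle to be the sufficiency half of (a): the crude pointwise majorant that settles compactness diverges exactly when $\|A\|=1$, so one genuinely must pass through the singular value reduction and exploit the precise function of the orthogonality condition — namely that it forces the translation parameter to vanish in the norm-one directions — carrying out the unitary conjugation and the coordinatewise estimate by Fubini rather than by the Hilbert-space tensor arguments that are unavailable for $p\neq2$.
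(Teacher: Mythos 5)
The paper does not actually prove Theorem \ref{thm-co}: it is imported verbatim from the companion paper \cite{TK-17-1} (Corollaries 3.11 and 3.14 there), so there is no in-text argument to compare yours against line by line. On its own terms your proposal is correct, and it is very much in the spirit of the machinery the present paper does use: your conjugation by the unitary composition operators $C_U$, $C_V$ to reduce to a diagonal symbol is exactly the ``normalization'' $\widetilde{\varphi}$ of Section 2 (cf.\ \eqref{eq-w-nor}), your identity $d_i=0$ when $\sigma_i=1$ is the composition-operator case of Lemma \ref{lem-psiphi}, and the kernel-testing lower bounds for necessity are the same device used in Proposition \ref{prop-est-norm}. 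The genuinely nontrivial half --- boundedness when $\|A\|=1$ --- is handled correctly: the coordinatewise Fubini reduction is legitimate because the partial substitution operators $z\mapsto(z_1,\dots,\sigma_i z_i+d_i,\dots,z_n)$ commute, each has $\calF^q\to\calF^q$ norm controlled by the corresponding one-variable operator via Lemma \ref{lem-F}, and their composite is $C_\psi$; Lemma \ref{lem-Fpq} then upgrades this to $\calF^p\to\calF^q$. Two small points to tighten: (i) the change of variables $u=\sigma_i\zeta+d_i$ degenerates when $\sigma_i=0$, where you should instead bound $|g(d_i)|\,\|1\|_{1,q}\le e^{|d_i|^2/2}\|g\|_{1,q}$ directly from Lemma \ref{lem-F1}(i); (ii) both directions of your compactness argument use the criterion that compactness is equivalent to sending norm-bounded, compactly convergent-to-zero sequences to norm-null sequences, which is standard for Fock spaces (including $0<p<1$, via Montel plus Lemma \ref{lem-F1}(i)) but should be stated or cited explicitly rather than assumed.
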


\begin{thm}\label{thm-co-1}
Let $0< q < p < \infty$ and $\varphi: \CC^n \to \CC^n$ a holomorphic mapping. The following conditions are equivalent:
\begin{itemize}
\item[(i)] The operator $C_{\varphi}: \calF^p(\CC^n) \to \calF^q(\CC^n)$ is bounded.
\item[(ii)] The operator $C_{\varphi}: \calF^p(\CC^n) \to \calF^q(\CC^n)$ is compact.
\item[(iii)] $\varphi(z) = Az + b$, where $A$ is an $n \times n$ matrix with $\|A\| < 1$ and $b$ is an $n \times 1$ vector.
\end{itemize}
\end{thm}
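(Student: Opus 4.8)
The plan is to prove the cycle $(ii)\Rightarrow(i)\Rightarrow(iii)\Rightarrow(ii)$. The implication $(ii)\Rightarrow(i)$ is immediate since compact operators are bounded, so all the content is in the other two.

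For $(i)\Rightarrow(iii)$, I would first recover the matrix structure from results already in the excerpt: since $q<p$, Lemma \ref{lem-Fpq} (with $p$ and $q$ exchanged) gives a bounded inclusion $\calF^q(\CC^n)\hookrightarrow\calF^p(\CC^n)$, hence $C_\varphi$ is also bounded from $\calF^p(\CC^n)$ to $\calF^p(\CC^n)$, and Theorem \ref{thm-co}(a) yields $\varphi(z)=Az+b$ with $\|A\|\le1$. The substance is then to rule out $\|A\|=1$. Suppose $\|A\|=1$ and pick a unit vector $\zeta_0$ with $|A\zeta_0|=1$; since $I-A^*A\ge0$ and $\langle(I-A^*A)\zeta_0,\zeta_0\rangle=|\zeta_0|^2-|A\zeta_0|^2=0$, we get $(I-A^*A)\zeta_0=0$, so with $\eta_0:=A\zeta_0$ (also a unit vector) one has $A^*\eta_0=\zeta_0$. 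I would then test $C_\varphi$ on $f_k(z)=\langle z,\eta_0\rangle^k$. A unitary change of variables (isometric on every $\calF^r(\CC^n)$) and integration of the remaining variables against the Gaussian give $\|f_k\|_{n,p}=\|z^k\|_{1,p}$; since $f_k\circ\varphi(z)=(\langle z,A^*\eta_0\rangle+\langle b,\eta_0\rangle)^k=(\langle z,\zeta_0\rangle+\beta)^k$ with $\beta=\langle b,\eta_0\rangle$ a constant, the same device gives $\|f_k\circ\varphi\|_{n,q}=\|(z+\beta)^k\|_{1,q}$. A change of variables $z\mapsto z+\beta$ and retaining only the half-plane where the resulting exponential weight is $\ge1$ give $\|(z+\beta)^k\|_{1,q}\ge 2^{-1/q}e^{-|\beta|^2/2}\|z^k\|_{1,q}$, while the explicit value $\|z^k\|_{1,r}^r=(2/r)^{kr/2}\Gamma(1+kr/2)$ together with Stirling's formula gives $\|z^k\|_{1,q}/\|z^k\|_{1,p}\to\infty$ (the dominant factor $(k/e)^{k/2}$ is the same for every exponent, and only the polynomial correction $k^{1/(2r)}$ survives, which blows up since $q<p$). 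Hence $\|C_\varphi\|\ge\|f_k\circ\varphi\|_{n,q}/\|f_k\|_{n,p}\to\infty$, contradicting boundedness; therefore $\|A\|<1$.

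For $(iii)\Rightarrow(ii)$, write $\varphi(z)=Az+b$ with $\|A\|<1$, and use the standard criterion that $C_\varphi$ is compact once it sends every sequence $(g_j)\subset\calF^p(\CC^n)$ with $\|g_j\|_{n,p}\le M$ and $g_j\to0$ uniformly on compacta to a sequence with $\|C_\varphi g_j\|_{n,q}\to0$ (Lemma \ref{lem-F1}(i) makes bounded subsets of $\calF^p(\CC^n)$ normal families, and a Fatou argument identifies the limits). For such a sequence, split $\|C_\varphi g_j\|_{n,q}^q=(q/2\pi)^n\int_{\CC^n}|g_j(Az+b)|^qe^{-q|z|^2/2}\,dA(z)$ over $\{|z|\le R\}$ and $\{|z|>R\}$: on the ball, $Az+b$ stays in a fixed compact set, so that part tends to $0$ as $j\to\infty$ for each fixed $R$; on the complement, Lemma \ref{lem-F1}(i) gives $|g_j(Az+b)|^qe^{-q|z|^2/2}\le M^qe^{q(|Az+b|^2-|z|^2)/2}$, and since $\|A\|<1$ the exponent is at most $\tfrac{q}{2}\big((\|A\|^2-1)|z|^2+2\|A\|\,|b|\,|z|+|b|^2\big)$, whose exponential is integrable over $\CC^n$, so the tail tends to $0$ as $R\to\infty$ uniformly in $j$. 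Choosing $R$ first and then $j$ yields $\|C_\varphi g_j\|_{n,q}\to0$, hence $C_\varphi$ is compact (and taking $R=0$ in the tail bound re-proves its boundedness).

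I expect the main obstacle to be the step $\|A\|=1\Rightarrow$ unboundedness: the reproducing kernels $k_w$ do not suffice here, since they can stay bounded under $C_\varphi$ exactly when the relevant coordinate of $b$ vanishes---the borderline case that is in fact bounded when $p\le q$. One is therefore forced to use polynomial test functions and to extract the strict gap $q<p$ from the precise growth of the monomial norms $\|z^k\|_{1,r}$. The rest---the singular-value identity $A^*\eta_0=\zeta_0$, unitary invariance of the Fock norms, and the Gaussian-decay estimate in $(iii)\Rightarrow(ii)$---should be routine.
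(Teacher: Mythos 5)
Your proposal is correct, but note that the paper does not actually prove Theorem \ref{thm-co-1}: it imports it from the companion paper \cite{TK-17-1} (Corollaries 3.11 and 3.14), where the case $0<q<p<\infty$ is handled through the general characterization of boundedness of $W_{\psi,\varphi}$ by the membership of $z_{[s]}\mapsto \ell_{z_{[s]}}(\widetilde{\psi},\widetilde{\varphi})$ in $L^{\frac{pq}{p-q}}(\CC^s,dA)$ --- exactly the mechanism this paper reuses in the proof of Theorem \ref{thm-tp-qp} to show that $\|A\|=1$ is impossible when $q<p$. Your argument is therefore a genuinely different, self-contained route. The reduction to $\varphi(z)=Az+b$ with $\|A\|\le 1$ via the continuous embedding $\calF^q(\CC^n)\subset\calF^p(\CC^n)$ and Theorem \ref{thm-co}(a) is clean, and the exclusion of $\|A\|=1$ by testing on $f_k(z)=\langle z,\eta_0\rangle^k$ checks out in every detail: the identity $A^*\eta_0=\zeta_0$ from positive semidefiniteness of $I-A^*A$, the reduction to one variable by unitary invariance of the Fock norms, the half-plane bound $\|(z+\beta)^k\|_{1,q}\ge 2^{-1/q}e^{-|\beta|^2/2}\|z^k\|_{1,q}$, and the asymptotics $\|z^k\|_{1,r}\sim (k/e)^{k/2}(\pi k r)^{1/(2r)}$, whose ratio leaves the factor $k^{\frac{1}{2q}-\frac{1}{2p}}\to\infty$ precisely because $q<p$. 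Your remark that the kernels $k_w$ cannot detect this failure is also accurate, and is presumably why \cite{TK-17-1} works with integrability of $m_z$ rather than with pointwise lower bounds. The trade-off is clear: the $L^{\frac{pq}{p-q}}$ criterion of \cite{TK-17-1} covers all weights $\psi$ at once, while your monomial test gives an elementary, quantitative proof for the unweighted case that makes visible exactly where the strict inequality $q<p$ enters. The implication (iii)$\Rightarrow$(ii) via normal families and the Gaussian tail estimate is the standard argument and is fine.
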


The criteria for boundedness and compactness of weighted composition operators $W_{\psi,\varphi}: \calF^p(\CC^n) \to \calF^q(\CC^n)$ were obtained in terms of the so-called \textit{normalization} $(\widetilde{\psi}, \widetilde{\varphi})$ of $(\psi,\varphi)$ (see \cite[Theorems 3.8, 3.9 and 3.12]{TK-17-1}). The normalization $(\widetilde{\psi}, \widetilde{\varphi})$ also plays an important role in the current paper. We recall this notation based on the following singular value decomposition of the matrix $A$ (see also \cite[Theorem 2.6.3]{HR-90}).

\begin{lem}
If $A$ is an $n \times n$ matrix of rank $s$, then $A$ can be written as $A = V \widetilde{A} U$, where $V, U$ are $n \times  n$ unitary matrices, and $\widetilde{A}$ is a diagonal matrix $(\widetilde{a}_{ij})$ with $\widetilde{a}_{11} \geq \widetilde{a}_{22} \geq ... \geq \widetilde{a}_{ss} \geq \widetilde{a}_{s+1, s+1} = ... = \widetilde{a}_{nn} = 0$. The $\widetilde{a}_{ii}$ are the non-negative square roots of the eigenvalues of $AA^*$; if we require that they are listed in decreasing order, then $\widetilde{A}$ is uniquely determined from $A$.
\end{lem}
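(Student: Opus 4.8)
The plan is to obtain the decomposition as the standard singular value decomposition, produced by spectrally diagonalizing the positive semi-definite Hermitian matrix $AA^*$. First I would apply the spectral theorem to $AA^*$: there is an $n\times n$ unitary matrix $V$ whose columns $v_1,\dots,v_n$ form an orthonormal basis of eigenvectors, say $AA^* v_i = \lambda_i v_i$ with $\lambda_1 \ge \dots \ge \lambda_n \ge 0$. Since $\operatorname{rank}(AA^*) = \operatorname{rank}(A) = s$, exactly $s$ of the $\lambda_i$ are nonzero, so $\lambda_1 \ge \dots \ge \lambda_s > 0 = \lambda_{s+1} = \dots = \lambda_n$; set $\widetilde A = \operatorname{diag}\bigl(\sqrt{\lambda_1},\dots,\sqrt{\lambda_n}\bigr)$, which already has the required non-increasing diagonal, and note that $\widetilde a_{ii} = \sqrt{\lambda_i}$ is the non-negative square root of the $i$-th eigenvalue of $AA^*$.

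Next I would build $U$. For $1 \le i \le s$ put $u_i = \lambda_i^{-1/2} A^* v_i$; using $\langle A^* v_i, A^* v_j\rangle = \langle AA^* v_i, v_j\rangle = \lambda_i\langle v_i,v_j\rangle$ one checks that $u_1,\dots,u_s$ are orthonormal, and each lies in the range of $A^*$, i.e.\ in $(\ker A)^\perp$, a subspace of dimension $s$; hence $u_1,\dots,u_s$ is an orthonormal basis of $(\ker A)^\perp$. Extend it by an orthonormal basis $u_{s+1},\dots,u_n$ of $\ker A$ to an orthonormal basis $u_1,\dots,u_n$ of $\CC^n$, and let $U$ be the $n\times n$ unitary matrix whose $i$-th row is $u_i^*$ (equivalently $U^* = [\,u_1\mid\cdots\mid u_n\,]$). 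Then for $i \le s$ we have $A u_i = \lambda_i^{-1/2} AA^* v_i = \sqrt{\lambda_i}\,v_i$, while for $i > s$ we have $Au_i = 0 = \sqrt{\lambda_i}\,v_i$; reading these identities column by column gives $AU^* = V\widetilde A$, and multiplying on the right by $U$ yields $A = V\widetilde A U$.

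For the uniqueness of $\widetilde A$: if $A = V\widetilde A U$ with $V,U$ unitary and $\widetilde A$ diagonal with non-negative, non-increasing entries, then $AA^* = V\widetilde A U U^* \widetilde A^* V^* = V\widetilde A^2 V^*$, so the diagonal entries of $\widetilde A^2$ are exactly the eigenvalues of $AA^*$ counted with multiplicity; requiring them non-increasing fixes this list, hence fixes $\widetilde A$ as its entrywise non-negative square root. The only point needing a moment's care is that the extension $u_{s+1},\dots,u_n$ can be taken inside $\ker A$, but this is forced by $\dim \ker A = n-s$ together with $\operatorname{span}\{u_1,\dots,u_s\} = (\ker A)^\perp$, so there is no genuine obstacle; the argument is essentially linear-algebra bookkeeping, and one may alternatively just quote \cite[Theorem 2.6.3]{HR-90}.
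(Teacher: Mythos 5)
Your proof is correct. Note that the paper does not actually prove this lemma at all: it is stated as a known fact with a pointer to \cite[Theorem 2.6.3]{HR-90}, so there is no in-paper argument to compare against. What you supply is the standard textbook derivation — spectral decomposition of the positive semi-definite matrix $AA^*$ to get $V$ and the singular values, construction of the right unitary factor from $u_i=\lambda_i^{-1/2}A^*v_i$ together with a basis of $\ker A$, and uniqueness of $\widetilde A$ from $AA^*=V\widetilde A^2V^*$ — and all the steps (orthonormality of the $u_i$, the identification $\mathrm{range}(A^*)=(\ker A)^\perp$, the column-by-column verification of $AU^*=V\widetilde A$) check out. The only substantive content in the lemma beyond the bare existence of an SVD is the decreasing-order normalization and the resulting uniqueness of $\widetilde A$, and your last paragraph handles that correctly.
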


Let $\calW_q$ be the set of all pairs $(\psi,\varphi)$ consisting of a nonzero entire function $\psi$ in $\calF^q(\CC^n)$ and a mapping $\varphi(z) = Az + b$ with an $n \times n$ matrix $A$ satisfying $\|A\| \leq 1$ and an $n \times 1$ vector $b$. 

We denote by $\mathcal V_{q,s}$ the subset of $\calW_q$ consisting of all pairs $(\psi,\varphi)$ in $\calW_q$ with $\varphi(z) = Az + b$, where $A$ is a diagonal matrix $(a_{ij})$ of $\textnormal{rank} A = s > 0$ and 
$$
1 \geq a_{11} \geq a_{22} \geq ... \geq a_{ss} \geq a_{s+1, s+1} = ... = a_{nn} = 0.
$$
Note that for each $(\psi,\varphi)$ in $\mathcal V_{q,s}$ and $f \in \calO(\CC^n)$, we have
\begin{align}\label{eq-normf}
\|W_{\psi,\varphi}f\|_{n, q} =& \left( \left( \dfrac{q}{2\pi} \right)^n \int_{\CC^n} |\psi(z)|^q |f(\varphi(z))|^q e^{-\frac{q|z|^2}{2}}dA(z)\right)^{\frac{1}{q}}\\ \nonumber
=& \left( \left( \dfrac{q}{2\pi} \right)^s \int_{\CC^s}  |f(\varphi(z))|^q e^{-\frac{q\left|z_{[s]}\right|^2}{2}} \|\psi(z_{[s]},\cdot)\|^q_{n-s,q} dA(z_{[s]})\right)^{\frac{1}{q}}.
\end{align}
In view of this, boundedness and compactness of weighted composition operators $W_{\psi,\varphi}$ induced by a pair $(\psi,\varphi)$ in $\mathcal V_{q,s}$, are characterized by the following quantities:
\begin{align*}
\ell_{z_{[s]}}(\psi,\varphi) = e^{\frac{|\varphi(z)|^2 - \left|z_{[s]}\right|^2}{2}} \|\psi(z_{[s]},.)\|_{n-s, q}, \ \ z_{[s]} \in \CC^s,
\end{align*}
and
$$
\ell(\psi,\varphi) = \sup_{z_{[s]} \in \CC^s} \ell_{z_{[s]}}(\psi, \varphi),
$$
where we consider $\|\psi(z_{[s]},.)\|_{n-s, q} = |\psi(z)|$ if $s = n$, and in this case $\ell_z(\psi,\varphi) = m_z(\psi,\varphi)$.

For each pair $(\psi, \varphi)$ in $\calW_q$ with $\varphi(z) = Az + b$ and $\textnormal{rank} A = s > 0$ and the singular value decomposition $V \widetilde{A} U$ of $A$, we define a new pair $(\widetilde{\psi}, \widetilde{\varphi})$ as follows:
$$
\widetilde{\psi}(z) = \psi(U^*z) \text{ and } \widetilde{\varphi}(z) = \widetilde{A} z + \widetilde{b},\ \widetilde{b} = V^*b, \ z \in \CC^n.
$$
We call $(\widetilde{\psi}, \widetilde{\varphi})$ a \textit{normalization} of $(\psi, \varphi)$ with respect to the singular value decomposition $A = V \widetilde{A} U$ (briefly, \textit{normalization} of $(\psi, \varphi)$). In the case $\psi$ is identically $1$, $\widetilde{\varphi}$ is also called a \textit{normalization} of $\varphi$.
Note that $(\widetilde{\psi}, \widetilde{\varphi})$ belongs to $\mathcal V_{q, s}$.

We give the following auxiliary lemma which will be used in the sequel.

\begin{lem}\label{lem-psiphi}
Let $(\psi,\varphi)$ be a pair in $\calW_q$ with $\varphi(z) = Az + b$ and $(\widetilde{\psi}, \widetilde{\varphi})$ its normalization. If $m(\psi,\varphi) < \infty$ and $\|A\| = 1$, then
$$
\widetilde{\psi}(z) = e^{-\left\langle z_{[j]}, \widetilde{b}_{[j]}\right\rangle} \widetilde{\psi}_*(z'_{[j]}), \ z \in \mathbb C^n,
$$
where $j = \max\{i: \widetilde{a}_{ii} = 1\}$ and $\widetilde{\psi}_*$ is a nonzero entire function of $z'_{[j]}$ on $\mathbb C^{n-j}$.
In particular, if $\psi$ is identically $1$, then $\widetilde{b}_{i} = 0$ for every $i \leq j$.
\end{lem}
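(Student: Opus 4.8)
The plan is to reduce to the normalized pair, exploit an exact cancellation, and then invoke Liouville's theorem in the variables $z_{[j]}$.

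First I would record that $m$ is unchanged by normalization. With $A=V\widetilde A U$ the singular value decomposition, one has $V\widetilde A=AU^{*}$, so for every $z$, $|\widetilde\varphi(z)|=|\widetilde A z+V^{*}b|=|V\widetilde A z+b|=|\varphi(U^{*}z)|$ while $|z|=|U^{*}z|$; hence $m_{z}(\widetilde\psi,\widetilde\varphi)=m_{U^{*}z}(\psi,\varphi)$, and taking suprema, $m(\widetilde\psi,\widetilde\varphi)=m(\psi,\varphi)<\infty$. Since $(\widetilde\psi,\widetilde\varphi)\in\mathcal V_{q,s}$ and a pair already lying in $\mathcal V_{q,s}$ is its own normalization, it suffices to prove the following: \emph{if $(\psi,\varphi)\in\mathcal V_{q,s}$ satisfies $\|A\|=1$ and $m(\psi,\varphi)<\infty$, then $\psi(z)=e^{-\langle z_{[j]},b_{[j]}\rangle}\psi_{*}(z'_{[j]})$ for some nonzero entire function $\psi_{*}$ on $\mathbb C^{n-j}$, where $j=\max\{i:a_{ii}=1\}$} (such a $j$ exists since $\|A\|=1$ forces $a_{11}=1$).

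So assume $A$ is diagonal with $1=a_{11}=\cdots=a_{jj}>a_{j+1,j+1}\ge\cdots\ge a_{nn}\ge 0$. A coordinatewise expansion gives
$$|\varphi(z)|^{2}-|z|^{2}=\sum_{i=1}^{j}\bigl(2\re(z_{i}\overline{b_{i}})+|b_{i}|^{2}\bigr)+\sum_{i=j+1}^{n}\bigl((a_{ii}^{2}-1)|z_{i}|^{2}+2a_{ii}\re(z_{i}\overline{b_{i}})+|b_{i}|^{2}\bigr).$$
The key point is that setting $g(z)=e^{\langle z_{[j]},b_{[j]}\rangle}\psi(z)$ — an entire function on $\mathbb C^{n}$, nonzero since $\psi\not\equiv0$ — makes the factor $e^{-2\re\langle z_{[j]},b_{[j]}\rangle}$ in $|\psi(z)|^{2}=e^{-2\re\langle z_{[j]},b_{[j]}\rangle}|g(z)|^{2}$ cancel the terms $\sum_{i=1}^{j}2\re(z_{i}\overline{b_{i}})$ above, so that
$$m_{z}(\psi,\varphi)^{2}=|g(z)|^{2}\exp\Bigl[\sum_{i=1}^{n}|b_{i}|^{2}+\sum_{i=j+1}^{n}\bigl((a_{ii}^{2}-1)|z_{i}|^{2}+2a_{ii}\re(z_{i}\overline{b_{i}})\bigr)\Bigr].$$
All of the $z_{[j]}$-dependence of $m_{z}(\psi,\varphi)$ now lies inside $|g(z)|$. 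To finish, fix $z'_{[j]}\in\mathbb C^{n-j}$; the exponential factor above becomes a fixed positive constant $c(z'_{[j]})$, so $|g(z_{[j]},z'_{[j]})|^{2}\le m(\psi,\varphi)^{2}/c(z'_{[j]})<\infty$ for every $z_{[j]}\in\mathbb C^{j}$. A bounded entire function on $\mathbb C^{j}$ is constant, so $g(\cdot,z'_{[j]})$ is constant; as $z'_{[j]}$ was arbitrary, $g$ depends only on $z'_{[j]}$, say $g(z)=\psi_{*}(z'_{[j]})$ with $\psi_{*}$ entire and nonzero on $\mathbb C^{n-j}$ (when $j=n$, read $z'_{[n]}=\emptyset$ and $\psi_{*}$ a nonzero constant). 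Then $\psi(z)=e^{-\langle z_{[j]},b_{[j]}\rangle}\psi_{*}(z'_{[j]})$, and translating back through the normalization yields the stated formula for $\widetilde\psi$. For the last assertion, if $\psi\equiv1$ then $\widetilde\psi\equiv1$, so $e^{-\re\langle z_{[j]},\widetilde b_{[j]}\rangle}|\widetilde\psi_{*}(z'_{[j]})|\equiv1$; fixing $z'_{[j]}$ and letting $z_{[j]}=t\,\widetilde b_{[j]}$, $t\in\RR$, forces $e^{-t|\widetilde b_{[j]}|^{2}}$ to be constant in $t$, hence $\widetilde b_{i}=0$ for all $i\le j$.

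I do not expect a genuine obstacle here: the whole argument is the cancellation identity followed by Liouville in the variables $z_{[j]}$. The only care needed is bookkeeping — the unitary invariance of $m$ used to reduce to diagonal $A$, the coordinatewise expansion of $|\varphi(z)|^{2}-|z|^{2}$, and checking that $g$ really is entire (so Liouville applies) and that $\psi_{*}$ inherits being entire and nonzero.
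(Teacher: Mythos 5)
Your proof is correct and follows essentially the same route as the paper's: reduce to the normalized pair via the invariance $m(\widetilde{\psi},\widetilde{\varphi})=m(\psi,\varphi)$, observe that the cross terms in $|\widetilde{\varphi}(z)|^2-|z|^2$ are exactly cancelled by the factor $e^{\langle z_{[j]},\widetilde{b}_{[j]}\rangle}$ so that $\widetilde{\psi}(\cdot,z'_{[j]})e^{\langle \cdot,\widetilde{b}_{[j]}\rangle}$ is bounded on $\CC^j$, and conclude by Liouville. The only differences are cosmetic: you verify the $m$-invariance directly where the paper cites \cite[Lemma 3.5]{TK-17-1}, and you spell out the ``in particular'' clause, which the paper leaves to the reader.
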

\begin{proof}
By \cite[Lemma 3.5]{TK-17-1}, we have $m(\widetilde{\psi}, \widetilde{\varphi}) = m(\psi,\varphi) < \infty$. Then, for each $z \in \mathbb C^n$, 
\begin{align*}
\infty > m(\widetilde{\psi}, \widetilde{\varphi}) &\geq |\widetilde{\psi}(z)|e^{\frac{|\widetilde{\varphi}(z)|^2 - |z|^2}{2}} \\
& = \left|\widetilde{\psi}(z) e^{\left\langle z_{[j]}, \widetilde{b}_{[j]} \right\rangle}  \right|e^{\frac{\left|\widetilde{b}_{[j]}\right|^2}{2}}e^{\frac{\left|\widetilde{A}'_{[j]}z'_{[j]} + \widetilde{b}'_{[j]}\right|^2 - \left|z'_{[j]}\right|^2}{2}}.
\end{align*}
Thus, for each $z'_{[j]} \in \mathbb C^{n-j}$ fixed, the entire function $\widetilde{\psi}(z_{[j]}, z'_{[j]}) e^{\left\langle z_{[j]}, \widetilde{b}_{[j]} \right\rangle}$ is bounded on $\mathbb C^j$, and hence, $\widetilde{\psi}(z_{[j]}, z'_{[j]}) e^{\left\langle z_{[j]}, \widetilde{b}_{[j]} \right\rangle} = \widetilde{\psi}_*(z'_{[j]})$ for every $z \in \mathbb C^n$.
From this the conclusions follow.
\end{proof}

Next, for simplicity, if $\varphi(z) \equiv b$, then we write $C_b$ instead of $C_{\varphi}$; and 
in the case $\varphi(z) = Az$ with an $n \times n$ matrix $A$, we denote the composition operator $C_{\varphi}$ by $C_A$.
Obviously, if $U$ is a unitary matrix, then $C_U$ is invertible on every Fock space $\calF^p(\CC^n)$ with $(C_U)^{-1} = C_{U^*}$ and $\|C_Uf\|_{n, p} = \|f\|_{n, p}$ for all $f \in \calF^p(\CC^n)$. From this and the definition of $(\widetilde{\psi}, \widetilde{\varphi})$ and \cite[Proposition 3.4]{TK-17-1} it follows that
\begin{equation}\label{eq-w-nor}
W_{\psi,\varphi} = C_U W_{\widetilde{\psi}, \widetilde{\varphi}} C_V \text{ and } W_{\widetilde{\psi}, \widetilde{\varphi}} = C_{U^*} W_{\psi,\varphi} C_{V^*}.
 \end{equation}

\section{The space of composition operators}

In this section we study path connected components in the space $\calC(\calF^p(\mathbb C^n), \calF^q(\mathbb C^n))$ of all composition operators acting from $\calF^p(\mathbb C^n)$ to $\calF^q(\mathbb C^n)$ under the operator norm topology. For two operators $C_{\varphi}$ and $C_{\phi}$ in $\calC(\calF^p(\mathbb C^n), \calF^q(\mathbb C^n))$, we  write $C_{\varphi} \sim C_{\phi}$ in $\calC(\calF^p(\mathbb C^n), \calF^q(\mathbb C^n))$ if $C_{\varphi}$ and $C_{\phi}$ belong to the same path connected component of the space $\calC(\calF^p(\mathbb C^n), \calF^q(\mathbb C^n))$.

Firstly we investigate the set of all compact composition operators from $\calF^p(\mathbb C^n)$ to $\calF^q(\mathbb C^n)$, denoted by $\calC_0(\calF^p(\mathbb C^n), \calF^q(\mathbb C^n))$. By Theorems \ref{thm-co} and \ref{thm-co-1}, we get 
$$
\calC_0(\calF^p(\mathbb C^n), \calF^q(\mathbb C^n)) = \left\{C_{\varphi}: \varphi(z) = Az + b, \|A\| < 1, b \in \mathbb C^n \right\}.
$$

\begin{prop}\label{prop-tp-com}
Let $p, q \in (0,\infty)$ be given. The set $\calC_0(\calF^p(\mathbb C^n), \calF^q(\mathbb C^n))$ of all compact composition operators acting from $\calF^p(\mathbb C^n)$ to $\calF^q(\mathbb C^n)$ is path connected in the space $\calC(\calF^p(\mathbb C^n),\calF^q(\mathbb C^n))$.
\end{prop}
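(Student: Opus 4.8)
The plan is to show that every compact composition operator $C_\varphi$ with $\varphi(z) = Az + b$, $\|A\| < 1$, can be joined by a norm-continuous path inside $\calC_0(\calF^p(\CC^n),\calF^q(\CC^n))$ to a fixed reference operator, say the operator $C_0$ induced by the constant map $\varphi \equiv 0$ (equivalently, the rank-one operator $f \mapsto f(0)$). Since path-connectedness is transitive, it suffices to produce, for each such $C_\varphi$, a path from $C_\varphi$ to $C_0$; concatenating with the reverse path of another operator then links any two compact operators.

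The natural candidate is the linear homotopy $\varphi_t(z) = t(Az + b) = (tA)z + tb$ for $t \in [0,1]$, so that $\varphi_1 = \varphi$ and $\varphi_0 \equiv 0$. For each $t$ we have $\|tA\| = t\|A\| \le \|A\| < 1$, so by Theorem \ref{thm-co}(b) (if $0<p\le q$) or Theorem \ref{thm-co-1} (if $0<q<p$) each $C_{\varphi_t}$ is indeed a compact — in particular bounded — composition operator from $\calF^p(\CC^n)$ to $\calF^q(\CC^n)$; thus the path stays inside $\calC_0(\calF^p(\CC^n),\calF^q(\CC^n))$. It remains to check that $t \mapsto C_{\varphi_t}$ is continuous in the operator norm. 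The key estimate is to bound $\|C_{\varphi_t} - C_{\varphi_{t'}}\|$ for $t, t'$ close. For a test function $f$ with $\|f\|_{n,p} \le 1$ one writes, for fixed $z$,
$$
|f(\varphi_t(z)) - f(\varphi_{t'}(z))| \le |t - t'|\,\sup_{\lambda \in [t,t']} \big| \tfrac{d}{d\lambda} f(\varphi_\lambda(z)) \big|
= |t-t'|\,\sup_{\lambda}\Big| \sum_{i=1}^n \tfrac{\partial f}{\partial z_i}(\varphi_\lambda(z))\,(Az+b)_i \Big|,
$$
and then applies Lemma \ref{lem-F1}(ii) to control each $\partial f/\partial z_i$ by $e^2(1+|(\varphi_\lambda(z))_i|)e^{|\varphi_\lambda(z)|^2/2}\|f\|_{n,p}$. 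Since $|\varphi_\lambda(z)| \le \lambda(\|A\||z| + |b|) \le \|A\||z| + |b|$ with $\|A\| < 1$, the Gaussian weight $e^{-q|z|^2/2}$ in $\|\cdot\|_{n,q}$ dominates $e^{q|\varphi_\lambda(z)|^2/2}$ up to a fixed polynomial-times-exponential factor that is integrable over $\CC^n$. Integrating the resulting pointwise bound (and, when $0<q<p$, first passing through Lemma \ref{lem-Fpq} to replace $\|f\|_{n,q}$-type quantities by $\|f\|_{n,p}$) yields $\|C_{\varphi_t} - C_{\varphi_{t'}}\| \le L|t-t'|$ for a constant $L$ depending only on $\|A\|, |b|, n, p, q$; hence the path is (Lipschitz) continuous.

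I expect the main obstacle to be the norm-continuity estimate, specifically handling the exponential factor $e^{q|\varphi_\lambda(z)|^2/2}$ against the Gaussian weight uniformly in $\lambda$: one must be careful that the implied constants stay bounded as $\lambda$ ranges over $[0,1]$, which is exactly where $\|A\| < 1$ (strict inequality, giving compactness) is essential — the same argument would fail on the boundary $\|A\| = 1$. A secondary technical point is the case $0 < p < 1$, where $\calF^p$ is only a metric space; there the operator "norm" should be interpreted via the quasi-norm / the metric $d(f,g) = \|f-g\|_{n,p}^p$, and the estimate above is simply raised to the power $p$ (or $q$), so the Lipschitz bound becomes Hölder-type but still gives continuity. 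Once continuity is established, transitivity of the relation "joined by a path in $\calC_0$" finishes the proof.
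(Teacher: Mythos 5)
Your proposal is correct and follows essentially the same route as the paper: differentiate $f(\varphi_\lambda(z))$ in the path parameter, control the partial derivatives by Lemma \ref{lem-F1}(ii), and use $\|A\|<1$ to absorb the factor $e^{q|\varphi_\lambda(z)|^2/2}$ into the Gaussian weight. The only (harmless) differences are that you use the single homotopy $t\mapsto t\varphi$ down to the constant map $0$, whereas the paper first contracts $A$ via $\varphi_t(z)=tAz+b$ and then joins the constant maps $\varphi(0)$ and $\phi(0)$ by a linear segment, and that your direct estimate over $\CC^n$ dispenses with the normalization step the paper uses in its Step 1.
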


\begin{proof}
We divide the proof into three steps.

\textbf{Step 1.} We show that if the operator $C_{\varphi}:\calF^p(\mathbb C^n) \to \calF^q(\mathbb C^n)$ is compact, then $C_{\varphi} \sim C_{\varphi(0)}$ in $\calC(\calF^p(\mathbb C^n),\calF^q(\mathbb C^n))$ via a path in $\calC_0(\calF^p(\mathbb C^n), \calF^q(\mathbb C^n))$.
Let $\varphi(z) = Az + b$, where $A$ is an $n \times n$ matrix with $\|A\| < 1$ and $b$ is an $n \times 1$ vector.

If $A = 0$, then the assertion is trivial. Suppose that $0 < \|A\| < 1$ and $\textnormal{rank}A = s$. For each $t \in [0,1]$, put $\varphi_t(z) = \varphi(tz) = tAz + b, \ z \in \mathbb C^n$. By Theorems \ref{thm-co} and \ref{thm-co-1}, all operators $C_{\varphi_t}$ with $t \in [0,1]$ are compact from  $\calF^p(\mathbb C^n)$ to $\calF^q(\mathbb C^n)$, i. e. $C_{\varphi_t} \in \calC_0(\calF^p(\mathbb C^n), \calF^q(\mathbb C^n))$ for all $t \in [0,1]$. Moreover, $C_{\varphi} = C_{\varphi_1}$ and $C_{\varphi(0)} = C_{\varphi_0}$. 

We now prove that the map 
$$
[0,1] \longrightarrow \calC(\calF^p(\mathbb C^n), \calF^q(\mathbb C^n)), \ t \longmapsto C_{\varphi_t},
$$
is continuous, that is, 
$\|C_{\varphi_t} - C_{\varphi_{t_0}}\| \to 0$ as $t \to t_0$ for every $t_0 \in [0,1]$.

Let $\widetilde{\varphi}(z) = \widetilde{A}z + \widetilde{b}$ be the normalization of $\varphi$, where the singular value decomposition of $A$ is $V \widetilde{A} U$ and $\widetilde{b} = V^*b$.
Then $\displaystyle \widetilde{\varphi_t \ }(z) = t\widetilde{A}z + \widetilde{b}$, and by \eqref{eq-w-nor}, $C_{\varphi_t} = C_{U} C_{\widetilde{\varphi_t}} C_{V}$ and $C_{\widetilde{\varphi_t}} = C_{U^*}C_{\varphi_t}C_{V^*}$ for all $t \in [0,1]$.
Thus, $\|C_{\varphi_t} - C_{\varphi_{t_0}}\|  = \|C_{\widetilde{\varphi_t}} - C_{\widetilde{\varphi_{t_0}}}\|$ for all $t, t_0 \in [0,1].$

Fix $t_0 \in [0,1]$. For every $t \in [0,1]$ and $f \in \calF^p(\mathbb C^n)$ with $\|f\|_{n,p} \leq 1$, using \eqref{eq-normf} and the fact that for each $z_{[s]} \in \CC^s$, there is a number $\tau = \tau(z_{[s]})$ in between $t_0$ and $t$ such that
\begin{align*}
f(\widetilde{\varphi_t}(z)) - f(\widetilde{\varphi_{t_0}}(z)) &= 
f\big(t\widetilde{A}_{[s]}z_{[s]} + \widetilde{b}_{[s]}, \widetilde{b}'_{[s]}\big) - f\big(t_0\widetilde{A}_{[s]}z_{[s]} + \widetilde{b}_{[s]}, \widetilde{b}'_{[s]}\big) \\
&= (t - t_0)\sum_{i=1}^s \widetilde{a}_{ii}z_i \dfrac{\partial f}{\partial z_i}\big(\tau \widetilde{A}_{[s]}z_{[s]} + \widetilde{b}_{[s]}, \widetilde{b}'_{[s]}\big),
\end{align*}
we obtain
\begin{align*}
& \|C_{\widetilde{\varphi_t}}f - C_{\widetilde{\varphi_{t_0}}}f\|_{n,q}^q \\
= & \left( \dfrac{q}{2 \pi} \right)^s \int_{\mathbb C^s} \left|f\big(t\widetilde{A}_{[s]}z_{[s]} + \widetilde{b}_{[s]}, \widetilde{b}'_{[s]}\big) - f\big(t_0\widetilde{A}_{[s]}z_{[s]} + \widetilde{b}_{[s]}, \widetilde{b}'_{[s]}\big)\right|^q e^{-\frac{q\left|z_{[s]}\right|^2}{2}}\; dA(z_{[s]}) \\
= & |t - t_0|^q \left( \dfrac{q}{2 \pi} \right)^s \int_{\mathbb C^s} \left|\sum_{i=1}^s \widetilde{a}_{ii}z_i \dfrac{\partial f}{\partial z_i}\big(\tau \widetilde{A}_{[s]}z_{[s]} + \widetilde{b}_{[s]}, \widetilde{b}'_{[s]}\big) \right|^q e^{-\frac{q\left|z_{[s]}\right|^2}{2}}\; dA(z_{[s]}).
\end{align*}
From this and Lemma \ref{lem-F1}(ii), for some constant $C>0$ satisfying
$$
(x_1 + ... + x_s)^q \leq C^q (x_1^q + ... + x_s^q), \ \forall x_1,...,x_s \geq 0,
$$
we get that for every $t \in [0,1]$ and $f \in \calF^p(\mathbb C^n)$ with $\|f\|_{n,p} \leq 1$,
\begin{align*}
& \|C_{\widetilde{\varphi_t}}f - C_{\widetilde{\varphi_{t_0}}}f\|_{n,q}^q \\
\leq &\; C^q |t-t_0|^q \left( \dfrac{q}{2 \pi} \right)^s \int_{\mathbb C^s} \sum_{i=1}^s |\widetilde{a}_{ii}z_i|^q \left|\dfrac{\partial f}{\partial z_i}\big(\tau \widetilde{A}_{[s]}z_{[s]} + \widetilde{b}_{[s]}, \widetilde{b}'_{[s]}\big) \right|^q e^{-\frac{q\left|z_{[s]}\right|^2}{2}}\; dA(z_{[s]})\\
\leq &\; C^q e^{2q} |t-t_0|^q  \left( \dfrac{q}{2 \pi} \right)^s \\
\times & \int_{\mathbb C^s} \sum_{i=1}^s |\widetilde{a}_{ii}z_i|^q \big(1 + \big|\tau \widetilde{a}_{ii}z_i + \widetilde{b}_i\big|\big)^q  e^{\frac{q\left|\big(\tau \widetilde{A}_{[s]}z_{[s]} + \widetilde{b}_{[s]}, \widetilde{b}'_{[s]}\big)\right|^2}{2}} \|f\|^q_{n,p} e^{-\frac{q\left|z_{[s]}\right|^2}{2}}\; dA(z_{[s]})\\
\leq &\; C^q e^{2q} |t-t_0|^q  \left( \dfrac{q}{2 \pi} \right)^s \\
\times & \int_{\mathbb C^s} \sum_{i=1}^s |\widetilde{a}_{ii}z_i|^q \big(1 + |\widetilde{a}_{ii}z_i| + |\widetilde{b}_i|\big)^q e^{\frac{q \big(\left|\widetilde{A}_{[s]}z_{[s]}\right| + \left|\widetilde{b}_{[s]}\right|\big)^2 + q\left|\widetilde{b}'_{[s]}\right|^2}{2}} e^{-\frac{q\left|z_{[s]}\right|^2}{2}}\; dA(z_{[s]})\\
= & \; M^q |t-t_0|^q,
\end{align*}
where
\begin{align*}
& M^q = C^q e^{2q}  \left( \dfrac{q}{2 \pi} \right)^s \\
\times & \int_{\mathbb C^s} \sum_{i=1}^s |\widetilde{a}_{ii}z_i|^q \big(1 + |\widetilde{a}_{ii}z_i| + |\widetilde{b}_i|\big)^q e^{\frac{q \big(\left|\widetilde{A}_{[s]}z_{[s]}\right| + \left|\widetilde{b}_{[s]}\right|\big)^2 + q\left|\widetilde{b}'_{[s]}\right|^2}{2}} e^{-\frac{q\left|z_{[s]}\right|^2}{2}}\; dA(z_{[s]}) < \infty, 
\end{align*}
since $\|\widetilde{A}_{[s]}\| < 1$.

Consequently,
$$
\|C_{\varphi_t} - C_{\varphi_{t_0}}\|  = \|C_{\widetilde{\varphi_t}} - C_{\widetilde{\varphi_{t_0}}}\| \leq M |t-t_0|,\ \forall t, t_0 \in [0,1]. 
$$
This implies that 
$\displaystyle \lim_{t \to t_0}\|C_{\varphi_t} - C_{\varphi_{t_0}}\| = 0$ for all $t_0 \in [0,1]$.

\textbf{Step 2.} We prove that for every $\alpha, \beta \in \mathbb C^n$, the operators $C_{\alpha} \sim C_{\beta}$ in $\calC(\calF^p(\mathbb C^n),\calF^q(\mathbb C^n))$ via a path in $\calC_0(\calF^p(\mathbb C^n), \calF^q(\mathbb C^n))$.

For each $t \in [0,1]$, put $\gamma_t = (1-t)\alpha + t\beta$. Then $C_{\alpha} = C_{\gamma_0}$ and $C_{\beta} = C_{\gamma_1}$, and all operators $C_{\gamma_t}, t \in [0,1],$ are compact from $\calF^p(\mathbb C^n)$ to $\calF^q(\mathbb C^n)$, i. e. $C_{\gamma_t} \in \calC_0(\calF^p(\mathbb C^n), \calF^q(\mathbb C^n))$ for all $t \in [0,1]$.

We show that the map 
$$
[0,1] \longrightarrow \calC(\calF^p(\mathbb C^n), \calF^q(\mathbb C^n)),\  t \longmapsto C_{\gamma_t},
$$
is continuous.
Fix an arbitrary number $t_0 \in [0,1]$. For each $t \in [0,1]$ and $f \in \calF^p(\mathbb C^n)$ with $\|f\|_{n,p} \leq 1$, using Lemma \ref{lem-F1}(ii), for some number $\tau$ in between $t_0$ and $t$, we have
\begin{align*}
& \|C_{\gamma_t}f - C_{\gamma_{t_0}}f\|_{n,q} = |f(\gamma_t) - f(\gamma_{t_0})| \|1\|_{n,q} \\
 = &\; |t - t_0| \left| \sum_{i=1}^n \dfrac{\partial f(\gamma_{\tau})}{\partial z_i} (\beta_i - \alpha_i) \right|  \leq |t-t_0| \sum_{i=1}^n |\beta_i - \alpha_i| \left| \dfrac{\partial f(\gamma_{\tau})}{\partial z_i}\right| \\
 \leq & \; e^2 |t-t_0| \sum_{i=1}^n |\beta_i - \alpha_i| (1 + |(1-\tau)\alpha_i + \tau \beta_i|) e^{\frac{|\gamma_{\tau}|^2}{2}} \|f\|_{n,p}  \\
 \leq &\; e^2 |t-t_0| \sum_{i=1}^n |\beta_i - \alpha_i| (1 + |\alpha_i| + |\beta_i|) e^{\frac{(|\alpha| + |\beta|)^2}{2}}.
\end{align*}
From this it follows that
$$
\|C_{\gamma_t} - C_{\gamma_{t_0}}\| \leq e^2 |t-t_0| \sum_{i=1}^n |\beta_i - \alpha_i| (1 + |\alpha_i| + |\beta_i|) e^{\frac{(|\alpha| + |\beta|)^2}{2}}, \ \forall t, t_0 \in [0,1].
$$
Therefore,
$ \displaystyle \lim_{t \to t_0}\|C_{\gamma_t} - C_{\gamma_{t_0}}\| = 0.$

\textbf{Step 3.} Let $C_{\varphi}$ and $C_{\phi}$ be two compact composition operators from $\calF^p(\mathbb C^n)$ to $\calF^q(\mathbb C^n)$. By Steps 1 and 2, $C_{\varphi} \sim C_{\varphi(0)} \sim C_{\phi(0)} \sim C_{\phi}$ in $\calC(\calF^p(\mathbb C^n),\calF^q(\mathbb C^n))$ via the paths in the set $\calC_0(\calF^p(\mathbb C^n),\calF^q(\mathbb C^n))$.

From this the assertion follows.
\end{proof}

From Theorem \ref{thm-co-1} and Proposition \ref{prop-tp-com} we immediately get the following result.

\begin{thm}\label{thm-tp-1}
If $0 < q < p < \infty$, then the space $\calC(\calF^p(\mathbb C^n), \calF^q(\mathbb C^n))$ is path connected.
\end{thm}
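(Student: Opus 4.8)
The plan is to observe that Theorem \ref{thm-tp-1} is an immediate consequence of the two results just established, the only point to check being that for $0 < q < p < \infty$ the space of \emph{all} composition operators from $\calF^p(\CC^n)$ to $\calF^q(\CC^n)$ coincides with the space of \emph{compact} ones. Indeed, by Theorem \ref{thm-co-1}, when $0 < q < p < \infty$ a composition operator $C_\varphi \colon \calF^p(\CC^n) \to \calF^q(\CC^n)$ is bounded if and only if it is compact, if and only if $\varphi(z) = Az + b$ with an $n \times n$ matrix $A$ satisfying $\|A\| < 1$ and $b \in \CC^n$. In particular the delicate boundary case $\|A\| = 1$, which has to be dealt with when $p \le q$, does not occur here, and hence
$$
\calC(\calF^p(\CC^n), \calF^q(\CC^n)) = \calC_0(\calF^p(\CC^n), \calF^q(\CC^n)).
$$

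Next I would simply invoke Proposition \ref{prop-tp-com}, which asserts that $\calC_0(\calF^p(\CC^n), \calF^q(\CC^n))$ is path connected inside $\calC(\calF^p(\CC^n), \calF^q(\CC^n))$. Combining this with the identity above yields that the whole space $\calC(\calF^p(\CC^n), \calF^q(\CC^n))$ is path connected, which is exactly the assertion.

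Thus there is essentially nothing further to prove at this stage: all the substantive work — constructing the norm-continuous paths $t \mapsto C_{\varphi_t}$ with $\varphi_t(z) = \varphi(tz)$, which contract the linear part of $\varphi$ down to the constant symbol $\varphi(0)$, then the straight-line path $\gamma_t = (1-t)\alpha + t\beta$ joining two constant symbols, and checking continuity of these paths in the operator norm by means of the Cauchy-type derivative estimate in Lemma \ref{lem-F1}(ii) — has already been carried out in the proof of Proposition \ref{prop-tp-com}. The one genuinely new ingredient for Theorem \ref{thm-tp-1} is the equivalence ``bounded $\Longleftrightarrow$ compact'' for composition operators in the range $0 < q < p < \infty$ supplied by Theorem \ref{thm-co-1}, so I do not anticipate any real obstacle here.
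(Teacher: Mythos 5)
Your proposal is correct and matches the paper exactly: the authors derive Theorem \ref{thm-tp-1} immediately from Theorem \ref{thm-co-1} (bounded $\Leftrightarrow$ compact when $0<q<p<\infty$, forcing $\calC(\calF^p(\CC^n),\calF^q(\CC^n))=\calC_0(\calF^p(\CC^n),\calF^q(\CC^n))$) together with Proposition \ref{prop-tp-com}. Nothing is missing.
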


Now we study the case when $0 < p \leq q < \infty$. Let denote by $E_n$ the set of all $n \times n$ matrices whose  norm is not greater than $1$. 
We say that two matrices $A$ and $D$ in $E_n$ are \textit{equivalent} and, briefly, write $A \sim D$ if $A\xi = D \xi$ for all $\xi \in \mathbb C^n$ with $|A\xi| = |\xi|$ or $|D\xi| = |\xi|$. Obviously, the relation $A \sim D$ is an equivalence relation on $E_n$. Let $[E_n]$ be the set of all equivalence classes induced by this relation $A \sim D$.

For a class $[A] \in [E_n]$, we denote by $\calC(n,p,q,[A])$ the set of all bounded composition operators $C_{\varphi}$ from $\calF^p(\mathbb C^n)$ to $\calF^q(\mathbb C^n)$ induced by $\varphi(z) = Az + b$ with $A \in [A]$. Then, by Theorem \ref{thm-co},
$$
\calC(n,p,q,[A]) = \left\{C_{\varphi}: \varphi(z) = Az + b, A \in [A], \langle A\xi, b \rangle = 0 \text{ if } |A\xi| = |\xi|\right\}.
$$
We can verify that $[0] = \{A \in E_n: \|A\| < 1\}$. Indeed, if $\|A\| < 1$, then $|A\xi| < |\xi|$ for all nonzero vectors $\xi \in \mathbb C^n$, hence $A$ is equivalent to the zero matrix. Otherwise, if $\|A\| = 1$, then there is a nonzero vector $\xi \in \mathbb C^n$ such that $|A\xi| = |\xi|$, but obviously, $A\xi \neq 0\xi$, i.e., $A \not \sim 0$. From this and Theorem \ref{thm-co} and Proposition \ref{prop-tp-com}, it follows that $\calC(n,p,q,[0]) = \calC_0(\calF^p(\CC^n), \calF^q(\CC^n))$ is a path connected set in $\calC(\calF^p(\mathbb C^n), \calF^q(\mathbb C^n))$. 

In view of this, we will show that this statement is also true for all sets $\calC(n,p,q,[A])$, i.e. all sets $\calC(n,p,q,[A])$ are path connected in $\calC(\calF^p(\mathbb C^n), \calF^q(\mathbb C^n))$. To do this, we need the following results.

\begin{lem}\label{lem-sim}
For every nonzero equivalence class $[A] \in [E_n]$, there exist a number $j \in \NN$ and a pair of $n \times n$ unitary matrices $(V, U)$ such that the class $[A]$ consists of all the following matrices
\begin{equation}\label{eq-ma} 
A = V\begin{pmatrix}
I_j & 0 \\
0 & G
\end{pmatrix}U, 
\end{equation}
where $I_j$ is the $j \times j$ unit matrix and $G$ is an $(n-j) \times (n-j)$ matrix with $\|G\| < 1$.
\end{lem}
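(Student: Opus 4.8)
The plan is to put a representative of $[A]$ into a normal form via the singular value decomposition and then read off the entire class coordinate-wise.

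For $M\in E_n$ write $S_M:=\{\xi\in\CC^n:\ |M\xi|=|\xi|\}$. First I would record the elementary fact that two matrices $A,D\in E_n$ satisfy $A\sim D$ if and only if $S_A=S_D$ and $A\xi=D\xi$ for every $\xi$ in this common set: if $A\sim D$ and $\xi\in S_D$, then $A\xi=D\xi$, whence $|A\xi|=|D\xi|=|\xi|$ and $\xi\in S_A$, so $S_D\subseteq S_A$, and symmetrically $S_A\subseteq S_D$; the converse is immediate from the definition. Next, I would pick any $A_0\in[A]$. Since $[A]\neq[0]$ and $[0]=\{M\in E_n:\|M\|<1\}$ (as verified just before the lemma), we have $\|A_0\|=1$. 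By the singular value decomposition, write $A_0=V\widetilde A U$ with $V,U$ unitary and $\widetilde A=\mathrm{diag}(\widetilde a_{11},\dots,\widetilde a_{nn})$, $1=\widetilde a_{11}\geq\dots\geq\widetilde a_{nn}\geq0$; set $j=\max\{i:\widetilde a_{ii}=1\}\in\NN$, so that $\widetilde A=\begin{pmatrix}I_j&0\\0&\Sigma'\end{pmatrix}$ with $\|\Sigma'\|<1$ and $S_{\widetilde A}=\CC^j\times\{0\}$.

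For an arbitrary $A\in E_n$ I would put $B=V^*AU$, so $A=VBU$, $\|B\|=\|A\|$, and the identities $|A\xi|=|BU\xi|$, $|\xi|=|U\xi|$ give $S_A=U^*S_B$; moreover, for $\eta\in\CC^j\times\{0\}$ one has $A(U^*\eta)=VB\eta$ while $A_0(U^*\eta)=V\eta$. Hence, by the first step, $A\sim A_0$ if and only if (a) $B\eta=\eta$ for all $\eta\in\CC^j\times\{0\}$ and (b) $S_B=\CC^j\times\{0\}$. Condition (a) says the first $j$ columns of $B$ are the standard basis vectors $e_1,\dots,e_j$, i.e. $B=\begin{pmatrix}I_j&C\\0&G\end{pmatrix}$ with $G$ an $(n-j)\times(n-j)$ block. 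The bound $\|B\|\leq1$ then forces $C=0$: if $C\eta''\neq0$ for some $\eta''\in\CC^{n-j}$, then testing $B$ on the vectors $(\lambda C\eta'',\eta'')$ and letting $\lambda\to+\infty$ makes $|B(\lambda C\eta'',\eta'')|^2-|(\lambda C\eta'',\eta'')|^2$ tend to $+\infty$, contradicting $\|B\|\leq1$. With $C=0$ one gets $S_B=\CC^j\times S_G$, so condition (b) reduces to $S_G=\{0\}$, which — the unit sphere of $\CC^{n-j}$ being compact — is equivalent to $\|G\|<1$.

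Putting this together, $[A]=[A_0]$ is exactly the set of matrices $V\begin{pmatrix}I_j&0\\0&G\end{pmatrix}U$ with $G$ an $(n-j)\times(n-j)$ matrix of norm $<1$ (each such matrix lies in $E_n$, since its norm equals $1$, and its equivalence with $A_0$ is precisely what was just characterized); this is the asserted description, and $j\geq1$ because $\widetilde a_{11}=1$. I expect the step $C=0$ to be the one genuine point of the argument, as it is where the operator-norm restriction has to be used in an essential, mildly computational way; everything else is bookkeeping with the singular value decomposition and the definition of $\sim$.
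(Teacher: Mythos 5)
Your argument is correct, and it follows the same overall strategy as the paper: fix a representative $A^0$ of the class, use its singular value decomposition to produce the unitary pair $(V,U)$ and the block form $\begin{pmatrix} I_j & 0\\ 0 & G^0\end{pmatrix}$, and then characterize the whole class in these coordinates. The difference lies in how the harder inclusion is handled. The paper simply invokes the proof of Lemma~2.4 of Dai \cite{D-15} for the statement that every $A\sim A^0$ has the form \eqref{eq-ma}, and only writes out the easy converse; you prove both directions from scratch. The two ingredients you supply that the paper leaves to the citation are (i) the reformulation of $\sim$ in terms of the isometry sets $S_M$ (namely, $A\sim D$ iff $S_A=S_D$ and the two maps agree there), which reduces the problem to identifying $S_B$ and the action of $B$ on $\CC^j\times\{0\}$ for $B=V^*AU^*$, and (ii) the observation that the bound $\|B\|\le 1$ forces the off-diagonal block $C$ to vanish --- this is indeed the one place where the operator-norm restriction enters essentially, and your test vectors $(\lambda C\eta'',\eta'')$ with $\lambda\to+\infty$ do the job. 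One slip of notation: you write $B=V^*AU$ but then use $A=VBU$; the intended definition is $B=V^*AU^*$, and everything downstream is consistent with that. With this fixed, your proof is complete and self-contained, which is arguably preferable to deferring to an external argument.
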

\begin{proof}
Fix an arbitrary matrix $A^0 \in [A]$ and suppose that the singular value decomposition of $A^0$ is $V \widetilde{A^0}U$ with 
$
\widetilde{A^0} = \begin{pmatrix}
I_j & 0 \\
0 & G^0
\end{pmatrix},
$
where $j = \max\{i: \widetilde{a^0_{ii}} = 1\}$ and $G^0$ is a diagonal $(n-j) \times (n-j)$ matrix with non-negative diagonal elements in the decreasing order and $\|G^0\| < 1$.

By the proof of \cite[Lemma 2.4]{D-15}, every matrix $A \in [A]$, i.e. $A \sim A^0$, can be represented as in \eqref{eq-ma}.

Conversely, we can easily see that every matrix $A$ in the form \eqref{eq-ma} is equivalent to $A^0$, i.e. $A \in [A]$. Indeed, for every $\xi \in \CC^n$,
$ |A\xi| = |\xi|$ or $|A^0\xi| = |\xi|$ if and only if $(U\xi)'_{[j]} = 0$; and hence, 
$$
A\xi = V \begin{pmatrix}
(U\xi)_{[j]}\\
0
\end{pmatrix}
= A^0\xi
$$
for every $\xi \in \CC^n$ satisfying $ |A\xi| = |\xi|$ or $|A^0\xi| = |\xi|$. 
\end{proof}

\begin{rem}\label{rem-sim}
While the number $j$ in Lemma \ref{lem-sim} is uniquely determined for each class $[A] \in [E_n]\setminus \{[0]\}$, the pair of unitary matrices $(V,U)$ is not unique. However, all these pairs are closely related in the following sense. Suppose that $(\widehat{V},\widehat{U})$ is another pair. Then 
$A^0 = \widehat{V}\begin{pmatrix}
I_j & 0 \\
0 & G
\end{pmatrix}\widehat{U},$
where $ G$ is some $(n-j) \times (n-j)$ matrix and $\|G\| < 1$.

Let $G = V_1 \widetilde{G} U_1$ be the singular value decomposition of $G$ and put 
\begin{equation}\label{eq-uv0}
V_0 = \begin{pmatrix}
I_j & 0 \\
0 & V_1
\end{pmatrix}
\text{ and }
U_0 = \begin{pmatrix}
I_j & 0 \\
0 & U_1
\end{pmatrix}.
\end{equation}
We have
$$
A^0 = \widehat{V}V_0\begin{pmatrix}
I_j & 0 \\
0 & \widetilde{G}
\end{pmatrix}U_0\widehat{U}.
$$
From this it follows that $\widetilde{G} = G^0$, which means that $(\widehat{V}V_0, U_0\widehat{U})$ is the other pair of unitary factors of the singular value decomposition 
$A^0 = (\widehat{V}V_0) \widetilde{A^0} (U_0\widehat{U})$. Then, by \cite[Theorem~2.6.5]{HR-90}, there are $n_1 \times n_1$ unitary matrix $H_1$,..., $n_d \times n_d$ unitary matrix $H_d$ and $(n-s) \times (n-s)$ unitary matrices $W_1$ and $W_2$ such that
\begin{equation}\label{eq-uv}
\widehat{V}V_0 = V (H_1 \oplus ... \oplus H_d \oplus W_1) \text{ and } U_0\widehat{U} = (H_1^* \oplus ... \oplus H_d^* \oplus W_2^*)U
\end{equation}
where, for each $1 \leq i \leq d$, $n_i$ is the multiplicity of the distinct positive singular value $\sigma_i$ of $A^0$ and $\sigma_1 > \sigma_2 > ... > \sigma_d$ and $\textnormal{rank}A^0 = s$. In this case $\sigma_1 = 1$ and $n_1 = j$.
\end{rem}

\begin{prop}\label{prop-est-norm}
Let $0 < p \leq q < \infty$ and $C_{\varphi}$ and $C_{\phi}$ be two composition operators in $\calC(\calF^p(\mathbb C^n), \calF^q(\mathbb C^n))$ with $\varphi(z) = Az + b$ and $\phi(z) = Dz + e$. If $A \not\sim D$ then 
$$
\|C_{\varphi} - C_{\phi}\| \geq \dfrac{1}{2}.
$$
\end{prop}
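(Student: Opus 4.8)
The plan is to test the difference $C_{\varphi}-C_{\phi}$ on the normalized reproducing kernels $k_w$, since these satisfy $\|k_w\|_{n,p}=1$ and $C_{\varphi}k_w,\ C_{\phi}k_w$ are again scalar multiples of normalized kernels. Because $A\not\sim D$ and the inequality to be proved is symmetric in $(\varphi,\phi)$, I may fix a unit vector $\xi\in\CC^n$ with $|A\xi|=|\xi|=1$ and $A\xi\neq D\xi$, and set $u=A\xi$, so $|u|=1$. First I would record two consequences of $|A\xi|=|\xi|$ together with boundedness of $C_{\varphi}$: a short Cauchy--Schwarz argument (from $\langle A^*u,\xi\rangle=\langle u,u\rangle=1$ and $|A^*u|\leq\|A\|\leq 1$) gives $A^*u=\xi$, while Theorem~\ref{thm-co}(a) applied to the vector $\xi$ gives $\langle u,b\rangle=0$. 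A one-line computation then yields $C_{\varphi}k_{ru}=k_{r\xi}$ for every $r>0$; in particular $\|C_{\varphi}k_{ru}\|_{n,q}=1$ for all $r$.

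Next I would compute $C_{\phi}k_{ru}$ in the same way: putting $v=D^*u$ one gets $C_{\phi}k_{ru}=e^{r\langle e,u\rangle+\frac{r^2}{2}(|v|^2-1)}\,k_{rv}$, hence $\|C_{\phi}k_{ru}\|_{n,q}=e^{\,r\,\re\langle e,u\rangle+\frac{r^2}{2}(|v|^2-1)}$. Then I would split into two cases according to the size of $|D^*u|$. If $|D^*u|<1$, the quadratic term forces $\|C_{\phi}k_{ru}\|_{n,q}\to 0$ as $r\to\infty$, so
\[
\|C_{\varphi}-C_{\phi}\|\ \geq\ \|(C_{\varphi}-C_{\phi})k_{ru}\|_{n,q}\ \geq\ 1-\|C_{\phi}k_{ru}\|_{n,q}\ \geq\ \frac12
\]
once $r$ is large enough, and we are done.

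The hard part will be the remaining case $|D^*u|=1$, where $C_{\varphi}k_{ru}$ and $C_{\phi}k_{ru}$ are both unit vectors and a bare norm comparison yields nothing. To handle it I would first observe, by the same Cauchy--Schwarz device, that $DD^*u=u$, so $v=D^*u$ satisfies $|Dv|=|v|$; Theorem~\ref{thm-co}(a) applied to $v$ then forces $\langle u,e\rangle=0$, so that $C_{\phi}k_{ru}=k_{rv}$ exactly. Moreover $v\neq\xi$, since $v=\xi$ would give $D\xi=Dv=DD^*u=u=A\xi$, contradicting the choice of $\xi$. As $|v|=|\xi|=1$ and $v\neq\xi$, Cauchy--Schwarz is strict, so $\langle\xi,v\rangle\neq 1$ and hence $\re\langle\xi,v\rangle<1$. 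The point of this is that the entire function $h_r:=(C_{\varphi}-C_{\phi})k_{ru}=k_{r\xi}-k_{rv}$ then satisfies
\[
h_r(r\xi)=e^{r^2/2}\bigl(1-e^{r^2(\langle\xi,v\rangle-1)}\bigr),\qquad |h_r(r\xi)|\ \geq\ \frac12\,e^{r^2/2}
\]
for $r$ large; feeding this into the pointwise bound of Lemma~\ref{lem-F1}(i) applied to $h_r\in\calF^q(\CC^n)$ gives $\|h_r\|_{n,q}\geq|h_r(r\xi)|\,e^{-r^2/2}\geq\frac12$, and dividing by $\|k_{ru}\|_{n,p}=1$ completes the proof. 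The main obstacle throughout is thus the case $|D^*u|=1$: the separation of the two normalized kernels $k_{r\xi}$ and $k_{rv}$ must be quantified through point evaluation rather than through a triangle-inequality estimate, which is exactly what the strict Cauchy--Schwarz inequality and Lemma~\ref{lem-F1}(i) provide.
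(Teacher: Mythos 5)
Your argument is correct, and it reaches the required bound by a genuinely different route than the paper. The paper never computes $C_{\varphi}k_w$ and $C_{\phi}k_w$ explicitly; instead it applies Lemma \ref{lem-F1}(i) to $C_{\varphi}k_w-C_{\phi}k_w$ to get the pointwise bound $\|C_{\varphi}-C_{\phi}\|\geq \left|e^{\langle\varphi(z),w\rangle}-e^{\langle\phi(z),w\rangle}\right|e^{-\frac{|z|^2+|w|^2}{2}}$, then plugs in \emph{both} $w=\varphi(z)$ and $w=\phi(z)$ and adds the two resulting inequalities to obtain the single symmetrized estimate $2\|C_{\varphi}-C_{\phi}\|\geq e^{\frac{|\varphi(z)|^2-|z|^2}{2}}\bigl(1-e^{-\frac{|\varphi(z)-\phi(z)|^2}{2}}\bigr)$; taking $z=\lambda\xi$ and $|\lambda|\to\infty$ (using $\langle A\xi,b\rangle=0$ and $A\xi\neq D\xi$) gives $2\|C_{\varphi}-C_{\phi}\|\geq e^{|b|^2/2}\geq 1$ with no case distinction. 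Your version replaces this symmetrization trick by an explicit computation of the images of the kernels $k_{rA\xi}$ (via the identities $A^*u=\xi$, $DD^*u=u$ extracted from equality in Cauchy--Schwarz) and a dichotomy on $|D^*u|$; what it buys is a transparent picture of \emph{why} the operators separate (either $C_{\phi}k_{ru}$ collapses to $0$, or it is a genuinely different unit kernel $k_{rv}$ detected by point evaluation at $r\xi$), at the cost of being longer. All the individual steps check out: $A^*u=\xi$ and $DD^*u=u$ are correct, $\langle u,b\rangle=0$ and $\langle u,e\rangle=0$ follow from Theorem \ref{thm-co}(a), $v\neq\xi$ and $\re\langle\xi,v\rangle<1$ are right, and the final estimate via Lemma \ref{lem-F1}(i) is exactly the paper's device. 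One small point to patch: in the case $|D^*u|<1$ you invoke the reverse triangle inequality $\|(C_{\varphi}-C_{\phi})k_{ru}\|_{n,q}\geq 1-\|C_{\phi}k_{ru}\|_{n,q}$, which is only valid for $q\geq 1$; for $0<q<1$ the quasi-norm satisfies $\|f+g\|_{n,q}^q\leq\|f\|_{n,q}^q+\|g\|_{n,q}^q$, so you should write $\|(C_{\varphi}-C_{\phi})k_{ru}\|_{n,q}\geq\bigl(1-\|C_{\phi}k_{ru}\|_{n,q}^q\bigr)^{1/q}$, which still tends to $1$ as $r\to\infty$ and gives the same conclusion. (The paper sidesteps this entirely by working only with pointwise evaluations and the triangle inequality for complex numbers.)
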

\begin{proof}
Since $A \not\sim D$, there exists a point $\xi \in \mathbb C^n$ such that $A\xi \neq D\xi$ and $|A\xi| = |\xi| $ or $|D\xi| = |\xi|$. Without loss of generality, assume that $|A\xi| = |\xi|$.

For all $z, w \in \mathbb C^n$, by Lemma \ref{lem-F1}(i) and $\|k_w\|_{n,p} = 1$, we have
\begin{align*}
\|C_{\varphi} - C_{\phi}\| & \geq \|C_{\varphi}k_w - C_{\phi}k_w\|_{n,q} 
 \geq \left| e^{\langle \varphi(z), w \rangle} - e^{\langle \phi(z), w \rangle} \right| e^{-\frac{|z|^2 + |w|^2}{2}}.
\end{align*}
In particular, if $w = \varphi(z)$ or $w = \phi(z)$, then
\begin{align*}
\|C_{\varphi} - C_{\phi}\| & \geq e^{\frac{|\varphi(z)|^2 - |z|^2}{2}} - e^{\frac{|\phi(z)|^2 - |z|^2}{2}} e^{-\frac{|\varphi(z) - \phi(z)|^2}{2}},
\end{align*}
or, respectively,
\begin{align*}
\|C_{\varphi} - C_{\phi}\| & \geq e^{\frac{|\phi(z)|^2 - |z|^2}{2}} - e^{\frac{|\varphi(z)|^2 - |z|^2}{2}} e^{-\frac{|\phi(z) - \varphi(z)|^2}{2}},
\end{align*}
for all $z \in \mathbb C^n$. Thus
\begin{align*}
2\|C_{\varphi} - C_{\phi}\| & \geq \left( e^{\frac{|\varphi(z)|^2 - |z|^2}{2}} + e^{\frac{|\phi(z)|^2 - |z|^2}{2}}\right) \left( 1 - e^{-\frac{|\varphi(z) - \phi(z)|^2}{2}}\right) \\
& \geq e^{\frac{|\varphi(z)|^2 - |z|^2}{2}}\left( 1 - e^{-\frac{|\varphi(z) - \phi(z)|^2}{2}}\right), \ \forall z \in \mathbb C^n.
\end{align*}

On the other hand, since $|A\xi| = |\xi|$, by Theorem \ref{thm-co}(a), $\langle A\xi, b \rangle = 0$. Hence, for all $\lambda \in \mathbb C$,
$$
|\varphi(\lambda \xi)|^2 - |\lambda \xi|^2 = |\lambda A\xi + b|^2 - |\lambda \xi|^2 = |b|^2
$$ 
and 
$$
|\varphi(\lambda \xi) - \phi(\lambda \xi)|^2 \geq (|\lambda| |A\xi - D\xi| - |b-e|)^2 \to +\infty \text{ as } \ |\lambda| \to +\infty,
$$
since $A\xi \neq D\xi$.
Consequently, with $z = \lambda \xi, \lambda \in \CC$, we have
\begin{align*}
2\|C_{\varphi} - C_{\phi}\| & \geq e^{\frac{|b|^2}{2}}\left( 1 - e^{-\frac{|\varphi(\lambda \xi) - \phi(\lambda \xi)|^2}{2}}\right) \to e^{\frac{|b|^2}{2}} \text{ as }  \ |\lambda| \to +\infty.
\end{align*}
From this the desired inequality follows.
\end{proof}

\begin{prop}\label{prop-varphiA}
Let $0 < p \leq q < \infty$ and $C_{\varphi}$ be a composition operator in $\calC(\calF^p(\mathbb C^n), \calF^q(\mathbb C^n))$ induced by $\varphi(z) = Az + b$ with $\|A\| = 1$. Then the operators $C_{\varphi}$ and $C_A$ belong to the same path connected component of  $\calC(\calF^p(\mathbb C^n), \calF^q(\mathbb C^n))$.
\end{prop}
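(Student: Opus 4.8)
The plan is to connect $C_\varphi$ to $C_A$ by first straightening the vector part $b$ along the directions where $A$ is a strict contraction, and then using the normalization machinery to reduce to a situation where $b$ is fully absorbed. More precisely, recall from Theorem \ref{thm-co}(a) that boundedness of $C_\varphi$ forces $\langle A\zeta,b\rangle = 0$ whenever $|A\zeta|=|\zeta|$. Writing $\widetilde{\varphi}(z)=\widetilde{A}z+\widetilde{b}$ for a normalization of $\varphi$ with $A=V\widetilde{A}U$, equation \eqref{eq-w-nor} gives $C_\varphi = C_U C_{\widetilde{\varphi}} C_V$ and $C_A = C_U C_{\widetilde{A}} C_V$; since conjugation by the (norm-preserving, invertible) operators $C_U$ and $C_V$ is a homeomorphism of $\calC(\calF^p(\CC^n),\calF^q(\CC^n))$ onto itself, it suffices to show $C_{\widetilde{\varphi}}$ and $C_{\widetilde{A}}$ lie in the same path component. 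By Lemma \ref{lem-psiphi} (with $\psi\equiv 1$), $\widetilde{b}_i = 0$ for every $i\le j$, where $j=\max\{i:\widetilde{a}_{ii}=1\}$; thus $\widetilde{b}$ is supported on the coordinates $j+1,\dots,n$ on which $\widetilde{A}$ acts as a strict contraction (all $\widetilde{a}_{ii}<1$ there).

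The main step is then to run the homotopy $t\mapsto C_{\varphi_t}$ with $\varphi_t(z) = \widetilde{A}z + t\widetilde{b}$, $t\in[0,1]$, so that $\varphi_1 = \widetilde{\varphi}$ and $\varphi_0 = \widetilde{A}$. First I would check that each $C_{\varphi_t}$ is a bounded operator from $\calF^p(\CC^n)$ to $\calF^q(\CC^n)$: by Theorem \ref{thm-co}(a) one needs $\langle \widetilde{A}\zeta, t\widetilde{b}\rangle = 0$ whenever $|\widetilde{A}\zeta|=|\zeta|$, which holds because $|\widetilde{A}\zeta|=|\zeta|$ forces $\zeta$ to be supported on coordinates $1,\dots,j$, while $\widetilde{b}$ is supported on coordinates $j+1,\dots,n$, so the inner product vanishes identically in $t$. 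Next I would prove norm-continuity of $t\mapsto C_{\varphi_t}$. Using the representation \eqref{eq-normf} adapted to the (now constant-in-$t$) rank-$s$ matrix $\widetilde{A}$, together with the mean value estimate
\begin{align*}
f(\varphi_t(z)) - f(\varphi_{t_0}(z)) = (t-t_0)\sum_{i=j+1}^{s} \widetilde{b}_i\,\frac{\partial f}{\partial z_i}\big(\widetilde{A}_{[s]}z_{[s]} + \tau\widetilde{b}_{[s]},\, \tau\widetilde{b}'_{[s]}\big)
\end{align*}
for some $\tau$ between $t_0$ and $t$ (note only the coordinates $i>j$ contribute, since $\widetilde{b}_i=0$ for $i\le j$), and then Lemma \ref{lem-F1}(ii) to bound the partial derivatives, one arrives at $\|C_{\varphi_t}-C_{\varphi_{t_0}}\| \le M|t-t_0|$ exactly as in Step 1 of Proposition \ref{prop-tp-com}. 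The finiteness of the resulting constant $M$ rests on the Gaussian-type integral over $\CC^s$ converging, which is guaranteed here because the relevant block $\widetilde{A}'_{[j]}$ (the part of $\widetilde{A}_{[s]}$ acting on coordinates $j+1,\dots,s$) has norm strictly less than $1$, so the exponent $\tfrac{q}{2}(|\widetilde{A}_{[s]}z_{[s]}|+|\widetilde{b}_{[s]}|)^2 - \tfrac{q}{2}|z_{[s]}|^2$ stays bounded above by a quadratic form that is negative definite in the unconstrained coordinates (on the coordinates $\le j$ where $\widetilde{a}_{ii}=1$ there is no decay, but there $\widetilde{b}_i=0$ and the corresponding factors $|\widetilde{a}_{ii}z_i|^q(1+\cdots)^q$ are absent from the sum).

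The main obstacle I anticipate is precisely this convergence bookkeeping: one must carefully separate the coordinates $1,\dots,j$ (where $\widetilde{A}$ is an isometry, no Gaussian decay, but the summand in the derivative expression is absent because $\widetilde{b}_i=0$) from the coordinates $j+1,\dots,s$ (where there is strict contraction and hence genuine Gaussian decay that tames the polynomial weights), and check that the cross terms in $|\widetilde{A}_{[s]}z_{[s]}+\widetilde{b}_{[s]}|^2$ do not spoil integrability. Once the homotopy $\varphi_t$ is shown to be norm-continuous with all $C_{\varphi_t}$ bounded, we conclude $C_{\widetilde{\varphi}} = C_{\varphi_1} \sim C_{\varphi_0} = C_{\widetilde{A}}$, and conjugating back by $C_U,C_V$ gives $C_\varphi \sim C_A$ in $\calC(\calF^p(\CC^n),\calF^q(\CC^n))$, as desired.
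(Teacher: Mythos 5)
Your proposal follows essentially the same route as the paper: the homotopy $t\mapsto \widetilde{A}z+t\widetilde{b}$ after conjugating by $C_U,C_V$, Lemma \ref{lem-psiphi} to force $\widetilde{b}_i=0$ for $i\le j$, the mean value expansion in the remaining coordinates, and Lemma \ref{lem-F1}(ii) together with $\|\widetilde{A}'_{[j]}\|<1$ to get the Lipschitz bound $\|C_{\varphi_t}-C_{\varphi_{t_0}}\|\le M|t-t_0|$. The only slip is that the sum in your mean value formula should run over $i=j+1,\dots,n$ rather than stopping at $s$, since $\widetilde{b}_i$ may be nonzero for $i>s$; this does not affect the structure of the argument.
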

\begin{proof}

For each $t \in [0,1]$, put $\varphi_t(z) = Az + tb, \ z \in \mathbb C^n$. Then, by Theorem \ref{thm-co}, all operators $C_{\varphi_t}, t \in [0,1]$, are bounded from  $\calF^p(\mathbb C^n)$ to $\calF^q(\mathbb C^n)$ and $C_{\varphi_1} = C_{\varphi}$ and $C_{\varphi_0} = C_A$. Thus, we need to show that the map 
$$
[0,1] \longrightarrow \calC(\calF^p(\mathbb C^n), \calF^q(\mathbb C^n)), t \longmapsto C_{\varphi_t},
$$
is continuous, that is, 
$ \|C_{\varphi_t} - C_{\varphi_{t_0}}\| \to 0$ as $t \to t_0$ for all $t_0 \in [0,1]$.

Let $\widetilde{\varphi}(z) = \widetilde{A}z + \widetilde{b}$ be the normalization of $\varphi$, where the singular value decomposition of $A$ is $V \widetilde{A} U$ and $\widetilde{b} = V^*b$.
 Then $\displaystyle \widetilde{\varphi_t \ }(z) = \widetilde{A}z + t\widetilde{b}$ and by \eqref{eq-w-nor}, $C_{\varphi_t} = C_{U} C_{\widetilde{\varphi_t}} C_{V}$ and $C_{\widetilde{\varphi_t}} = C_{U^*}C_{\varphi_t}C_{V^*}$ for all $t \in [0,1]$.
Thus,
$ \|C_{\varphi_t} - C_{\varphi_{t_0}}\|  = \|C_{\widetilde{\varphi_t}} - C_{\widetilde{\varphi_{t_0}}}\|$ for all $t, t_0 \in [0,1]$.

Put $\textnormal{rank}A = s$ and $j = \max \{i: \widetilde{a}_{ii} = 1\}$. Then, by Lemma \ref{lem-psiphi}, $\widetilde{b}_{i} = 0$ for all $i \leq j$.

Fix $t_0 \in [0,1]$. For every $t \in [0,1]$ and $f \in \calF^p(\mathbb C^n)$ with $\|f\|_{n,p} \leq 1$, using the fact that for each $z \in \CC^n$, there is a number $\tau =\tau(z)$ in between $t$ and $t_0$ such that
\begin{align*}
f(\widetilde{A}z + t\widetilde{b}) - f(\widetilde{A}z + t_0 \widetilde{b}) & = 
f\big(z_{[j]}, \big(\widetilde{A}z + t\widetilde{b}\big)'_{[j]}\big) - f\big(z_{[j]}, \big(\widetilde{A}z + t_0 \widetilde{b}\big)'_{[j]}\big) \\
& = (t - t_0) \sum_{i = j+1 }^n \widetilde{b}_i \dfrac{\partial f}{\partial z_i}\big(z_{[j]}, \big(\widetilde{A}z + \tau \widetilde{b}\big)'_{[j]}\big),
\end{align*}
we obtain
\begin{align*}
& \|C_{\widetilde{\varphi_t}}f - C_{\widetilde{\varphi_{t_0}}}f\|_{n,q}^q \\
= & \left( \dfrac{q}{2 \pi} \right)^n \int_{\mathbb C^n} \left|f(\widetilde{A}z + t\widetilde{b}) - f(\widetilde{A}z + t_0 \widetilde{b})\right|^q e^{-\frac{q|z|^2}{2}}\; dA(z) \\
\leq &\; C^q |t-t_0|^q \left( \dfrac{q}{2 \pi} \right)^n \int_{\mathbb C^n}  \sum_{i = j+1 }^n \left|\widetilde{b}_i \dfrac{\partial f}{\partial z_i}\big(z_{[j]}, \big(\widetilde{A}z + \tau \widetilde{b}\big)'_{[j]}\big) \right|^q e^{-\frac{q|z|^2}{2}}\; dA(z),
\end{align*} 
for some constant $C>0$.

Moreover, for every $i = j+1,...,n$ and $z \in \CC^n$ fixed, applying Lemma \ref{lem-F1}(ii) to the function 
$ f\big(z_{[j]}, \cdot\big)$ in $\calF^p(\mathbb C^{n-j})$, hence, in $\calF^q(\mathbb C^{n-j})$, we get
\begin{align*}
\left|\dfrac{\partial f}{\partial z_i}\big(z_{[j]}, \big(\widetilde{A}z + \tau \widetilde{b}\big)'_{[j]}\big) \right| \leq &\; e^2 \big(1 + |\widetilde{a}_{ii} z_i + \tau \widetilde{b}_i|\big) e^{\frac{\left|\big(\widetilde{A}z + \tau \widetilde{b}\big)'_{[j]}\right|^2}{2}} \|f(z_{[j]}, \cdot)\|_{n-j, q} \\
\leq &\; e^2 \big(1 + |\widetilde{a}_{ii} z_i| + |\widetilde{b}_i|\big) e^{\frac{\left(\left|\widetilde{A}'_{[j]}z'_{[j]}\right| + \left|\widetilde{b}'_{[j]}\right|\right)^2}{2}} \|f(z_{[j]}, \cdot)\|_{n-j, q}.
\end{align*}
From this and Lemma \ref{lem-Fpq}, it follows that for every $i = j+1,...,n$,
\begin{align*}
&\left(\dfrac{q}{2\pi}\right)^{j} \int_{\mathbb C^j}\left|\dfrac{\partial f}{\partial z_i}\big(z_{[j]}, \big(\widetilde{A}z + \tau \widetilde{b}\big)'_{[j]}\big) \right|^q e^{-\frac{q\left|z_{[j]}\right|^2}{2}}dA(z_{[j]}) \\
\leq & \; e^{2q} \big(1 + |\widetilde{a}_{ii} z_i| + |\widetilde{b}_i|\big)^q e^{\frac{q\left(\left|\widetilde{A}'_{[j]}z'_{[j]}\right| + \left|\widetilde{b}'_{[j]}\right|\right)^2}{2}} \\
& \qquad \qquad  \qquad \qquad \times \left(\dfrac{q}{2\pi}\right)^{j} \int_{\mathbb C^{j}} \|f(z_{[j]}, \cdot)\|_{n-j, q}^q  e^{-\frac{q\left|z_{[j]}\right|^2}{2}} dA(z_{[j]}) \\
= & \ e^{2q} \big(1 + |\widetilde{a}_{ii} z_i| + |\widetilde{b}_i|\big)^q e^{\frac{q\left(\left|\widetilde{A}'_{[j]}z'_{[j]}\right| + \left|\widetilde{b}'_{[j]}\right|\right)^2}{2}} \|f\|_{n, q}^q \\
\leq & \ e^{2q} \big(1 + |\widetilde{a}_{ii} z_i| + |\widetilde{b}_i|\big)^q e^{\frac{q\left(\left|\widetilde{A}'_{[j]}z'_{[j]}\right| + \left|\widetilde{b}'_{[j]}\right|\right)^2}{2}} \left( \dfrac{q}{p} \right)^n \|f\|_{n,p}^q \\
\leq & \ e^{2q} \left( \dfrac{q}{p} \right)^n \big(1 + |\widetilde{a}_{ii} z_i| + |\widetilde{b}_i|\big)^q e^{\frac{q\left(\left|\widetilde{A}'_{[j]}z'_{[j]}\right| + \left|\widetilde{b}'_{[j]}\right|\right)^2}{2}}. 
\end{align*}
Therefore, for every $t \in [0,1]$ and $f \in \calF^p(\CC^n)$ with $\|f\|_{n,p} \leq 1$, we get 
\begin{align*}
& \|C_{\widetilde{\varphi_t}}f - C_{\widetilde{\varphi_{t_0}}}f\|_{n,q}^q \\
\leq &\ C^q |t-t_0|^q \left( \dfrac{q}{2 \pi} \right)^n \int_{\mathbb C^n}  \sum_{i = j+1 }^n \left|\widetilde{b}_i \dfrac{\partial f}{\partial z_i}\big(z_{[j]}, \big(\widetilde{A}z + \tau \widetilde{b}\big)'_{[j]}\big) \right|^q e^{-\frac{q|z|^2}{2}}\; dA(z)\\
\leq &\ C^q e^{2q} |t-t_0|^q \left( \dfrac{q}{p} \right)^n \left( \dfrac{q}{2 \pi} \right)^{n-j} \\
& \ \ \ \times \int_{\mathbb C^{n-j}}  \sum_{i = j+1 }^n |\widetilde{b}_i|^q \big(1 + |\widetilde{a}_{ii} z_i| + |\widetilde{b}_i|\big)^q e^{\frac{q\left(\left|\widetilde{A}'_{[j]}z'_{[j]}\right| + \left|\widetilde{b}'_{[j]}\right|\right)^2}{2}} e^{-\frac{q\left|z'_{[j]}\right|^2}{2}}\; dA(z'_{[j]})\\
= & \ M^q |t-t_0|^q,
\end{align*} 
where 
\begin{align*}
& M^q = C^q e^{2q} \left( \dfrac{q}{p} \right)^n  \left( \dfrac{q}{2 \pi} \right)^{n-j}\\
\times & \sum_{i = j+1 }^n |\widetilde{b}_i |^q \int_{\mathbb C^{n-j}} \big(1 + |\widetilde{a}_{ii} z_i| + |\widetilde{b}_i|\big)^q e^{\frac{q \big(\left|\widetilde{A}'_{[j]}z'_{[j]}\right| + \left|\widetilde{b}'_{[j]}\right|\big)^2}{2}} e^{-\frac{q\left|z'_{[j]}\right|^2}{2}}\; dA(z'_{[j]}) < \infty,
\end{align*}
since $\|\widetilde{A}'_{[j]}\| < 1$. 

Consequently, 
$\|C_{\varphi_t} - C_{\varphi_{t_0}}\|  = \|C_{\widetilde{\varphi_t}} - C_{\widetilde{\varphi_{t_0}}}\| \leq M|t-t_0|$ for all $t \in [0,1]$,
which completes the proof.
\end{proof}

\begin{prop}\label{prop-tp-0}
Let $0 < p \leq q < \infty$ and $1 \leq j \leq n-1$ and $G$ be an arbitrary $(n-j) \times (n-j)$ matrix with $\|G\| < 1$. Put
$$
A = \begin{pmatrix}
I_j & 0 \\
0 & 0
\end{pmatrix}
\text{ and }
D = \begin{pmatrix}
I_j & 0 \\
0 & G
\end{pmatrix}
$$
where $I_j$ is the $j \times j$ unit matrix. Then $C_A$ and $C_D$ are in the same path connected component of $\calC(\calF^p(\mathbb C^n), \calF^q(\mathbb C^n))$.
\end{prop}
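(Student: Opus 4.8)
The plan is to join $C_D$ to $C_A$ inside $\calC(\calF^p(\mathbb C^n),\calF^q(\mathbb C^n))$ along the one-parameter family of composition operators $C_{D_t}$ induced by
$$
D_t = \begin{pmatrix} I_j & 0 \\ 0 & tG \end{pmatrix}, \qquad t \in [0,1],
$$
so that $D_0 = A$ and $D_1 = D$. Since $\|tG\| \le \|G\| < 1$, each $D_t$ has $\|D_t\| = 1$; as the translation vector is $0$, the side condition of Theorem \ref{thm-co}(a) is vacuous, so every $C_{D_t}$ is bounded from $\calF^p(\mathbb C^n)$ to $\calF^q(\mathbb C^n)$ and the whole family lies in $\calC(\calF^p(\mathbb C^n),\calF^q(\mathbb C^n))$. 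Everything then reduces to proving that $t \mapsto C_{D_t}$ is continuous for the operator norm; this gives $C_A = C_{D_0} \sim C_{D_1} = C_D$ and finishes the proof.

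For the continuity I would run the argument of Proposition \ref{prop-varphiA}, with the last $n-j$ coordinates now playing the role of the strictly contracting block. Fix $t_0 \in [0,1]$ and $f \in \calF^p(\mathbb C^n)$ with $\|f\|_{n,p} \le 1$. Splitting $z = (z_{[j]}, z'_{[j]})$, we have $(C_{D_t}f)(z) = f\big(z_{[j]}, tGz'_{[j]}\big)$, and the fundamental theorem of calculus together with the mean value inequality gives, for each $z$, a number $\tau = \tau(z)$ between $t$ and $t_0$ with
$$
f\big(z_{[j]}, tGz'_{[j]}\big) - f\big(z_{[j]}, t_0 Gz'_{[j]}\big) = (t - t_0)\sum_{i=j+1}^{n} c_i(z'_{[j]})\,\frac{\partial f}{\partial z_i}\big(z_{[j]}, \tau Gz'_{[j]}\big),
$$
where each $c_i$ is a linear form in $z'_{[j]}$ (an entry of $Gz'_{[j]}$). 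Taking $q$-th powers, integrating over $\mathbb C^n$, using the elementary inequality $(x_1 + \cdots + x_{n-j})^q \le C^q(x_1^q + \cdots + x_{n-j}^q)$, and applying Lemma \ref{lem-F1}(ii) to the slices $f(z_{[j]},\cdot) \in \calF^q(\mathbb C^{n-j})$ (legitimate by Lemma \ref{lem-Fpq}), one is left, for each $i$, with the product of a finite integral over $\mathbb C^{n-j}$ of a polynomial in $z'_{[j]}$ against $\exp\big(\tfrac q2|\tau Gz'_{[j]}|^2 - \tfrac q2|z'_{[j]}|^2\big)$ and the quantity $(q/2\pi)^j\int_{\mathbb C^j}\|f(z_{[j]},\cdot)\|_{n-j,q}^q\,e^{-q|z_{[j]}|^2/2}\,dA(z_{[j]}) = \|f\|_{n,q}^q \le (q/p)^n\|f\|_{n,p}^q \le (q/p)^n$. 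Because $\|\tau G\| \le \|G\| < 1$, the exponent in the first factor is $\le -\tfrac q2(1-\|G\|^2)|z'_{[j]}|^2$, so that integral converges to a constant independent of $t, t_0, \tau$ and $f$. Hence $\|C_{D_t} - C_{D_{t_0}}\| \le M|t-t_0|$ for a constant $M = M(n,p,q,G,j)$, and $t \mapsto C_{D_t}$ is Lipschitz-continuous.

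The only real work is the norm estimate just sketched. The point to stress is that, in contrast with Step 1 of Proposition \ref{prop-tp-com}, here $\|D_t\| = 1$ for every $t$ (the $I_j$ block is never shrunk), so the operators $C_{D_t}$ are not compact and the compactness argument is unavailable; one must instead differentiate along the slices in the last $n-j$ variables and control the resulting Gaussian integral over $\mathbb C^{n-j}$, where the hypothesis $\|G\| < 1$ is precisely what makes it converge, uniformly in the interpolation parameter. Everything else is a routine adaptation of the computation already carried out for Proposition \ref{prop-varphiA}.
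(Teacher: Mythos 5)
Your proposal is correct and follows essentially the same route as the paper's: the path $D_t$ you use is exactly the paper's linear interpolation $(1-t)A+tD$, and the Lipschitz estimate via the mean value inequality, Lemma \ref{lem-F1}(ii) applied to the slices $f(z_{[j]},\cdot)$, Lemma \ref{lem-Fpq}, and the convergent Gaussian integral over $\mathbb C^{n-j}$ is the same computation. The only difference is that the paper first conjugates by the unitary factors of the singular value decomposition of $G$ to make the contracting block diagonal before estimating, whereas you work with $G$ directly and control the linear forms $(Gz'_{[j]})_i$ by $\|G\|\,|z'_{[j]}|$ --- a harmless shortcut that changes nothing essential.
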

\begin{proof}
Put $\varphi_t(z) = (1-t) Az + tDz$ for each $t \in [0,1]$. Then $C_{\varphi_0} = C_A$ and $C_{\varphi_1} = C_D$, and by Theorem \ref{thm-co}, the operators $C_{\varphi_t}$ are bounded from $\calF^p(\CC^n)$ to $\calF^q(\CC^n)$ for all $t \in [0,1]$.
We show that the map 
$$
[0,1] \longrightarrow \calC(\calF^p(\mathbb C^n), \calF^q(\mathbb C^n)), t \longmapsto C_{\varphi_t},
$$
is continuous.

Let $G = V_1 \widetilde{G} U_1$ be the singular value decomposition of $G$ and put
$$
V = \begin{pmatrix}
I_j & 0 \\
0 & V_1
\end{pmatrix}
\text{ and }
U = \begin{pmatrix}
I_j & 0 \\
0 & U_1
\end{pmatrix}.
$$
Then $V$ and $U$ are $n \times n$ unitary matrices and put
$$
\widetilde{A} = V^* A U^* = \begin{pmatrix}
I_j & 0 \\
0 & 0
\end{pmatrix}
\text{ and }
\widetilde{D} = V^* D U^* = \begin{pmatrix}
I_j & 0 \\
0 & \widetilde{G}
\end{pmatrix}.
$$
Obviously, for each $t \in [0,1]$, $\widetilde{\varphi_{t\;}}(z) = (1 - t) \widetilde{A}z + t\widetilde{D}z$ and by \eqref{eq-w-nor}, $C_{\varphi_t} = C_{U}C_{\widetilde{\varphi_t}}C_{V}$ and $C_{\widetilde{\varphi_t}} = C_{U^*}C_{\varphi_t}C_{V^*}$.
It implies that
$\|C_{\widetilde{\varphi_t}} - C_{\widetilde{\varphi_{t_0}}}\| = \|C_{\varphi_t} - C_{\varphi_{t_0}}\|$ for all $t, t_0 \in [0,1]$.

Fix $t_0 \in [0,1]$. For every $t \in [0,1]$ and $f \in \calF^p(\mathbb C^n)$ with $\|f\|_{n,p} \leq 1$, using the fact that for each $z \in \CC^n$, there is a number $\tau = \tau(z)$ in between $t$ and $t_0$ such that
\begin{align*}
f(\widetilde{\varphi_t\; }(z)) - f(\widetilde{\varphi_{t_0}}(z)) & = 
f\big(z_{[j]}, t\widetilde{G}z'_{[j]}\big) - f\big(z_{[j]}, t_0\widetilde{G}z'_{[j]}\big)\\
& = (t - t_0) \sum_{i = j+1 }^n \widetilde{g}_{ii}z_i \dfrac{\partial f}{\partial z_i}\big(z_{[j]}, \tau \widetilde{G}z'_{[j]}\big),
\end{align*}
we get 
\begin{align*}
& \|C_{\widetilde{\varphi_t}}f - C_{\widetilde{\varphi_{t_0}}}f\|_{n,q}^q \\
= & \left( \dfrac{q}{2 \pi} \right)^n \int_{\mathbb C^n} \big|f(\widetilde{\varphi_t\; }(z)) - f(\widetilde{\varphi_{t_0}}(z))\big|^q e^{-\frac{q|z|^2}{2}}\; dA(z) \\
\leq &\; C^q |t-t_0|^q \left( \dfrac{q}{2 \pi} \right)^n \int_{\mathbb C^n}  \sum_{i = j+1 }^n \left|\widetilde{g}_{ii}z_i \dfrac{\partial f}{\partial z_i}\big(z_{[j]}, \tau \widetilde{G}z'_{[j]}\big) \right|^q e^{-\frac{q|z|^2}{2}}\; dA(z),
\end{align*}
for some constant $C> 0$.

Similarly to the proof of Proposition \ref{prop-varphiA}, for every $i = j+1,...,n$ and $z \in \CC^n$ fixed, using Lemma \ref{lem-F1}(ii), we have
\begin{align*}
\left|\dfrac{\partial f}{\partial z_i}\big(z_{[j]}, \tau \widetilde{G}z'_{[j]}\big) \right| &\leq e^2 \big(1 + |\tau \widetilde{g}_{ii} z_i|\big) e^{\frac{\big| \tau \widetilde{G}z'_{[j]} \big|^2 }{2}} \left\|f\big(z_{[j]}, \cdot\big)\right\|_{n-j, q}\\
& \leq e^2 \big(1 + |\widetilde{g}_{ii} z_i|\big) e^{\frac{\big|\widetilde{G}z'_{[j]} \big|^2 }{2}} \left\|f\big(z_{[j]}, \cdot\big)\right\|_{n-j, q}.
\end{align*}
From this and Lemma \ref{lem-Fpq}, for every $i = j+1, ..., n$, we obtain
\begin{align*}
&\left(\dfrac{q}{2\pi}\right)^{j} \int_{\mathbb C^j}\left|\dfrac{\partial f}{\partial z_i}\big(z_{[j]}, \tau \widetilde{G}z'_{[j]}\big) \right|^q e^{-\frac{q\left|z_{[j]}\right|^2}{2}}dA(z_{[j]}) \\
\leq & \; e^{2q} \big(1 + |\widetilde{g}_{ii} z_i|\big)^q e^{\frac{q\big|\widetilde{G}z'_{[j]} \big|^2}{2}} \left(\dfrac{q}{2\pi}\right)^{j} \int_{\mathbb C^{j}} \|f\big(z_{[j]}, \cdot\big)\|_{n-j, q}^q e^{-\frac{q\left|z_{[j]}\right|^2}{2}} dA(z_{[j]}) \\
= &\; e^{2q} \big(1 + |\widetilde{g}_{ii} z_i|\big)^q e^{\frac{q\big|\widetilde{G}z'_{[j]} \big|^2}{2}} \left\|f\right\|_{n,q}^q \\
\leq &\; e^{2q} \big(1 + |\widetilde{g}_{ii} z_i|\big)^q e^{\frac{q\left|\widetilde{G}z'_{[j]}\right|^2}{2}} \left( \dfrac{q}{p} \right)^n \|f\|_{n,p}^q \\
\leq &\; e^{2q} \left( \dfrac{q}{p} \right)^n \big(1 + |\widetilde{g}_{ii} z_i|\big)^q e^{\frac{q\left|\widetilde{G}z'_{[j]}\right|^2}{2}}. 
\end{align*}
Therefore, for every $t \in [0,1]$ and $f \in \calF^p(\CC^n)$ with $\|f\|_{n,p} \leq 1$, we get
\begin{align*}
& \|C_{\widetilde{\varphi_t}}f - C_{\widetilde{\varphi_{t_0}}}f\|_{n,q}^q \\
\leq &\; C^q |t-t_0|^q \left( \dfrac{q}{2 \pi} \right)^n \int_{\mathbb C^n}  \sum_{i = j+1 }^n \left|\widetilde{g}_{ii}z_i \dfrac{\partial f}{\partial z_i}\big(z_{[j]}, \tau \widetilde{G}z'_{[j]}\big) \right|^q e^{-\frac{q|z|^2}{2}}\; dA(z)\\
\leq &\; C^q e^{2q} |t-t_0|^q \left( \dfrac{q}{p} \right)^n \left( \dfrac{q}{2 \pi} \right)^{n-j} \\
& \ \ \ \times \int_{\mathbb C^{n-j}}  \sum_{i = j+1 }^n |\widetilde{g}_{ii}z_i|^q \big(1 + |\widetilde{g}_{ii} z_i|\big)^q e^{\frac{q\left|\widetilde{G}z'_{[j]}\right|^2}{2}} e^{-\frac{q\left|z'_{[j]}\right|^2}{2}}\; dA(z'_{[j]})\\
= &\; M^q |t-t_0|^q,
\end{align*}
where
\begin{align*}
M^q &= C^q e^{2q} \left( \dfrac{q}{p} \right)^n  \left( \dfrac{q}{2 \pi} \right)^{n-j}\\
\times & \sum_{i = j+1 }^n \int_{\mathbb C^{n-j}} |\widetilde{g}_{ii}z_i|^q \big(1 + |\widetilde{g}_{ii} z_i|\big)^q e^{\frac{q\left|\widetilde{G}z'_{[j]}\right|^2}{2}} e^{-\frac{q\left|z'_{[j]}\right|^2}{2}}\; dA(z'_{[j]}) < \infty,
\end{align*}
since $\|\widetilde{G}\| < 1$.

Consequently, 
$ \|C_{\varphi_t} - C_{\varphi_{t_0}}\|  = \|C_{\widetilde{\varphi_t}} - C_{\widetilde{\varphi_{t_0}}}\| \leq M|t-t_0|$ for all $t \in [0,1]$, 
which completes the proof.
\end{proof}

From these auxiliary results we can get necessary and sufficient conditions under which two composition operators belong to the same path connected component in the space $\calC(\calF^p(\mathbb C^n), \calF^q(\mathbb C^n))$.

\begin{thm}\label{thm-tp-eq-co}
Let $0 < p \leq q < \infty$ be given. Suppose that $C_{\varphi}$ and $C_{\phi}$ are two composition operators in $\calC(\calF^p(\mathbb C^n), \calF^q(\mathbb C^n))$ induced by $\varphi(z) = Az + b$ and $\phi(z) = Dz + e$. Then  $C_{\varphi}$ and $C_{\phi}$ are in the same path connected component of $\calC(\calF^p(\mathbb C^n), \calF^q(\mathbb C^n))$ if and only if $A \sim D$.
\end{thm}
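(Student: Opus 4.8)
The plan is to prove the two implications separately, using the auxiliary propositions already established. The hard direction (necessity) is immediate from Proposition \ref{prop-est-norm}: if $A \not\sim D$, then $\|C_{\varphi} - C_{\phi}\| \geq \tfrac{1}{2}$, and the same lower bound holds for any two composition operators $C_{\varphi'}$ with $\varphi'(z) = A'z+b'$ and $C_{\phi'}$ with $\phi'(z)=D'z+e'$ whenever $A' \in [A]$, $D' \in [D]$ and $[A]\neq[D]$; hence $C_{\varphi}$ and $C_{\phi}$ cannot be joined by a continuous path in $\calC(\calF^p(\CC^n),\calF^q(\CC^n))$, as such a path would have to leave every $\tfrac14$-ball at some point while staying in the union of the classes $\calC(n,p,q,[A])$ and their complements. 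More precisely, the function $C_{\varphi'} \mapsto A'$ (well-defined up to equivalence by Theorem \ref{thm-co}) is locally constant on $\calC(\calF^p(\CC^n),\calF^q(\CC^n))$ because of the $\tfrac12$-gap, so path components cannot cross between distinct classes $[A]$.

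**Sufficiency.**
For the converse, suppose $A \sim D$. The goal is to show $C_{\varphi} \sim C_{\phi}$ in $\calC(\calF^p(\CC^n),\calF^q(\CC^n))$. If $[A] = [0]$, i.e. $\|A\| < 1$ (equivalently $\|D\|<1$), then both operators are compact and the conclusion is Proposition \ref{prop-tp-com}. So assume $[A]=[D]$ is a nonzero class. By Lemma \ref{lem-sim} there are a number $j$ and a pair of unitary matrices $(V,U)$ so that every matrix in $[A]$ has the form $V\begin{pmatrix} I_j & 0 \\ 0 & G\end{pmatrix}U$ with $\|G\|<1$; write $A = V\begin{pmatrix} I_j & 0 \\ 0 & G_A\end{pmatrix}U$ and $D = V\begin{pmatrix} I_j & 0 \\ 0 & G_D\end{pmatrix}U$. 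First, by Proposition \ref{prop-varphiA}, $C_{\varphi} \sim C_A$ and $C_{\phi} \sim C_D$, so it suffices to connect $C_A$ to $C_D$. Next, using \eqref{eq-w-nor} the conjugation by the fixed unitaries $C_U(\cdot)C_V$ is an isometry of $\calC(\calF^p(\CC^n),\calF^q(\CC^n))$, so it is enough to connect $C_{\widetilde A}$ to $C_{\widetilde D}$ where $\widetilde A = \begin{pmatrix} I_j & 0 \\ 0 & G_A\end{pmatrix}$, $\widetilde D = \begin{pmatrix} I_j & 0 \\ 0 & G_D\end{pmatrix}$. By Proposition \ref{prop-tp-0} (applied once with $G = G_A$ and once with $G = G_D$), both $C_{\widetilde A}$ and $C_{\widetilde D}$ are path-connected to $C_{A_0}$, where $A_0 = \begin{pmatrix} I_j & 0 \\ 0 & 0\end{pmatrix}$. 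Chaining these paths, $C_A \sim C_{A_0} \sim C_D$, and therefore $C_{\varphi} \sim C_A \sim C_{A_0} \sim C_D \sim C_{\phi}$.

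**Main obstacle.**
The routine calculations (continuity of the relevant one-parameter families) have all been packaged into Propositions \ref{prop-tp-com}, \ref{prop-varphiA} and \ref{prop-tp-0}, so the remaining work is purely organizational: one must be careful that the unitary pair $(V,U)$ furnished by Lemma \ref{lem-sim} can be used simultaneously for $A$ and $D$ — this is exactly the content of Lemma \ref{lem-sim}, which says the class $[A]$ is precisely the set of all such matrices for one fixed $(V,U)$ and one fixed $j$. The only point requiring a little care is that when $j = n$ (so $\|A\|=1$ with full-rank unitary part), the block $G$ is absent and Proposition \ref{prop-tp-0} does not apply; but in that case $\widetilde A = \widetilde D = I_n$, so $C_{\widetilde A} = C_{\widetilde D}$ and there is nothing to prove. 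With that degenerate case dispatched, the chain of equivalences above completes the argument.
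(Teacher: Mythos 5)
Your proposal is correct and follows essentially the same route as the paper: necessity via the $\tfrac12$-gap of Proposition \ref{prop-est-norm} (the paper subdivides the path into steps of norm-distance less than $\tfrac12$, which is your locally-constant argument in different words), and sufficiency by reducing to $C_A\sim C_D$ with Proposition \ref{prop-varphiA}, conjugating by the common unitaries from Lemma \ref{lem-sim}, and chaining through $C_{A_0}$ via Proposition \ref{prop-tp-0}. Your explicit dispatch of the degenerate case $j=n$ is a small point the paper leaves implicit, but it does not change the argument.
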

\begin{proof}
\textbf{Necessity.} Suppose that $C_{\varphi} \sim C_{\phi}$ in $\calC(\calF^p(\mathbb C^n), \calF^q(\mathbb C^n))$. This means that there is a continuous path  in $\calC(\calF^p(\mathbb C^n), \calF^q(\mathbb C^n))$ connecting $C_{\varphi}$ and $C_{\phi}$. Then we can find a finite sequence of composition operators $(C_{\varphi_i})_{i=1}^m$ in $\calC(\calF^p(\mathbb C^n), \calF^q(\mathbb C^n))$ induced by $\varphi_i(z) = A^i z + b^i$ so that 
$$
C_{\varphi_1} = C_{\varphi} \text{ and } C_{\varphi_m} = C_{\phi} \text{ and } \|C_{\varphi_i} - C_{\varphi_{i+1}}\| < \frac{1}{2},
$$
for all  $i = 1, 2,..., m-1$.
From this and Proposition \ref{prop-est-norm}, it follows that $A^i \sim A^{i+1}$ for all $i = 1, 2,..., m-1$. Thus, $A \sim D$.

\textbf{Sufficiency.} Suppose now $A \sim D$. If one of two matrices $A$ and $D$ has norm less than $1$, then so does the other. Hence, $C_{\varphi}$ and $C_{\phi}$ are compact. From this and Proposition \ref{prop-tp-com} the assertion follows. 

Now consider the case $\|A\| = \|D\| = 1$. Then, by Lemma \ref{lem-sim}, there exist $n \times n$ unitary matrices $U$ and $V$ such that
$$
A = V \begin{pmatrix}
I_j & 0 \\
0 & G
\end{pmatrix}
U
\text{ and }
D = V \begin{pmatrix}
I_j & 0 \\
0 & G_1
\end{pmatrix}
U,
$$
where $G$ and $G_1$ are $(n-j) \times (n-j)$ matrices with $\|G\| < 1$ and $\|G_1\| < 1$.
Put 
$$
A_1 = \begin{pmatrix}
I_j & 0 \\
0 & G
\end{pmatrix}
,
D_1 = \begin{pmatrix}
I_j & 0 \\
0 & G_1
\end{pmatrix}
\text{ and } 
A_0 = \begin{pmatrix}
I_j & 0 \\
0 & 0
\end{pmatrix}.
$$
Then, by Proposition \ref{prop-tp-0}, $C_{A_1} \sim C_{A_0} \sim C_{D_1}$ in $\calC(\calF^p(\mathbb C^n), \calF^q(\mathbb C^n))$.
 
On the other hand, $C_A = C_U C_{A_1} C_V$ and $C_D = C_U C_{D_1} C_V$, hence, it is easy to see that $C_{A} \sim C_{D}$ in $\calC(\calF^p(\mathbb C^n), \calF^q(\mathbb C^n))$.
Indeed, if $\mathcal P_t$ is a continuous path connecting $C_{A_1}$ and $C_{D_1}$ in $\calC(\calF^p(\mathbb C^n), \calF^q(\mathbb C^n))$, then $C_U \mathcal P_t C_V$ is a continuous path in $\calC(\calF^p(\mathbb C^n), \calF^q(\mathbb C^n))$ connecting the operators $C_{A}$ and $C_{D}$.

Moreover, by Proposition \ref{prop-varphiA}, we get that $C_{\varphi} \sim C_A$ and $C_{\phi} \sim C_D$ in $\calC(\calF^p(\mathbb C^n), \calF^q(\mathbb C^n))$.

Following these statements we can complete the proof.
\end{proof}

Finally, we get the following complete description of (path) connected components in the space $\calC(\calF^p(\mathbb C^n), \calF^q(\mathbb C^n))$.

\begin{thm}\label{thm-tp-co}
Let $0 < p \leq q < \infty$. Then the space $\calC(\calF^p(\mathbb C^n), \calF^q(\mathbb C^n))$ has the following (path) connected components:
$$
\calC(\calF^p(\mathbb C^n), \calF^q(\mathbb C^n)) = \bigcup_{[A] \in [E_n]} \calC(n, p, q, [A]).
$$
In particular, if $A$ is a unitary matrix, then $C_A$ is an isolated point of $\calC(\calF^p(\mathbb C^n), \calF^q(\mathbb C^n))$.
\end{thm}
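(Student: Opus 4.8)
The plan is to obtain this theorem as a bookkeeping consequence of the results already established, the only subtle point being the passage from \emph{path connected} components to genuine \emph{connected} components.

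First I would note that, by Theorem \ref{thm-co}, every operator in $\calC(\calF^p(\mathbb C^n),\calF^q(\mathbb C^n))$ has the form $C_{\varphi}$ with $\varphi(z)=Az+b$, $A\in E_n$, and $\langle A\xi,b\rangle=0$ whenever $|A\xi|=|\xi|$; hence it belongs to exactly one class $\calC(n,p,q,[A])$, and the union $\bigcup_{[A]\in[E_n]}\calC(n,p,q,[A])$ is the whole space. Theorem \ref{thm-tp-eq-co} states precisely that $C_{\varphi}$ and $C_{\phi}$ lie in the same path connected component if and only if the inducing matrices are equivalent; therefore each $\calC(n,p,q,[A])$ is exactly one path connected component, and in particular is connected.

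Next I would upgrade this to ``connected component'' by invoking Proposition \ref{prop-est-norm}: whenever $[A]\neq[D]$, any $C_{\varphi}\in\calC(n,p,q,[A])$ and $C_{\phi}\in\calC(n,p,q,[D])$ satisfy $\|C_{\varphi}-C_{\phi}\|\geq\frac{1}{2}$. Consequently, inside $\calC(\calF^p(\mathbb C^n),\calF^q(\mathbb C^n))$ the open ball of radius $\frac{1}{2}$ about any point of $\calC(n,p,q,[A])$ is contained in $\calC(n,p,q,[A])$, so each $\calC(n,p,q,[A])$ is open; since its complement is the union of the remaining (open) classes, it is also closed. A clopen connected subset of a topological space is a connected component, which yields the stated decomposition into both connected and path connected components.

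Finally, for the isolated-point assertion I would take $A$ unitary. Then $|A\xi|=|\xi|$ for every $\xi\in\mathbb C^n$, so $A\sim D$ forces $A\xi=D\xi$ for all $\xi$, i.e. $[A]=\{A\}$; moreover the admissibility condition $\langle A\xi,b\rangle=0$ for all $\xi$, together with surjectivity of $A$, forces $b=0$. Hence $\calC(n,p,q,[A])=\{C_A\}$, and being a one-point connected component, $C_A$ is an isolated point of $\calC(\calF^p(\mathbb C^n),\calF^q(\mathbb C^n))$. I do not expect a genuine obstacle here; the one place that needs care is the step showing that the path connected components of Theorem \ref{thm-tp-eq-co} are actually connected components, and that is precisely where the uniform gap $\|C_{\varphi}-C_{\phi}\|\geq\frac{1}{2}$ from Proposition \ref{prop-est-norm} does the work.
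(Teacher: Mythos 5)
Your proposal is correct and follows essentially the same route as the paper: Theorem \ref{thm-tp-eq-co} identifies each $\calC(n,p,q,[A])$ as a full path component, Proposition \ref{prop-est-norm} makes each class open (hence clopen, hence a connected component), and unitarity of $A$ collapses $[A]$ to a singleton for the isolated-point claim. Your extra remark that $\langle A\xi,b\rangle=0$ for all $\xi$ together with surjectivity of $A$ forces $b=0$ is a welcome explicit justification of the step the paper leaves implicit.
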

\begin{proof}
Obviously, all sets $\calC(n,p,q,[A])$ with $[A] \in [E_n]$ are disjoint. Moreover, by Proposition \ref{prop-est-norm}, these sets $\calC(n,p,q,[A])$ are all open in the space $\calC(\calF^p(\mathbb C^n), \calF^q(\mathbb C^n))$.

On the other hand, by Theorem \ref{thm-tp-eq-co}, two arbitrary composition operators in $\calC(n,p,q,[A])$ belong to the same path connected component of $\calC(\calF^p(\mathbb C^n), \calF^q(\mathbb C^n))$. 

Consequently, every set $\calC(n,p,q,[A])$ with  $[A] \in [E_n]$ is a path connected component and, simultaneously, a connected component of the space $\calC(\calF^p(\mathbb C^n), \calF^q(\mathbb C^n))$. 

Moreover, if $A$ is a unitary matrix in $\calM_n$, then $[A] = \{A\}$, and hence, the set $\calC(n,p,q,[A])$ contains only the operator $C_A$. That is, $C_A$ is an isolated point in $\calC(\calF^p(\mathbb C^n), \calF^q(\mathbb C^n))$. 
\end{proof}

\begin{rem}
In the case $p = q =2$, our Theorem \ref{thm-tp-co} clarifies the corresponding main results in \cite{D-15} and gives an explicit description of all path connected components and isolated points in the space $\calC(\calF^2(\mathbb C^n), \calF^2(\mathbb C^n))$.
\end{rem}

\section{The space of nonzero weighted composition operators}

In this section we explicitly describe path connected components in the space $\calC_w(\calF^p(\mathbb C^n), \calF^q(\mathbb C^n))$ of all nonzero weighted composition operators acting from $\calF^p(\mathbb C^n)$ to $\calF^q(\mathbb C^n)$ under the operator norm topology. 
For two operators $W_{\psi, \varphi}$ and $W_{\chi,\phi}$ in $\calC_w(\calF^p(\mathbb C^n), \calF^q(\mathbb C^n))$, we write $W_{\psi, \varphi} \sim W_{\chi,\phi}$ in $\calC_w(\calF^p(\mathbb C^n), \calF^q(\mathbb C^n))$ if $W_{\psi, \varphi}$ and $W_{\chi,\phi}$ belong to the same path connected component of $\calC_w(\calF^p(\mathbb C^n), \calF^q(\mathbb C^n))$. 

For $p, q \in (0,\infty)$ and $\varphi(z) = Az + b$ with $A \in E_n$ we denote by $\calF(n,p,q,\varphi)$ the set of all nonzero functions $\psi \in \calF^q(\mathbb C^n)$ such that the operator $W_{\psi,\varphi}: \calF^p(\mathbb C^n) \to \calF^q(\mathbb C^n)$ is bounded. Then
$$
\calC_{w}(\calF^p(\mathbb C^n), \calF^q(\mathbb C^n))= \{W_{\psi,\varphi}: \varphi(z) = Az + b, A \in E_n, \psi \in \calF(n,p,q,\varphi)\}.
$$

Similarly to \cite[Lemma 4.8]{TK-17}, we have the following lemma.

\begin{lem}\label{lem-tp-psi}
Let $p, q \in (0,\infty)$, $\varphi(z) = Az + b$ with $\|A\| \leq 1$ and $\psi, \chi \in \calF(n,p,q,\varphi)$. Then the operators $W_{\psi,\varphi}$ and $W_{\chi, \varphi}$ belong to the same path connected component of $\calC_w(\calF^p(\mathbb C^n), \calF^q(\mathbb C^n))$.
\end{lem}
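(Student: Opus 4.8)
The plan is to exploit the linear structure of the set of admissible weights, and the only real subtlety — avoiding the zero symbol — is handled by a rotation trick. Fix $\varphi(z)=Az+b$ with $\norm{A}\le 1$ and set
$$
S=\set{\eta\in\calF^q(\CC^n):\ W_{\eta,\varphi}\colon\calF^p(\CC^n)\to\calF^q(\CC^n)\ \text{is bounded}}.
$$
First I would record two elementary facts. \emph{(1)} $S$ is a complex linear subspace of $\calF^q(\CC^n)$: for $\eta_1,\eta_2\in S$ and $\alpha,\beta\in\CC$ one has the identity $W_{\alpha\eta_1+\beta\eta_2,\varphi}=\alpha W_{\eta_1,\varphi}+\beta W_{\eta_2,\varphi}$, which is again bounded with $\norm{W_{\alpha\eta_1+\beta\eta_2,\varphi}}\le\abs{\alpha}\norm{W_{\eta_1,\varphi}}+\abs{\beta}\norm{W_{\eta_2,\varphi}}$. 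Thus $\calF(n,p,q,\varphi)=S\setminus\set{0}$, and $\psi,\chi$ are nonzero elements of $S$. \emph{(2)} Evaluating $W_{\eta,\varphi}$ at the constant function $1\in\calF^p(\CC^n)$ gives $W_{\eta,\varphi}1=\eta$, so $W_{\eta,\varphi}=0$ forces $\eta=0$; moreover, along a segment $\eta\mapsto W_{\eta,\varphi}$ is affine, i.e. $W_{(1-t)\eta_1+t\eta_2,\varphi}=(1-t)W_{\eta_1,\varphi}+tW_{\eta_2,\varphi}$, whence $t\mapsto W_{(1-t)\eta_1+t\eta_2,\varphi}$ is (Lipschitz) continuous in the operator norm.

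With this in hand it remains to join $\psi$ and $\chi$ by a path inside $S\setminus\set{0}$ along which $\eta\mapsto W_{\eta,\varphi}$ is norm-continuous; I would split into two cases. If $\chi$ is \emph{not} a negative real multiple of $\psi$, take the straight-line path $\psi_t=(1-t)\psi+t\chi$: it never vanishes, since $\psi_t=0$ with $t\in(0,1)$ would give $\chi=-\tfrac{1-t}{t}\psi$ with a positive coefficient, contrary to assumption, while $\psi_0=\psi\ne 0$ and $\psi_1=\chi\ne 0$. Hence $\psi_t\in S\setminus\set{0}$ for all $t$, and by fact \emph{(2)} the map $t\mapsto W_{\psi_t,\varphi}$ is a continuous path in $\calC_w(\calF^p(\CC^n),\calF^q(\CC^n))$ joining $W_{\psi,\varphi}$ and $W_{\chi,\varphi}$. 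If instead $\chi=-c\,\psi$ for some $c>0$, the straight segment passes through $0$ and must be avoided; here I would use the rotation–scaling path
$$
\psi_t=e^{i\pi t}\big((1-t)+tc\big)\psi,\qquad t\in[0,1],
$$
which satisfies $\psi_0=\psi$, $\psi_1=-c\psi=\chi$, lies in $S\setminus\set{0}$ (because $(1-t)+tc>0$ and $e^{i\pi t}\ne 0$), and yields $W_{\psi_t,\varphi}=e^{i\pi t}\big((1-t)+tc\big)W_{\psi,\varphi}$, a norm-continuous path. In either case $W_{\psi,\varphi}\sim W_{\chi,\varphi}$.

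I expect no serious obstacle here: the one point that requires care is that $\calC_w(\calF^p(\CC^n),\calF^q(\CC^n))$ by definition excludes the zero operator, so the naive linear homotopy from $\psi$ to $\chi$ is disqualified exactly when $\psi$ and $\chi$ are negatively proportional, and the rotation trick in the second case routes the path around the origin of the (complex, hence real-dimension at least two) subspace $S$. All the quantitative content — boundedness of every intermediate operator — comes for free from the linearity of $\eta\mapsto W_{\eta,\varphi}$ together with the hypothesis that $W_{\psi,\varphi}$ and $W_{\chi,\varphi}$ are bounded, so there is nothing to estimate.
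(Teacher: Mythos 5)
Your proof is correct and follows essentially the same route as the paper: the paper simply asserts the existence of a complex-valued continuous $\alpha(t)$ with $\alpha(0)=0$, $\alpha(1)=1$ for which $(1-\alpha(t))\psi+\alpha(t)\chi$ never vanishes, and connects $W_{\psi,\varphi}$ to $W_{\chi,\varphi}$ along $W_{u_t,\varphi}$. Your two-case construction (straight segment when $\chi$ is not a negative multiple of $\psi$, rotation--scaling otherwise) is exactly an explicit instance of such an $\alpha$, together with the linearity and norm-continuity observations the paper leaves implicit.
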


\begin{proof}
We can easily show that there exists a complex valued continuous function $\alpha(t)$ on $[0,1]$ such that $\alpha(0) = 0$, $\alpha(1) = 1$ and $u_t = (1 - \alpha(t))\psi_1 + \alpha(t) \psi_2$ are all nonzero functions in $\calF(n, p, q, \varphi)$. 

Then, the operators $W_{\psi,\varphi}$ and $W_{\chi, \varphi}$ are in the same path connected component of $\calC_w(\calF^p(\mathbb C^n), \calF^q(\mathbb C^n))$ via the continuous path $W_{u_t,\varphi}$.
\end{proof}

As in Section 3, we firstly study the following subset 
$$
\calC_{w,0}(\calF^p(\mathbb C^n), \calF^q(\mathbb C^n)) = \{W_{\psi,\varphi}: \varphi(z) = Az + b, \|A\| < 1, \psi \in \calF(n,p,q,\varphi)\}
$$
of the space $\calC_{w}(\calF^p(\mathbb C^n), \calF^q(\mathbb C^n))$.
\begin{prop}\label{prop-tp-wco-0}
 Let $p, q \in (0, \infty)$ be given. The set $\calC_{w,0}(\calF^p(\mathbb C^n), \calF^q(\mathbb C^n))$ is path connected in the space $\calC_w(\calF^p(\mathbb C^n), \calF^q(\mathbb C^n))$.
\end{prop}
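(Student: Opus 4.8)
The plan is to reduce everything to the composition-operator case already settled in Proposition~\ref{prop-tp-com}, using Lemma~\ref{lem-tp-psi} to pass between different weights attached to one fixed contractive symbol. The crucial observation is that whenever $\varphi(z) = Az + b$ with $\|A\| < 1$, the constant function $1$ is a nonzero element of $\calF^q(\CC^n)$ belonging to $\calF(n,p,q,\varphi)$: indeed $W_{1,\varphi} = C_\varphi$, which is bounded (in fact compact) from $\calF^p(\CC^n)$ to $\calF^q(\CC^n)$ by Theorem~\ref{thm-co}(b) when $p \leq q$ and by Theorem~\ref{thm-co-1} when $q < p$. Consequently $\calC_{w,0}(\calF^p(\CC^n),\calF^q(\CC^n))$ contains the whole set $\calC_0(\calF^p(\CC^n),\calF^q(\CC^n))$ of compact composition operators as a subset.

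The argument then proceeds in three steps. First, given $W_{\psi,\varphi}\in\calC_{w,0}$ with $\varphi(z)=Az+b$ and $\|A\|<1$, apply Lemma~\ref{lem-tp-psi} with the two weights $\psi$ and $\chi\equiv 1$: this yields a path $t\mapsto W_{u_t,\varphi}$ joining $W_{\psi,\varphi}$ to $C_\varphi$, and since the symbol $\varphi$ (hence the matrix $A$, with $\|A\|<1$) is unchanged along the path and every $u_t$ lies in $\calF(n,p,q,\varphi)$, this path stays inside $\calC_{w,0}$. Second, invoke Proposition~\ref{prop-tp-com}: the set $\calC_0(\calF^p(\CC^n),\calF^q(\CC^n))$ is path connected, and the connecting paths constructed there (the maps $t\mapsto C_{\varphi_t}$ with $\varphi_t(z)=tAz+b$ and $t\mapsto C_{\gamma_t}$ with $\gamma_t$ a line segment) consist entirely of \emph{compact} composition operators, hence lie in $\calC_{w,0}$. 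Third, concatenate: for any $W_{\psi,\varphi},W_{\chi,\phi}\in\calC_{w,0}$ we obtain, inside $\calC_{w,0}$,
\[
W_{\psi,\varphi}\ \sim\ C_\varphi\ \sim\ C_\phi\ \sim\ W_{\chi,\phi},
\]
which proves that $\calC_{w,0}(\calF^p(\CC^n),\calF^q(\CC^n))$ is path connected.

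There is no serious analytic obstacle here, precisely because the delicate operator-norm continuity estimates were already done in Proposition~\ref{prop-tp-com} and the weight-deformation continuity in Lemma~\ref{lem-tp-psi}; the one point that needs care is bookkeeping — checking at each stage that the operators and the connecting paths genuinely remain inside the \emph{subset} $\calC_{w,0}$ rather than merely inside the ambient space $\calC_w(\calF^p(\CC^n),\calF^q(\CC^n))$, i.e.\ that the matrix occurring always has norm strictly less than $1$ and that the weight is a legitimate element of the corresponding class $\calF(n,p,q,\cdot)$. As an alternative one could bypass Lemma~\ref{lem-tp-psi} and imitate Step~1 of Proposition~\ref{prop-tp-com} directly in the weighted setting, deforming $A$ to $0$ with $\psi$ held fixed and estimating $\|W_{\psi,\varphi_t}-W_{\psi,\varphi_{t_0}}\|$ through the normalization and Lemma~\ref{lem-F1}(ii) exactly as in Proposition~\ref{prop-varphiA}; but the route via Lemma~\ref{lem-tp-psi} is shorter and avoids re-deriving those estimates.
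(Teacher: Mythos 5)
Your proposal is correct and follows essentially the same route as the paper's own proof: connect $W_{\psi,\varphi}$ to $C_\varphi=W_{1,\varphi}$ via Lemma~\ref{lem-tp-psi}, connect $C_\varphi$ to $C_\phi$ via the compact-composition-operator paths of Proposition~\ref{prop-tp-com}, and concatenate, checking that every intermediate operator stays in $\calC_{w,0}$. Nothing further is needed.
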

\begin{proof}
Let $W_{\psi,\varphi}$ and $W_{\chi, \phi}$ be two weighted composition operators in $\calC_{w,0}(\calF^p(\mathbb C^n), \calF^q(\mathbb C^n))$. Then, by Theorems \ref{thm-co} and \ref{thm-co-1}, $W_{1,\varphi} = C_{\varphi}$ and $W_{1, \phi} = C_{\phi}$ are compact from $\calF^p(\mathbb C^n)$ to $\calF^q(\mathbb C^n)$.

By Proposition \ref{prop-tp-com}, $C_{\varphi} \sim C_{\phi}$ in $\calC(\calF^p(\mathbb C^n), \calF^q(\mathbb C^n))$ via a path in the set $\calC_0(\calF^p(\mathbb C^n), \calF^q(\mathbb C^n))$, and hence, $C_{\varphi} \sim C_{\phi}$ in $\calC_w(\calF^p(\mathbb C^n), \calF^q(\mathbb C^n))$ via a path in $\calC_{w,0}(\calF^p(\mathbb C^n), \calF^q(\mathbb C^n))$. 

On the other hand, by Lemma \ref{lem-tp-psi}, we can see that $W_{\psi,\varphi} \sim C_{\varphi}$ and $W_{\chi, \phi} \sim C_{\phi}$ in $\calC_w(\calF^p(\mathbb C^n), \calF^q(\mathbb C^n))$ via the paths of such type $W_{u_t, \varphi}$, and respectively, $W_{u_t, \phi}$ in $\calC_{w,0}(\calF^p(\mathbb C^n), \calF^q(\mathbb C^n))$. 

From these the desired assertion follows.
\end{proof}

Now for the case $0 < q < p < \infty$, from Proposition \ref{prop-tp-wco-0} we get the following result.

\begin{thm}\label{thm-tp-qp}
Let $0 < q < p <\infty$. The space $\calC_w(\calF^p(\mathbb C^n), \calF^q(\mathbb C^n))$ is path connected.
\end{thm}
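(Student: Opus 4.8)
The plan is to read off Theorem \ref{thm-tp-qp} from Proposition \ref{prop-tp-wco-0}. That proposition already shows that the set $\calC_{w,0}(\calF^p(\CC^n),\calF^q(\CC^n))$ of all nonzero weighted composition operators $W_{\psi,\varphi}$ whose symbol $\varphi(z)=Az+b$ satisfies $\|A\|<1$ is path connected in $\calC_w(\calF^p(\CC^n),\calF^q(\CC^n))$. So the whole matter reduces to checking that, when $0<q<p<\infty$, this subset exhausts the whole space, i.e. $\calC_w(\calF^p(\CC^n),\calF^q(\CC^n))=\calC_{w,0}(\calF^p(\CC^n),\calF^q(\CC^n))$.

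For this I would invoke the weighted analogue of Theorem \ref{thm-co-1} proved in \cite[Theorem~3.12]{TK-17-1}: when $0<q<p<\infty$, a weighted composition operator $W_{\psi,\varphi}$ with $\varphi(z)=Az+b$ is bounded from $\calF^p(\CC^n)$ to $\calF^q(\CC^n)$ (equivalently, compact) only if $\psi\in\calF^q(\CC^n)$ and $\|A\|<1$. Granting this, every element of $\calC_w(\calF^p(\CC^n),\calF^q(\CC^n))$ has $\|A\|<1$, the two sets coincide, and Proposition \ref{prop-tp-wco-0} gives path connectedness at once; no fresh topological argument is needed.

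The only substantive ingredient is therefore the implication ``$W_{\psi,\varphi}$ bounded $\Rightarrow\|A\|<1$'', and this is exactly where the strict drop of the exponent from $p$ to $q$ enters, just as for composition operators in Theorem \ref{thm-co-1}. If one wishes to reprove it here rather than quote \cite{TK-17-1}, I would argue by contradiction: assuming $\|A\|=1$, pass to the normalization $(\widetilde\psi,\widetilde\varphi)$, use Lemma \ref{lem-psiphi} to write $\widetilde\psi(z)=e^{-\langle z_{[j]},\widetilde b_{[j]}\rangle}\widetilde\psi_*(z'_{[j]})$ with $j=\max\{i:\widetilde a_{ii}=1\}\geq 1$, and then test $W_{\widetilde\psi,\widetilde\varphi}$ on the monomials $z_1^m$. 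A direct computation of the $\calF^p$- and $\calF^q$-norms of $z_1^m$ (the exponential weight $e^{-\langle z_{[j]},\widetilde b_{[j]}\rangle}$ contributing only subexponential corrections) shows that $\|W_{\widetilde\psi,\widetilde\varphi}(z_1^m)\|_{n,q}/\|z_1^m\|_{n,p}$ grows like a positive power of $m$, contradicting boundedness. Either way, once $\calC_w=\calC_{w,0}$ is established the theorem follows immediately.
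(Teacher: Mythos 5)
Your reduction is exactly the paper's: by Proposition \ref{prop-tp-wco-0} it suffices to show $\calC_w(\calF^p(\CC^n),\calF^q(\CC^n))=\calC_{w,0}(\calF^p(\CC^n),\calF^q(\CC^n))$, i.e.\ that boundedness forces $\|A\|<1$. Where you diverge is in how that implication is established, and one caveat is needed: \cite[Theorem~3.12]{TK-17-1} does not state ``bounded $\Rightarrow\|A\|<1$'' outright --- as this paper itself uses it, that theorem characterizes boundedness for $q<p$ by the membership $\ell_{z_{[s]}}(\widetilde{\psi},\widetilde{\varphi})\in L^{\frac{pq}{p-q}}(\CC^s,dA)$ --- so your primary route of simply quoting it would be citing precisely the fact that still has to be proved. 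The paper closes this gap by combining Lemma \ref{lem-psiphi} with that integrability criterion: if $\|A\|=1$, then $\ell_{z_{[s]}}(\widetilde{\psi},\widetilde{\varphi})$ is independent of $z_{[j]}$ and hence cannot lie in $L^{\frac{pq}{p-q}}(\CC^s,dA)$. Your fallback closes it differently and correctly: after normalizing one has $\widetilde{a}_{11}=1$ and $\widetilde{\psi}(z)=e^{-\langle z_{[j]},\widetilde{b}_{[j]}\rangle}\widetilde{\psi}_*(z'_{[j]})$, so testing on $z_1^m$ and separating variables (with the substitution $w=z_1+\widetilde{b}_1$, under which the exponential weight and the Gaussian combine into a constant) gives $\|W_{\widetilde{\psi},\widetilde{\varphi}}(z_1^m)\|_{n,q}=c\,\|z_1^m\|_{1,q}$ with $c>0$ independent of $m$, while Stirling's formula yields $\|z_1^m\|_{1,q}/\|z_1^m\|_{1,p}\asymp m^{\frac{1}{2q}-\frac{1}{2p}}\to\infty$; this contradicts boundedness without ever invoking the $L^{\frac{pq}{p-q}}$ criterion and is arguably more elementary and self-contained. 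So the proposal is correct, provided you actually carry out the monomial computation rather than lean on the citation alone.
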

\begin{proof}
In view of Proposition \ref{prop-tp-wco-0}, it is enough to prove that 
$$
\calC_w(\calF^p(\mathbb C^n), \calF^q(\mathbb C^n)) = \calC_{w,0}(\calF^p(\mathbb C^n), \calF^q(\mathbb C^n)).
$$
Let $W_{\psi,\varphi} \in \calC_w(\calF^p(\mathbb C^n), \calF^q(\mathbb C^n))$ with $\varphi(z) = Az + b$. If $A = 0$, then $W_{\psi,\varphi} \in \calC_{w,0}(\calF^p(\mathbb C^n), \calF^q(\mathbb C^n))$. Now suppose that $\textnormal{rank}A = s > 0$. 
It suffices to show that $\|A\| < 1$. By contradiction assume that $\|A\| = 1$. Then by Lemma \ref{lem-psiphi}, for the normalization $(\widetilde{\psi}, \widetilde{\varphi})$ of $(\psi, \varphi)$, we have
$$
\widetilde{\psi}(z) = e^{-\left\langle z_{[j]}, \widetilde{b}_{[j]}\right\rangle} \widetilde{\psi}_*(z'_{[j]}), \ z \in \mathbb C^n,
$$
where  $j = \max\{i: \widetilde{a}_{ii} = 1\}$ and $\widetilde{\psi}_*$ is a nonzero entire function of $z'_{[j]}$ on $\mathbb C^{n-j}$. Obviously, $1 \leq j \leq s$.
Then, for every $z \in \mathbb C^n$,
\begin{align*}
\ell_{z_{[s]}}(\widetilde{\psi}, \widetilde{\varphi}) = & \|\widetilde{\psi}(z_{[s]},\cdot)\|_{n-s, q}\; e^{\frac{\left|\widetilde{\varphi}(z)\right|^2 - \left|z_{[s]}\right|^2}{2}}\\
= & \|\widetilde{\psi}_*(z_{j+1},...,z_{s},\cdot)\|_{n-s, q}\; e^{\frac{\left|\widetilde{b}_{[j]}\right|^2}{2}} e^{\frac{\left|\widetilde{A}'_{[j]}z'_{[j]} + \widetilde{b}'_{[j]}\right|^2 - \left|(z_{j+1},...,z_s)\right|^2}{2}},
\end{align*}
by convention that $(z_{j+1},...,z_{s}) = \emptyset$ when $j = s$.
This shows that $\ell_{z_{[s]}}(\widetilde{\psi}, \widetilde{\varphi})$ does not depend on $z_{[j]}$, and hence, it cannot belong to the space $L^{\frac{pq}{p-q}}(\mathbb C^s, dA)$. 

On the other hand, by \cite[Theorem 3.12]{TK-17-1}, $\ell_{z_{[s]}}(\widetilde{\psi}, \widetilde{\varphi}) \in L^{\frac{pq}{p-q}}(\mathbb C^s, dA)$. 
This contradiction completes the proof.
\end{proof}

Next we focus on the case when $0 < p \leq q < \infty$ which is more complicated. We have the following result for the set $C_{w,0}(\calF^p(\mathbb C^n), \calF^q(\mathbb C^n))$.

\begin{prop}\label{prop-cl}
Let $0 < p \leq q < \infty$. The set $C_{w,0}(\calF^p(\mathbb C^n), \calF^q(\mathbb C^n))$ is closed in the space $C_{w}(\calF^p(\mathbb C^n), \calF^q(\mathbb C^n))$.
\end{prop}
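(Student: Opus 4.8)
The plan is to prove the equivalent statement that the complement of $\calC_{w,0}(\calF^p(\CC^n),\calF^q(\CC^n))$ in $\calC_{w}(\calF^p(\CC^n),\calF^q(\CC^n))$ -- that is, the set of bounded nonzero $W_{\psi,\varphi}$ with $\varphi(z)=Az+b$ and $\|A\|=1$ -- is open. Concretely, I would show that whenever $\|A\|=1$ there is a constant $\delta=\delta(\psi,\varphi)>0$ with $\|W_{\psi,\varphi}-W_{\chi,\phi}\|\ge\delta$ for \emph{every} bounded $W_{\chi,\phi}$ whose symbol matrix $D$ satisfies $\|D\|<1$. Granting this, if $W_{\psi_k,\varphi_k}\in\calC_{w,0}(\calF^p(\CC^n),\calF^q(\CC^n))$ converges in operator norm to $W_{\psi,\varphi}\in\calC_{w}(\calF^p(\CC^n),\calF^q(\CC^n))$ and the matrix $A$ of $\varphi$ had norm $1$, then taking $W_{\chi,\phi}=W_{\psi_k,\varphi_k}$ would give $\|W_{\psi,\varphi}-W_{\psi_k,\varphi_k}\|\ge\delta>0$ for all $k$, a contradiction; hence $\|A\|<1$, i.e.\ $W_{\psi,\varphi}\in\calC_{w,0}(\calF^p(\CC^n),\calF^q(\CC^n))$.

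The first step is the reproducing-kernel lower bound already used in Proposition \ref{prop-est-norm}. Since $\|k_w\|_{n,p}=1$, Lemma \ref{lem-F1}(i) applied in $\calF^q(\CC^n)$ gives $\|W_{\psi,\varphi}-W_{\chi,\phi}\|\ge\bigl|\psi(z)e^{\langle\varphi(z),w\rangle}-\chi(z)e^{\langle\phi(z),w\rangle}\bigr|e^{-(|z|^2+|w|^2)/2}$ for all $z,w$; specializing to $w=\varphi(z)$ and then $w=\phi(z)$ and adding yields, exactly as in Proposition \ref{prop-est-norm},
\[
2\|W_{\psi,\varphi}-W_{\chi,\phi}\|\ \ge\ m_z(\psi,\varphi)\Bigl(1-e^{-\frac{|\varphi(z)-\phi(z)|^2}{2}}\Bigr),\qquad z\in\CC^n.
\]
(The same inequality with $\chi\equiv0$ also shows $m(\psi,\varphi)\le\|W_{\psi,\varphi}\|<\infty$, which is what permits using Lemma \ref{lem-psiphi}.) It therefore suffices to find a complex line along which $m_z(\psi,\varphi)$ stays bounded below by a positive constant while $|\varphi(z)-\phi(z)|\to\infty$.

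This is where Lemma \ref{lem-psiphi} does the work. Since $\|A\|=1$ we have $\widetilde a_{11}=1$, so $j:=\max\{i:\widetilde a_{ii}=1\}\ge1$, and Lemma \ref{lem-psiphi} gives $\widetilde\psi(z)=e^{-\langle z_{[j]},\widetilde b_{[j]}\rangle}\widetilde\psi_*(z'_{[j]})$ with $\widetilde\psi_*\not\equiv0$ on $\CC^{n-j}$. Fix $\zeta_0\in\CC^{n-j}$ with $\widetilde\psi_*(\zeta_0)\ne0$, let $w_\lambda\in\CC^n$ have $(w_\lambda)_{[j]}=(\lambda,0,\dots,0)$ and $(w_\lambda)'_{[j]}=\zeta_0$, and set $z_\lambda=U^*w_\lambda$. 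From $\widetilde\psi=\psi\circ U^*$ and $\widetilde\varphi=V^*\!\circ\varphi\circ U^*$ (with $U,V$ unitary) one has $m_{z_\lambda}(\psi,\varphi)=m_{w_\lambda}(\widetilde\psi,\widetilde\varphi)$, and a short computation using $\widetilde a_{11}=1$ together with the above form of $\widetilde\psi$ -- the very cancellation already exploited in the proof of Theorem \ref{thm-tp-qp} -- shows the $\lambda$-dependence in the exponent disappears, so $m_{w_\lambda}(\widetilde\psi,\widetilde\varphi)=c_0>0$ is a constant depending only on $(\psi,\varphi)$ and $\zeta_0$. Meanwhile $AU^*e_1=V\widetilde Ae_1=Ve_1$, so $|AU^*e_1|=1>\|D\|\ge|DU^*e_1|$; hence $AU^*e_1\ne DU^*e_1$, and since $z_\lambda=\lambda U^*e_1+(\text{a fixed vector})$,
\[
\varphi(z_\lambda)-\phi(z_\lambda)=\lambda\bigl(AU^*e_1-DU^*e_1\bigr)+(\text{a fixed vector})
\]
has modulus tending to $\infty$ as $|\lambda|\to\infty$. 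Substituting $z=z_\lambda$ into the displayed estimate and letting $|\lambda|\to\infty$ gives $\|W_{\psi,\varphi}-W_{\chi,\phi}\|\ge c_0/2$, so $\delta=c_0/2$ works.

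The only place any genuine calculation enters is the cancellation showing $m_z(\psi,\varphi)$ is constant and positive along $\{z_\lambda\}$; everything else is a direct transcription of Proposition \ref{prop-est-norm}. The subtle point to watch is that $\delta=c_0/2$ must depend on $(\psi,\varphi)$ alone (it does, through $\widetilde\psi_*$, $\widetilde b$, and the choice of $\zeta_0$), so that a single $\delta$ serves against all competitors $W_{\chi,\phi}$ with $\|D\|<1$ simultaneously.
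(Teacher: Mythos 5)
Your proposal is correct, and it reaches the conclusion by a noticeably more streamlined route than the paper. The paper argues by contradiction on a convergent sequence, normalizes, writes $\widetilde\psi(z)=e^{-\langle z_{[j]},\widetilde b_{[j]}\rangle}\widetilde\psi_*(z'_{[j]})$ via Lemma \ref{lem-psiphi}, and then splits into two cases: if $\widetilde\psi_*(0'_{[j]})\neq 0$ it tests against kernels along the line $\lambda\xi$ with $\xi=(1_{[j]},0'_{[j]})$; if $\widetilde\psi_*$ vanishes at $0'_{[j]}$ it first conjugates by an auxiliary translation-type weighted composition operator $T$ (shifting $z'_{[j]}$ by a point $(z^0)'_{[j]}$ where $\widetilde\psi_*$ does not vanish, and checking that this rescales all the relevant operator norms by the constant $e^{|(z^0)'_{[j]}|^2/2}$) so as to reduce to the first case. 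You avoid this case distinction entirely by building the translation into the choice of test line: taking $z_\lambda=U^*w_\lambda$ with $(w_\lambda)_{[j]}=(\lambda,0,\dots,0)$ and $(w_\lambda)'_{[j]}=\zeta_0$ a point of nonvanishing of $\widetilde\psi_*$, the cross term $2\,\mathrm{Re}(\lambda\overline{\widetilde b_1})$ in $|\widetilde A w_\lambda+\widetilde b|^2$ cancels against $|\widetilde\psi(w_\lambda)|=e^{-\mathrm{Re}(\lambda\overline{\widetilde b_1})}|\widetilde\psi_*(\zeta_0)|$, so $m_{z_\lambda}(\psi,\varphi)$ is a positive constant $c_0$, while $|\varphi(z_\lambda)-\phi(z_\lambda)|\to\infty$ because $|AU^*e_1|=1>\|D\|\geq|DU^*e_1|$. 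Your symmetrized two-kernel estimate (as in Proposition \ref{prop-est-norm}) then gives the uniform gap $\|W_{\psi,\varphi}-W_{\chi,\phi}\|\geq c_0/2$ against every operator with $\|D\|<1$ at once, which is slightly stronger than the sequential statement the paper proves and makes openness of the complement explicit. What the paper's two-case version buys in exchange is the conjugation identity for the operator $T$, which it reuses elsewhere; your version buys brevity and a single uniform constant. Both arguments rest on the same two pillars, Lemma \ref{lem-psiphi} and the reproducing-kernel lower bound, so this is a genuine but modest reorganization rather than a new method.
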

\begin{proof}
Take an arbitrary sequence of weighted composition operators $(W_{\psi_i,\varphi_i})_i \subset C_{w,0}(\calF^p(\mathbb C^n), \calF^q(\mathbb C^n))$ converging to $W_{\psi,\varphi}$ in the space $C_{w}(\calF^p(\mathbb C^n), \calF^q(\mathbb C^n))$. Suppose that $\varphi(z) = A z + b$ and $\varphi_i(z) = A^i z + b^i$ with $\|A^i\| < 1$ for all $i \in \mathbb N$. 
It is enough to prove that $\|A\| < 1$. By contradiction assume that $\|A\| = 1$. 

Let $(\widetilde{\psi}, \widetilde{\varphi})$ be the normalization of $(\psi,\varphi)$ with respect to the singular value decomposition $A = V \widetilde{A} U$. Then by \eqref{eq-w-nor},
$$
W_{\psi,\varphi} = C_U W_{\widetilde{\psi},\widetilde{\varphi}} C_V \text{ and } W_{\widetilde{\psi},\widetilde{\varphi}} = C_{U^*}W_{\psi,\varphi}  C_{V^*}.
$$
We also put
$$
\psi_{i,1}(z) = \psi_i(U^*z) \text{ and } \varphi_{i,1}(z) = V^*A^iU^*z + V^*b^i, \ z \in \mathbb C^n,\ i \in \mathbb N.
$$
Then, for each $i \in \mathbb N$, similarly to \eqref{eq-w-nor} we have
$$
W_{\psi_{i},\varphi_{i}} = C_U W_{\psi_{i,1},\varphi_{i,1}} C_V \text{ and } W_{\psi_{i,1},\varphi_{i,1}} = C_{U^*} W_{\psi_{i},\varphi_{i}} C_{V^*}.
$$
From this, all operators $W_{\psi_{i, 1},\varphi_{i,1}}$ belong to $\calC_w(\calF^p(\mathbb C^n), \calF^q(\mathbb C^n))$, hence, by \cite[Proposition 3.1]{TK-17-1}, $m(\psi_{i,1},\varphi_{i,1}) < \infty$; moreover
$$
\|W_{\psi, \varphi} - W_{\psi_i,\varphi_i}\| = \|W_{\widetilde{\psi}, \widetilde{\varphi}} - W_{\psi_{i,1},\varphi_{i,1}}\|,
$$
for every $i \in \mathbb N$.

On the other hand, since $\|A\| = 1$, by Lemma \ref{lem-psiphi}, 
$$
\widetilde{\psi}(z) = e^{-\left\langle z_{[j]}, \widetilde{b}_{[j]} \right\rangle} \widetilde{\psi}_*(z'_{[j]}),
$$ where $j = \max \{i: \widetilde{a}_{ii} = 1\}$ and $ \widetilde{\psi}_*$ is a nonzero entire function of $z'_{[j]}$. 
We divide into two cases of $ \widetilde{\psi}_*$.

\textbf{Case 1.} Suppose that $\widetilde{\psi}_*(0'_{[j]}) \neq 0$. 
Then, for each $i \in \mathbb N$ and $z \in \mathbb C^n$, by Lemma \ref{lem-F1}(i), we have
\begin{align*}
& \|W_{\psi, \varphi} - W_{\psi_i,\varphi_i}\| = \|W_{\widetilde{\psi}, \widetilde{\varphi}} - W_{\psi_{i,1},\varphi_{i,1}}\|  \\
\geq & \|W_{\widetilde{\psi}, \widetilde{\varphi}}k_{\widetilde{\varphi}(z)} - W_{\psi_{i,1},\varphi_{i,1}}k_{\widetilde{\varphi}(z)}\|_{n,q} \\
\geq & \left|\widetilde{\psi}(z) e^{\frac{\left|\widetilde{\varphi}(z)\right|^2 - |z|^2}{2}} \right| - \left|\psi_{i,1}(z) e^{\frac{\left|\varphi_{i,1}(z)\right|^2 - |z|^2}{2}}e^{-\frac{\left|\widetilde{\varphi}(z)-\varphi_{i,1}(z)\right|^2}{2}} \right|\\
\geq & \left|\widetilde{\psi}_*(z'_{[j]})\right| e^{\frac{\left|\widetilde{b}_{[j]}\right|^2}{2}}e^{\frac{\left|\widetilde{A}'_{[j]}z'_{[j]} + \widetilde{b}'_{[j]}\right|^2 - \left|z'_{[j]}\right|^2}{2}} - m(\psi_{i,1},\varphi_{i,1})e^{-\frac{\left|\widetilde{\varphi}(z)-\varphi_{i,1}(z)\right|^2}{2}}.
\end{align*}
In particular, for $z = \lambda \xi$ with $\xi= (1_{[j]},0'_{[j]})$ and $\lambda \in \mathbb C$, the last inequality means that
\begin{align*}
\|W_{\psi, \varphi} - W_{\psi_i,\varphi_i}\|  \geq \left|\widetilde{\psi}_*(0'_{[j]})\right| e^{\frac{\left|\widetilde{b}\right|^2}{2}} - m(\psi_{i,1},\varphi_{i,1})e^{-\frac{\left|\widetilde{\varphi}(\lambda \xi)-\varphi_{i,1}(\lambda \xi)\right|^2}{2}}.
\end{align*}

On the other hand, it is easy to see that $|\widetilde{A}\xi| = |\xi|$ and $|V^*A^iU^*\xi| < |\xi|$, then
$$
|\widetilde{\varphi}(\lambda \xi)-\varphi_{i,1}(\lambda \xi)| \geq |\lambda| (|\xi| - |V^*A^iU^*\xi|) - |\widetilde{b} - V^*b^i| \to + \infty \text{ as } \lambda \to \infty.
$$
Consequently,
$ \|W_{\psi, \varphi} - W_{\psi_i,\varphi_i}\| \geq \left|\widetilde{\psi}_*(0'_{[j]})\right| e^{\frac{\left|\widetilde{b}\right|^2}{2}}$ for all $i \in \mathbb N$.
This means that the sequence $W_{\psi_i,\varphi_i}$ cannot converge to $W_{\psi, \varphi}$ in the space $C_{w}(\calF^p(\mathbb C^n), \calF^q(\mathbb C^n))$, which is a contradiction.

\textbf{Case 2.} Suppose that $\widetilde{\psi}_*((z^0)'_{[j]}) \neq 0$ for some point $z^0 \in \mathbb C^n$ with $(z^0)'_{[j]} \neq 0'_{[j]}$ in $\CC^{n-j}$. We put
$$
\widetilde{\varphi}_0(z_{[j]},z'_{[j]}) = \widetilde{\varphi}\big(z_{[j]},z'_{[j]}+ (z^0)'_{[j]}\big), 
$$
and
\begin{align*}
\widetilde{\psi}_0(z_{[j]},z'_{[j]}) &= \widetilde{\psi}\big(z_{[j]},z'_{[j]} + (z^0)'_{[j]}\big)e^{-\left\langle z'_{[j]},(z^0)'_{[j]} \right\rangle}\\
&= \widetilde{\psi}_*\big(z'_{[j]} + (z^0)'_{[j]}\big) e^{-\left\langle z'_{[j]},(z^0)'_{[j]}\right\rangle -\left\langle z_{[j]}, \widetilde{b}_{[j]} \right\rangle} \\
&= (\widetilde{\psi}_0)_*(z'_{[j]})e^{-\left\langle z_{[j]}, \widetilde{b}_{[j]} \right\rangle},
\end{align*}
where $(\widetilde{\psi}_0)_*(z'_{[j]}) = \widetilde{\psi}_*\big(z'_{[j]} + (z^0)'_{[j]}\big) e^{-\left\langle z'_{[j]},(z^0)'_{[j]}\right\rangle}, \ z \in \mathbb C^n$.

Similarly, for each $i \in \mathbb N$ we define
$$
\varphi_{i,1, 0}(z_{[j]},z'_{[j]}) = \varphi_{i,1}\big(z_{[j]},z'_{[j]}+ (z^0)'_{[j]}\big),
$$
and 
$$
\psi_{i,1,0}(z_{[j]},z'_{[j]}) = \psi_{i,1}\big(z_{[j]},z'_{[j]} + (z^0)'_{[j]}\big)e^{-\left\langle z'_{[j]},(z^0)'_{[j]} \right\rangle}, \ z \in \mathbb C^n. 
$$
Then, all operators $W_{\psi_{i, 1, 0},\varphi_{i, 1, 0}}, \ i \in \mathbb N$, and $W_{\widetilde{\psi}_0,\widetilde{\varphi}_0}$ belong to the space $C_{w}(\calF^p(\mathbb C^n), \calF^q(\mathbb C^n))$. Indeed, we give the proof for $W_{\widetilde{\psi}_0,\widetilde{\varphi}_0}$ (similarly for $W_{\psi_{i, 1, 0},\varphi_{i, 1, 0}}$). For each $f \in \calF^p(\mathbb C^n)$ we have
\begin{align*}
 &\|W_{\widetilde{\psi}_0,\widetilde{\varphi}_0}f\|_{n,q}^q \\ \nonumber
=& \left(\dfrac{q}{2\pi}\right)^{n}\int_{\mathbb C^n} \left|\widetilde{\psi}\big(z_{[j]},z'_{[j]} + (z^0)'_{[j]}\big)e^{-\left\langle z'_{[j]},(z^0)'_{[j]} \right\rangle} f\big(\widetilde{\varphi}\big(z_{[j]},z'_{[j]}+ (z^0)'_{[j]}\big)\big)\right|^q \\ \nonumber
& \quad \quad \quad \quad \quad \quad \times e^{-\frac{q\left|(z_{[j]},z'_{[j]})\right|^2}{2}} dA(z_{[j]},z'_{[j]})\\ \nonumber
=&\; e^{\frac{q\left|(z^0)'_{[j]}\right|^2}{2}} \left(\dfrac{q}{2\pi}\right)^{n}\int_{\mathbb C^n} \left|\widetilde{\psi}\big(z_{[j]},z'_{[j]} + (z^0)'_{[j]}\big)  f\big(\widetilde{\varphi}\big(z_{[j]},z'_{[j]}+ (z^0)'_{[j]}\big)\big)\right|^q \\  \nonumber
& \quad \quad \quad \quad \quad \quad  \quad \quad \quad \times e^{-\frac{q\left|\left(z_{[j]},z'_{[j]} + (z^0)'_{[j]}\right)\right|^2}{2}} dA(z_{[j]},z'_{[j]})\\  \nonumber
=&\; e^{\frac{q\left|(z^0)'_{[j]}\right|^2}{2}} \left(\dfrac{q}{2\pi}\right)^{n}\int_{\mathbb C^n} \left|\widetilde{\psi}(z_{[j]},z'_{[j]})  f\big(\widetilde{\varphi}(z_{[j]},z'_{[j]})\big) \right|^q e^{-\frac{q\left|(z_{[j]},z'_{[j]})\right|^2}{2}} dA(z_{[j]},z'_{[j]})\\  \nonumber
=&\; e^{\frac{q\left|(z^0)'_{[j]}\right|^2}{2}} \|W_{\widetilde{\psi},\widetilde{\varphi}}f\|_{n,q}^q.
\end{align*}
That is, $\|W_{\widetilde{\psi}_0,\widetilde{\varphi}_0}f\|_{n,q} =  e^{\frac{\left|(z^0)'_{[j]}\right|^2}{2}} \|W_{\widetilde{\psi},\widetilde{\varphi}}f\|_{n,q}$ for every $f \in \calF^p(\mathbb C^n)$.

Repeating this argument for $W_{\widetilde{\psi}_0,\widetilde{\varphi}_0} - W_{\psi_{i, 1, 0},\varphi_{i, 1, 0}}$, we can get
\begin{align*}
\|W_{\widetilde{\psi}_0,\widetilde{\varphi}_0}f - W_{\psi_{i, 1, 0},\varphi_{i, 1, 0}}f\|_{n,q} = e^{\frac{\left|(z^0)'_{[j]}\right|^2}{2}} \|W_{\widetilde{\psi},\widetilde{\varphi}}f - W_{\psi_{i, 1},\varphi_{i, 1}}f\|_{n,q},
\end{align*}
for each $f \in \calF^p(\mathbb C^n)$.
It implies that 
$$
\|W_{\widetilde{\psi}_0,\widetilde{\varphi}_0} - W_{\psi_{i, 1, 0},\varphi_{i, 1, 0}}\| = e^{\frac{\left|(z^0)'_{[j]}\right|^2}{2}} \|W_{\widetilde{\psi},\widetilde{\varphi}} - W_{\psi_{i, 1},\varphi_{i, 1}}\|, \ \forall i \in \mathbb N.
$$

Moreover, applying Case 1 to the norm $\|W_{\widetilde{\psi}_0,\widetilde{\varphi}_0} - W_{\psi_{i, 1, 0},\varphi_{i, 1, 0}}\|$ with $(\widetilde{\psi}_0)_*(0'_{[j]}) = \widetilde{\psi}_*((z^0)'_{[j]}) \neq 0$, we have
$$
\|W_{\widetilde{\psi}_0, \widetilde{\varphi}_0} - W_{\psi_{i,1,0},\varphi_{i,1,0}}\| \geq \left|(\widetilde{\psi}_0)_*(0'_{[j]})\right| e^{\frac{\left|\widetilde{b_0}\right|^2}{2}} = \left|\widetilde{\psi}_*((z^0)'_{[j]})\right| e^{\frac{\left|\widetilde{b_0}\right|^2}{2}}, \ \forall i \in \mathbb N,
$$
where $\widetilde{b_0} = \widetilde{\varphi}_0(0) = \widetilde{\varphi}(0_{[j]},(z_0)'_{[j]})$. 
Consequently, for each $i \in \mathbb N$,
\begin{align*}
\|W_{\psi,\varphi} - W_{\psi_{i},\varphi_{i}}\| & = \|W_{\widetilde{\psi},\widetilde{\varphi}} - W_{\psi_{i, 1},\varphi_{i, 1}}\| \\
& = e^{-\frac{\left|(z^0)'_{[j]}\right|^2}{2}} \|W_{\widetilde{\psi}_0,\widetilde{\varphi}_0} - W_{\psi_{i, 1, 0},\varphi_{i, 1, 0}}\| \\
& \geq \left|\widetilde{\psi}_*((z^0)'_{[j]})\right|e^{\frac{\left|\widetilde{b_0}\right|^2}{2}} e^{-\frac{\left|(z^0)'_{[j]}\right|^2}{2}}  > 0.
\end{align*}
It means that the sequence $W_{\psi_i,\varphi_i}$ cannot converge to $W_{\psi, \varphi}$ in the space $C_{w}(\calF^p(\mathbb C^n), \calF^q(\mathbb C^n))$, which is a contradiction.
\end{proof}

Now we fix a nonzero equivalence class $[A] \in [E_n]$. Let $j \in \NN$ and a pair of $n \times n$ unitary matrices $(V,U)$ be defined as in Lemma \ref{lem-sim}. We say that two vectors $b^1$ and $b^2$ in $\mathbb C^n$ are \textit{equivalent by the class $[A]$}, briefly, write $b^1 \sim b^2$ by $[A]$, if $(V^*b^1)_{[j]} = (V^*b^2)_{[j]}$. 
Note that this is an equivalence relation on $\mathbb C^n$ and its definition does not depend on the choice of the pair of unitary matrics $(V,U)$ in Lemma \ref{lem-sim}. Indeed, suppose that $(\widehat{V},\widehat{U})$ is another pair of unitary matrices in Lemma \ref{lem-sim}. Then, $(V, U)$ and $(\widehat{V},\widehat{U})$ are related by \eqref{eq-uv} in Remark \ref{rem-sim}.

From this, it is easy to see that for every vector $b \in \mathbb C^n$
\begin{align*}
\widehat{V}^*b = V_0 (H_1 \oplus ... \oplus H_d \oplus W_1)^* V^* b 
= V_0 (H_1^* \oplus ... \oplus H_d^* \oplus W_1^*)V^*b,
\end{align*}
where, recall that $(V_0, U_0)$ are defined in \eqref{eq-uv0} and for each $1 \leq i \leq d$, $H_i$ is an $n_i \times n_i$ unitary matrix, and $W_1$, $W_2$ are $(n-s) \times (n-s)$ unitary matrices in \eqref{eq-uv} with $n_1 = j$.
Hence, by \eqref{eq-uv}, 
$$
(\widehat{V}^*b)_{[j]} = ((H_1^* \oplus ... \oplus H_d^* \oplus W_1^*)V^*b)_{[j]} = H_1^* (V^*b)_{[j]}.
$$
This relation and the unitary property of $H_1$ imply that
$$
(\widehat{V}^*b^1)_{[j]} = (\widehat{V}^*b^2)_{[j]} \text{ if and only if } (V^*b^1)_{[j]} = (V^*b^2)_{[j]}.
$$
We denote by $[\mathbb C^n]_{[A]}$ the set of all equivalence classes on $\mathbb C^n$ by the above-defined equivalence relation. Since for the zero class $[0] \in [E_n]$, the number $j = 0$, we can suppose that $[\mathbb C^n]_{[0]}$ has only one equivalence class $[0]$ which is the whole space $\mathbb C^n$.

By each $[A] \in [E_n]$ and each $[b] \in [\mathbb C^n]_{[A]}$ we define the following set
$$
\calW([A], [b]) = \{W_{\psi,\varphi}: \varphi(z) = Az + b, A \in [A], b \in [b], \psi \in \calF(n,p,q,\varphi)\}.
$$
Then, by Propositions \ref{prop-tp-wco-0} and \ref{prop-cl}, the set 
\begin{align*}
\calW([0], [0]) &= \{W_{\psi,\varphi}: \varphi(z) = Az + b, A \in [0], b \in \mathbb C^n, \psi \in \calF(n,p,q,\varphi) \}\\
&= \calC_{w,0}(\calF^p(\mathbb C^n), \calF^q(\mathbb C^n))
\end{align*}
is path connected and closed in $\calC_{w}(\calF^p(\mathbb C^n), \calF^q(\mathbb C^n))$.

In view of this, we will prove that this statement is also valid for all sets $\calW([A], [b])$. Then we can get a complete description of all (path) connected components of the space $\calC_{w}(\calF^p(\mathbb C^n), \calF^q(\mathbb C^n))$.

\begin{prop} \label{prop-main-tp-wco}
	Let $ 0 < p \leq q < \infty$.
For each  $[A] \in [E_n]$ and each $[b] \in [\mathbb C^n]_{[A]}$, the set $\calW([A],[b])$ is closed and path connected in $\calC_{w}(\calF^p(\mathbb C^n), \calF^q(\mathbb C^n))$.
\end{prop}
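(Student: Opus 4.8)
The plan is to dispose of the trivial class, reduce everything by conjugation to a concrete model, and prove both properties there. Since $\calW([0],[0])$ coincides with $\calC_{w,0}(\calF^p(\mathbb C^n),\calF^q(\mathbb C^n))$, the case $[A]=[0]$ is exactly Propositions \ref{prop-tp-wco-0} and \ref{prop-cl}; so assume $[A]\neq[0]$, so that $\|A\|=1$ for every $A\in[A]$. Fix the integer $j$ and the pair of unitary matrices $(V,U)$ attached to $[A]$ by Lemma \ref{lem-sim}. By \eqref{eq-w-nor} the map $W\mapsto C_{U^*}WC_{V^*}$ is an isometric homeomorphism of $\calC_w(\calF^p(\mathbb C^n),\calF^q(\mathbb C^n))$, and it carries $\calW([A],[b])$ bijectively onto the model set $\calW_0$ of all $W_{\sigma,\rho}$ with $\rho(z)=\mathrm{diag}(I_j,G)z+c$, $\|G\|<1$, $c_{[j]}$ equal to a fixed vector $c^0$ determined by $[b]$, and $\sigma\in\calF(n,p,q,\rho)$; here one uses that $b^1\sim b^2$ by $[A]$ means $(V^*b^1)_{[j]}=(V^*b^2)_{[j]}$, which is well defined by Remark \ref{rem-sim}. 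Thus it suffices to prove $\calW_0$ is closed and path connected. Since $\|\mathrm{diag}(I_j,G)\|=1$ and $W_{\sigma,\rho}$ is bounded, a singular value decomposition of $\mathrm{diag}(I_j,G)$ has block-diagonal unitary factors of the form $I_j\oplus(\cdot)$, so the exponential factor in Lemma \ref{lem-psiphi} involves only the first $j$ variables; hence every admissible weight factors as $\sigma(z)=e^{-\langle z_{[j]},c^0\rangle}\,\sigma_*(z'_{[j]})$ with $\sigma_*$ a nonzero entire function on $\mathbb C^{n-j}$.

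For path connectedness I would connect an arbitrary $W_{\sigma,\rho}\in\calW_0$ to the single base point $W_{\sigma^0,\rho^0}$, where $\sigma^0(z)=e^{-\langle z_{[j]},c^0\rangle}$ and $\rho^0(z)=\mathrm{diag}(I_j,0)z+(c^0,0)$, which depend only on $[A]$ and $[b]$. First one checks, by passing to the normalization of $\rho$ and estimating the quantity $\ell$ from \eqref{eq-normf} (the weight $e^{-\langle z_{[j]},c^0\rangle}$ cancels exactly the growth coming from the non-decaying first $j$ directions, while $\|G\|<1$ controls the remaining ones), that $\sigma^0\in\calF(n,p,q,\rho)$; hence Lemma \ref{lem-tp-psi} connects $W_{\sigma,\rho}$ to $W_{\sigma^0,\rho}$. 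Then one deforms $\rho$ along $\rho_t(z)=\bigl(z_{[j]}+c^0,\,(1-t)(Gz'_{[j]}+c'_{[j]})\bigr)$, keeping the weight $\sigma^0$; the same estimate shows each $W_{\sigma^0,\rho_t}\in\calW_0$, and $t\mapsto W_{\sigma^0,\rho_t}$ is norm-continuous by the argument of Propositions \ref{prop-varphiA} and \ref{prop-tp-0}: differentiate $f\circ\rho_t$ in $t$, apply Lemmas \ref{lem-F1}(ii), \ref{lem-F} and \ref{lem-Fpq}, and use $\|G\|<1$ to make the resulting Gaussian integral finite. The only new feature is the factor $e^{-\langle z_{[j]},c^0\rangle}$ in the integrand, which after a shift of the first $j$ variables combines with $e^{-q|z_{[j]}|^2/2}$ to leave a fixed constant. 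Since $\rho_1=\rho^0$, every operator in $\calW_0$ lies in one path component.

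For closedness, suppose $W_{\sigma_i,\rho_i}\to W_{\sigma,\rho}$ in $\calC_w(\calF^p(\mathbb C^n),\calF^q(\mathbb C^n))$ with all $W_{\sigma_i,\rho_i}\in\calW_0$, and write $\rho(z)=Dz+e$; the boundedness criterion of \cite{TK-17-1} already forces $D\in E_n$ and $0\neq\sigma\in\calF^q(\mathbb C^n)$, so it remains to see that $D=\mathrm{diag}(I_j,G)$ with $\|G\|<1$ and $e_{[j]}=c^0$. If this fails, then after passing to a normalization and using Lemma \ref{lem-psiphi} one exhibits a vector $\xi$ (independent of $i$) and, possibly after a translation, a complex line along which $|\rho-\rho_i|\to\infty$ while the relevant value of $m_z(\sigma,\rho)$ (or of $m_z(\sigma_i,\rho_i)$) is a fixed positive constant; inserting the normalized kernels $k_w$ with $w=\rho(\cdot)$ or $w=\rho_i(\cdot)$ into $\|W_{\sigma,\rho}-W_{\sigma_i,\rho_i}\|$, exactly as in Propositions \ref{prop-est-norm} and \ref{prop-cl}, yields $\|W_{\sigma,\rho}-W_{\sigma_i,\rho_i}\|\ge\delta>0$ for all $i$, contradicting convergence. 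The sub-cases according to whether the free factor $\sigma_*$ (or $\sigma$) vanishes at the chosen point are handled by the translation trick of Case~2 of Proposition \ref{prop-cl}.

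The real obstacle here is the closedness step: one must rule out every way the limiting linear part $D$ can degenerate — a unit singular direction slipping out of the first $j$ coordinates, too few unit singular values, or the shift leaving $[b]$ — and convert each degeneration into a uniform lower bound on $\|W_{\sigma,\rho}-W_{\sigma_i,\rho_i}\|$, with the possible vanishing of $\sigma_*$ forcing an extra translation argument in each case. The path-connectedness estimates, though lengthy, are essentially a transcription of those in Section 3 with the harmless additional weight $e^{-\langle z_{[j]},c^0\rangle}$.
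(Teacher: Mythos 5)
Your reduction to the model set after conjugating by $C_{U^*}$, $C_{V^*}$, and your path-connectedness argument (change the weight by Lemma \ref{lem-tp-psi}, then deform the linear part and the tail of the shift, with the extra factor $e^{-\langle z_{[j]},c^0\rangle}$ absorbed into the Gaussian) are sound and close in spirit to the paper's Steps 1 and 3; the paper instead factors the weight out as an explicit invertible operator $T_{(V^*b^0)_{[j]}}$ and then quotes Theorem \ref{thm-tp-eq-co} for composition operators, which saves redoing the continuity estimates. The problem is the closedness step, where your proposed mechanism does not cover all the degenerations you yourself list. First, if the limit symbol is $\phi(z)=Dz+e$ with $D$ still of the form $\mathrm{diag}(I_j,G)$, $\|G\|<1$, but $e_{[j]}\neq c^0$ (``the shift leaving $[b]$''), then $\rho_i(z)-\phi(z)=\bigl(c^0-e_{[j]},\,(G^i-G)z'_{[j]}+\cdots\bigr)$ stays \emph{bounded} along every ray $\lambda\xi$ with $\xi'_{[j]}=0$, so there is no complex line along which $|\rho_i-\phi|\to\infty$; the reproducing-kernel lower bound of Propositions \ref{prop-est-norm} and \ref{prop-cl} then only yields $\|W_{\sigma_i,\rho_i}-W_{\sigma,\phi}\|\ge m_{\lambda\xi}(\sigma,\phi)-m_{\lambda\xi}(\sigma_i,\rho_i)e^{-|c^0-e_{[j]}|^2/2}$ with a cross term that does not vanish, and no positive lower bound follows. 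Second, in the degenerations where the limit has ``fewer'' unit singular directions than the approximants (e.g.\ $D_{11}\neq I_j$, or $\|D\|<1$), the only ray along which the symbols separate is a unit direction of $\rho_i$, not of $\phi$, so the surviving constant in the lower bound is $m_{\lambda\xi}(\sigma_i,\rho_i)$, which depends on $i$ and can a priori tend to $0$ (or even vanish for the particular slice chosen, if $\sigma_{i,*}$ happens to vanish there); Proposition \ref{prop-cl} avoids this precisely because there the roles are reversed and the constant $\bigl|\widetilde{\psi}_*(0'_{[j]})\bigr|e^{|\widetilde b|^2/2}$ comes from the \emph{limit} operator. Making this uniform requires an extra input (e.g.\ comparability of $\|W_{\sigma_i,\rho_i}\|$ with $m(\sigma_i,\rho_i)$ together with $\|W_{\sigma_i,\rho_i}\|\to\|W_{\sigma,\phi}\|>0$, and an $i$-dependent choice of the transverse slice), none of which appears in your sketch.

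The paper closes these gaps by an entirely different mechanism in its Step 2: from norm convergence it extracts pointwise convergence $W_{\psi_{i,1,*},\varphi_{i,1,*}}f(z)\to W_{\chi_{1,*},\phi_{1,*}}f(z)$ and tests against $f\equiv1$, against $f$ depending only on $z_{[j]}$, and against $f$ depending only on $z'_{[j]}$. This forces $D_{11}=I_j$, $D_{12}=0$, $D_{21}=0$ and $(e^1)_{[j]}=0$ by purely algebraic identities (e.g.\ $f\bigl((D^1z+e^1)_{[j]}\bigr)=f(z_{[j]})$ for all such $f$), and the remaining claim $\|D_{22}\|<1$ is obtained by restricting to functions of $z'_{[j]}$ and invoking the already-proved closedness of $\calC_{w,0}\bigl(\calF^p(\CC^{n-j}),\calF^q(\CC^{n-j})\bigr)$ from Proposition \ref{prop-cl}. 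You would need either to adopt this test-function argument or to supply, case by case, the uniform lower bounds sketched above; as written, the closedness half of your proof is incomplete.
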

\begin{proof}
By Propositions \ref{prop-tp-wco-0} and \ref{prop-cl}, it suffices to prove for nonzero equivalence classes $[A] \in [E_n]$.
For the reader's convenience we will divide the proof into the following several steps.

Fix a matrix $A^0 \in [A]$ and a vector $b^0 \in [b]$. As in the proof of Lemma \ref{lem-sim}, let $A^0 = V \widetilde{A^0} U$ be the singular value decomposition of $A^0$ with
$
\widetilde{A^0} = \begin{pmatrix}
I_j & 0 \\
0 & G^0
\end{pmatrix},
$
where $I_j$ is the $j \times j$ unit matrix and $G^0$ is a diagonal $(n-j) \times (n-j)$ matrix  with $\|G^0\| < 1$. 

\textbf{Step 1.} We give an explicit representation for all operators in $\calW([A],[b])$.

Fix an arbitrary operator $W_{\psi,\varphi} \in \calW([A],[b])$, i.e. $\varphi(z) = Az + b$ with $A \in [A], b \in [b]$. 
 By Lemma \ref{lem-sim}, we have
$$
A = V A^1 U \text{ with } A^1 = \begin{pmatrix}
I_{j} & 0 \\
0 & G
\end{pmatrix},
$$
where $G$ is an $(n-j) \times (n-j)$ matrix and $\|G\| < 1$.

We put
$$
\psi_1(z) = \psi(U^*z), \ \ \varphi_1(z) = A^1 z + b^1,\ \ b^1 = V^*b.
$$
Similarly to \eqref{eq-w-nor}, we have
$$
W_{\psi,\varphi} = C_U W_{\psi_1,\varphi_1}C_V \text{ and } W_{\psi_1,\varphi_1} = C_{U^*} W_{\psi,\varphi}C_{V^*}.
$$
This shows that the operator $W_{\psi_1,\varphi_1}$ belongs to $\calC_w(\calF^p(\CC^n), \calF^q(\CC^n))$, hence, by \cite[Proposition 3.1]{TK-17-1}, $m(\psi_1,\varphi_1) < \infty$.
 Then applying the argument of Lemma \ref{lem-psiphi} to the pair $(\psi_1,\varphi_1)$, we get that
$\psi_1(z) = e^{-\left\langle z_{[j]}, b^1_{[j]} \right\rangle} \psi_{1,*}(z'_{[j]})$ with  some nonzero entire function $\psi_{1,*}$ of $z'_{[j]}$ on $\mathbb C^{n-j}$. 

Moreover, since $b \in [b]$, $(V^*b)_{[j]} = (V^*b^0)_{[j]}$, hence, $b^1_{[j]} = (V^*b^0)_{[j]}$. Therefore, for each $z \in \mathbb C^n$
\begin{equation*}
\psi_1(z) = e^{-\left\langle z_{[j]}, (V^*b^0)_{[j]} \right\rangle} \psi_{1,*}(z'_{[j]}) \text{ and } \varphi_1(z) = A^1 z + \big((V^*b^0)_{[j]}, (b^1)'_{[j]}\big).
\end{equation*}
Then, for each $f \in \calF^p(\mathbb C^n)$ and $z \in \mathbb C^n$, we have
\begin{align*}
W_{\psi_{1},\varphi_{1}}f(z) &= \psi_{1,*}(z'_{[j]})e^{-\left\langle z_{[j]}, (V^*b^0)_{[j]} \right\rangle} f\big(z_{[j]}+(V^*b^0)_{[j]}, Gz'_{[j]} + (b^1)'_{[j]}\big) \\
& = (T_{(V^*b^0)_{[j]}} W_{\psi_{1,*},\varphi_{1,*}})f(z),
\end{align*}
where 
\begin{equation}\label{eq-phi1}
\varphi_{1,*}(z) = A^1 z + \big(0_{[j]}, (b^1)'_{[j]}\big) = \big(z_{[j]}, Gz'_{[j]} + (b^1)'_{[j]}\big)
\end{equation}
and
\begin{equation}\label{eq-T}
T_{(V^*b^0)_{[j]}}f(z_{[j]}, z'_{[j]}) = e^{-\left\langle z_{[j]},(V^*b^0)_{[j]} \right\rangle} f\big(z_{[j]} + (V^*b^0)_{[j]},z'_{[j]}\big), f \in \calF^p(\mathbb C^n).
\end{equation}
We can check that $T_{(V^*b^0)_{[j]}}$ is, in fact, an invertible bounded weighted composition operator on each space $\calF^p(\mathbb C^n)$ with 
$$
T^{-1}_{(V^*b^0)_{[j]}} = e^{-\left|(V^*b^0)_{[j]}\right|^2} T_{-(V^*b^0)_{[j]}} \text{ and } \|T_{(V^*b^0)_{[j]}}\| = e^{\frac{\left|(V^*b^0)_{[j]}\right|^2}{2}}.
$$

Consequently, each operator $W_{\psi,\varphi}$ in $\calW([A],[b])$ can be represented as follows
$$
W_{\psi, \varphi} = C_U T_{(V^*b^0)_{[j]}} W_{\psi_{1,*},\varphi_{1,*}} C_V,
$$
where $\psi_{1,*}$ is a nonzero entire function of $z'_{[j]}$ and $\varphi_{1,*}$ is of \eqref{eq-phi1} and $T_{(V^*b^0)_{[j]}}$ is defined in \eqref{eq-T}.

\textbf{Step 2.} We show that the set $\calW([A],[b])$ is closed in the space $C_{w}(\calF^p(\mathbb C^{n}), \calF^q(\mathbb C^{n}))$. To do this, we take an arbitrary sequence $(W_{\psi_i, \varphi_i})_i$ from  $\calW([A],[b])$ converging to an operator $W_{\chi, \phi}$ in $C_{w}(\calF^p(\mathbb C^{n}), \calF^q(\mathbb C^{n}))$.
Suppose that $\varphi_i(z) = A^i z + b^i$, $A^i \in [A], b^i \in [b]$ for each $i \in \mathbb N$, and $\phi(z) = Dz + e$. We have to prove that $W_{\chi,\phi}$ is also in $\calW([A],[b])$, that is, $D \in [A]$ and $e \in [b]$.

By Step 1, for each $i \in \mathbb N$,
$$
W_{\psi_i, \varphi_i} = C_U T_{(V^*b^0)_{[j]}} W_{\psi_{i,1,*},\varphi_{i,1,*}} C_V,
$$
where
$\psi_{i,1,*}$ is a nonzero entire function of $z'_{[j]}$ and
$$
\varphi_{i, 1,*}(z) = A^{i,1} z + \big(0_{[j]}, (b^{i,1})'_{[j]}\big) = \big(z_{[j]}, G^i z'_{[j]} + (b^{i,1})'_{[j]}\big),
$$
with $b^{i,1} = V^*b^i$ and 
$
A^{i,1} = \begin{pmatrix}
I_{j} & 0 \\
0 & G^i
\end{pmatrix},
$
where $G^i$ is an $(n-j) \times (n-j)$ matrix with $\|G^i\| < 1$.

Consider the sequence $(W_{\psi_{i,1,*},\varphi_{i,1,*}})_i$. For every $i \in \mathbb N$, obviously, $W_{\psi_{i,1,*},\varphi_{i,1,*}} \in C_{w}(\calF^p(\mathbb C^{n}), \calF^q(\mathbb C^{n}))$, and we have
\begin{align*}
& \| W_{\psi_{i,1,*},\varphi_{i,1,*}} -  T^{-1}_{(V^*b^0)_{[j]}} C_{U^*} W_{\chi,\phi}   C_{V^*}\| \\
= &\; \|T^{-1}_{(V^*b^0)_{[j]}} C_{U^*} W_{\psi_{i},\varphi_{i}}   C_{V^*} - T^{-1}_{(V^*b^0)_{[j]}} C_{U^*} W_{\chi,\phi}   C_{V^*}\|\\
\leq & \; \|T^{-1}_{(V^*b^0)_{[j]}}\| \|W_{\psi_i,\varphi_i} - W_{\chi,\phi} \| \to 0, \text{ as } i \to \infty.
\end{align*}
That is, the sequence $(W_{\psi_{i,1,*},\varphi_{i,1,*}})_i$ converges to the weighted composition operator $W_{\chi_{1,*},\phi_{1,*}} = T^{-1}_{(V^*b^0)_{[j]}} C_{U^*} W_{\chi,\phi}   C_{V^*}$ in $C_{w}(\calF^p(\mathbb C^{n}), \calF^q(\mathbb C^{n}))$. Suppose that $\phi_{1,*}(z) = D^1z + e^1$ with 
$$
D^1 = \begin{pmatrix}
D_{11} & D_{12} \\
D_{21} & D_{22}
\end{pmatrix}
$$
where $D_{11}$, $D_{12}$, $D_{21}$ and $D_{22}$ are $j \times j$, $j \times (n-j)$, $(n-j) \times j$ and $(n-j) \times (n-j)$ matrices, respectively. 
We will show that $D_{11} = I_{j}$, $D_{12} = 0$, $D_{21} = 0$, $\|D_{22}\| < 1$ and $(e^1)_{[j]} = 0$.

For every $f \in \calF^p(\mathbb C^n)$, the sequence $(W_{\psi_{i,1,*},\varphi_{i,1,*}} f)_i$ converges to $W_{\chi_{1,*},\phi_{1,*}}f$ in $\calF^q(\mathbb C^n)$. That is, 
\begin{equation}\label{eq-cl}
\psi_{i,1,*}(z'_{[j]})f\big(z_{[j]}, G^iz'_{[j]} + (b^{i,1})'_{[j]}\big) \to \chi_{1,*}(z) f(D^1z + e^1)
\end{equation}
in $\calF^q(\mathbb C^n)$, and hence, pointwise on $\mathbb C^n$. 

Firstly, with $f(z) \equiv 1$ in $\calF^p(\mathbb C^n)$, \eqref{eq-cl} means that $\psi_{i,1,*}$ pointwise converges to $\chi_{1,*}$ on $\mathbb C^n$. Since $\psi_{i,1,*}$ does not depend on $z_{[j]}$, so does $\chi_{1,*}$, that is, $\chi_{1,*}$ is also a nonzero entire function of $z'_{[j]}$.

Secondly, for every $f \in \calF^p(\mathbb C^n)$ independent on $z'_{[j]}$, \eqref{eq-cl} means that for every $z \in \mathbb C^n$,
$$
\psi_{i,1,*}(z'_{[j]})f(z_{[j]}) \to \chi_{1,*}(z'_{[j]})f\big((D^1z + e^1)_{[j]}\big).
$$
On the other hand, since $\psi_{i,1,*}(z'_{[j]}) \to \chi_{1,*}(z'_{[j]})$, hence 
$$
\psi_{i,1,*}(z'_{[j]})f(z_{[j]}) \to \chi_{1,*}(z'_{[j]})f(z_{[j]}) \text{ for all } z \in \mathbb C^n.
$$
Consequently, $f\big((D^1z + e^1)_{[j]}\big) = f(z_{[j]})$ for all $z \in \mathbb C^n$ and $f \in \calF^p(\mathbb C^n)$ independent on $z'_{[j]}$. From this we can conclude that for every $z \in \mathbb C^n$,
$(D^1z + e^1)_{[j]} = z_{[j]}, \text{ i. e. } D_{11} z_{[j]} + D_{12}z'_{[j]} + (e^1)_{[j]} = z_{[j]}$. Thus, $D_{11} = I_{j}$ and $D_{12} = 0$ and $(e^1)_{[j]} = 0$.

Thirdly, for every $f \in \calF^p(\mathbb C^n)$ independent on $z_{[j]}$, \eqref{eq-cl} means that for every $z \in \mathbb C^n$,
$$
\psi_{i,1,*}(z'_{[j]})f\big(G^i z'_{[j]} + (b^{i,1})'_{[j]}\big) \to \chi_{1,*}(z'_{[j]})f\big((D^1z + e^1)'_{[j]}\big)
$$ 
Since $\psi_{i,1,*}(z'_{[j]})f\big(G^i z'_{[j]} + (b^{i,1})'_{[j]}\big)$ are independent on $z_{[j]}$ for all $i \in \mathbb C^n$, so is $\chi_{1,*}(z'_{[j]})f\big((D^1z + e^1)'_{[j]}\big)$. Therefore, 
$(D^1z + e^1)'_{[j]} = D_{21}z_{[j]} + D_{22} z'_{[j]} + (e^1)'_{[j]}$
is independent on $z_{[j]}$, and hence, $D_{21} = 0$. 

It remains to prove that $\|D_{22}\| < 1$. For the mappings
$$
\varphi_{i,2,*}(z'_{[j]}) = G^i z'_{[j]} + (b^{i,1})'_{[j]} \text{ and } \phi_{2,*}(z'_{[j]}) = D_{22} z'_{[j]} + (e^1)'_{[j]},
$$
the operators $W_{\psi_{i,1,*},\varphi_{i,2,*}}$ and $W_{\chi_{1,*},\phi_{2,*}}$ can be considered as two operators in the space $\calC_w(\calF^p(\mathbb C^{n-j}), \calF^q(\mathbb C^{n-j}))$, by convention that $\CC^{n-j} = \{z'_{[j]} = (z_{j+1},...,z_n): z \in \CC^n\}$ . We show it for $W_{\psi_{i,1,*},\varphi_{i,2,*}}$ (similarly for $W_{\chi_{1,*},\phi_{2,*}}$). For every $f \in \calF^p(\mathbb C^{n-j})$, i. e. for every $f \in \calF^p(\mathbb C^{n})$ independent on $z_{[j]}$, we have $\|f\|_{n-j,p} = \|f\|_{n,p}$ and
\begin{align*}
W_{\psi_{i,1,*},\varphi_{i,2,*}}f(z'_{[j]}) = \psi_{i,1,*}(z'_{[j]})f\big(G^i z'_{[j]} + (b^{i,1})'_{[j]}\big) = W_{\psi_{i,1,*},\varphi_{i,1,*}}f(z),
\end{align*}
hence $\|W_{\psi_{i,1,*},\varphi_{i,2,*}}f\|_{n-j,q} = \|W_{\psi_{i,1,*},\varphi_{i,1,*}}f\|_{n-j,q} = \|W_{\psi_{i,1,*},\varphi_{i,1,*}}f\|_{n,q}$. 
Then, 
\begin{align*}
\|W_{\psi_{i,1,*},\varphi_{i,2,*}}\| &= \sup \{ \|W_{\psi_{i,1,*},\varphi_{i,2,*}}f\|_{n-j,q}: f \in \calF^p(\mathbb C^{n-j}), \|f\|_{n-j,p} \leq 1\}\\
& = \sup \{ \|W_{\psi_{i,1,*},\varphi_{i,1,*}}f\|_{n,q}: f \in \calF^p(\mathbb C^{n-j}), \|f\|_{n,p} \leq 1\} \\
& \leq \sup \{ \|W_{\psi_{i,1,*},\varphi_{i,1,*}}f\|_{n,q}: f \in \calF^p(\mathbb C^{n}), \|f\|_{n,p} \leq 1\} \\
& = \|W_{\psi_{i,1,*},\varphi_{i,1,*}}\|.
\end{align*}
Repeating this argument for $\|W_{\psi_{i,1,*},\varphi_{i,2,*}} - W_{\chi_{1,*},\phi_{2,*}}\|$, we  get that
$
\|W_{\psi_{i,1,*},\varphi_{i,2,*}} - W_{\chi_{1,*},\phi_{2,*}}\|  \leq \|W_{\psi_{i,1,*},\varphi_{i,1,*}} - W_{\chi_{1,*},\phi_{1,*}}\|$ for all $i \in \mathbb N$.
It implies that the sequence $(W_{\psi_{i,1,*},\varphi_{i,2,*}})_i$ converges to $W_{\chi_{1,*},\phi_{2,*}}$ in $C_w(\calF^p(\mathbb C^{n-j}), \calF^q(\mathbb C^{n-j}))$.

On the other hand, since $\|G^i\| < 1$, all operators $W_{\psi_{i,1,*},\varphi_{i,2,*}}$ belong to the set $C_{w,0}(\calF^p(\mathbb C^{n-j}), \calF^q(\mathbb C^{n-j}))$, which, by Proposition \ref{prop-cl}, is closed in $C_w(\calF^p(\mathbb C^{n-j}), \calF^q(\mathbb C^{n-j}))$.
Therefore, $W_{\chi_{1,*},\phi_{2,*}}$ also belongs to the set $C_{w,0}(\calF^p(\mathbb C^{n-j}), \calF^q(\mathbb C^{n-j}))$, and hence, $\|D_{22}\| < 1$.

Thus, we have
$$
W_{\chi,\phi} = C_{U} T_{(V^*b^0)_{[j]}} W_{\chi_{1,*},\phi_{1,*}} C_{V},
$$
where
$\phi_{1,*}(z) = D^1z + e^1$ with $(e^1)_{[j]} = 0$ and
$ D^1 = \begin{pmatrix}
I_j & 0 \\
0 & D_{22}
\end{pmatrix} $
with $\|D_{22}\| < 1$. From this it follows that
$$
\chi(z) = e^{-\left\langle (Uz)_{[j]}, (V^*b^0)_{[j]} \right\rangle} \chi_{1,*}((Uz)'_{[j]})  \text{ and }
\phi(z) = Dz + e,
$$
where
$$
D = V D^1 U = V \begin{pmatrix}
I_{j} & 0 \\
0 & D_{22}
\end{pmatrix} U 
\text{ and } e = V\big((V^*b^0)_{[j]}, (e^1)'_{[j]}\big).
$$
This and Lemma \ref{lem-sim} imply that $D \in [A]$ and $(V^*e)_{[j]} = (V^*b^0)_{[j]}$, and hence, $D \in [A]$ and $e \in [b]$. That is, $W_{\chi,\phi}$ belongs to $\calW([A], [b])$.

\textbf{Step 3.} We prove that the set $\calW([A],[b])$ is path connected. 
Let $W_{\psi,\varphi}$ and $W_{\chi, \phi}$ be two operators in $\calW([A],[b])$ with $\varphi(z) = Az +b$ and $\phi(z) = Dz + e$. Then by Step 1,
$$
W_{\psi,\varphi} = C_U T_{(V^*b^0)_{[j]}} W_{\psi_{1,*},\varphi_{1,*}} C_V
\text{ and }
W_{\chi,\phi} = C_U T_{(V^*b^0)_{[j]}} W_{\chi_{1,*},\phi_{1,*}} C_V,
$$
where
$\psi_{1,*}, \chi_{1,*}$ are nonzero entire functions of $z'_{[j]}$, the function $\varphi_{1,*}$ is of \eqref{eq-phi1} and
$$
\phi_{1,*}(z) = D^1 z + \big(0_{[j]}, (e^1)'_{[j]}\big) = \big(z_{[j]}, Hz'_{[j]} + (e^1)'_{[j]}\big),
$$
with $e^1 = V^*e$ and 
$
D^1 = \begin{pmatrix}
I_{j} & 0 \\
0 & H
\end{pmatrix},
$
where $H$ is an $(n-j) \times (n-j)$ matrix with $\|H\| < 1$.

From this and Theorem \ref{thm-co}, the operators $C_{\varphi_{1,*}}$ and $C_{\phi_{1,*}}$ are bounded from $\calF^p(\mathbb C^n)$ to $\calF^q(\mathbb C^n)$. Then, by Lemma \ref{lem-tp-psi}, $W_{\psi_{1,*},\varphi_{1,*}}\sim C_{\varphi_{1,*}}$ and $W_{\chi_{1,*},\phi_{1,*}} \sim C_{\phi_{1,*}}$ in $\calC_{w}(\calF^p(\mathbb C^{n}),\calF^q(\mathbb C^{n}))$. 

On the other hand, obviously $A^1 \sim D^1$. Then by Theorem \ref{thm-tp-eq-co}, $C_{\varphi_{1,*}} \sim C_{\phi_{1,*}}$ in $\calC(\calF^p(\mathbb C^{n}),\calF^q(\mathbb C^{n}))$, and hence, in $\calC_{w}(\calF^p(\mathbb C^{n}),\calF^q(\mathbb C^{n}))$. So we have $W_{\psi_{1,*},\varphi_{1,*}}\sim C_{\varphi_{1,*}} \sim C_{\phi_{1,*}} \sim W_{\chi_{1,*},\phi_{1,*}}$ in $\calC_{w}(\calF^p(\mathbb C^{n}),\calF^q(\mathbb C^{n}))$.
It means that there is a continuous path $\mathcal{P}_1(t), t \in [0,1],$ connecting $W_{\psi_{1,*},\varphi_{1,*}}$ and $W_{\chi_{1,*},\phi_{1,*}}$ in the space $\calC_{w}(\calF^p(\mathbb C^{n}),\calF^q(\mathbb C^{n}))$. Thus, the map
$$
\mathcal{P}(t) = C_U T_{(V^*b^0)_{[j]}} \mathcal P_1(t) C_V, t \in [0,1],
$$
is a continuous path connecting $W_{\psi,\varphi}$ and $W_{\chi,\phi}$ in $\calC_{w}(\calF^p(\mathbb C^{n}),\calF^q(\mathbb C^{n}))$. Indeed, the operators $C_U$, $T_{(V^*b^0)_{[j]}}$, $C_V$ are nonzero bounded weighted composition operators on every Fock space $\calF^p(\mathbb C^{n})$ and all operators $\mathcal P_1(t), t \in [0,1]$, are in the space $\calC_w(\calF^p(\mathbb C^{n}), \calF^q(\mathbb C^{n}))$. Hence, the operators $\mathcal{P}(t) = C_U T_{(V^*b^0)_{[j]}} \mathcal P_1(t) C_V$ belong to $\calC_{w}(\calF^p(\mathbb C^{n}),\calF^q(\mathbb C^{n}))$ for all $t \in [0,1]$. Moreover, for every $t, t_0 \in [0,1]$ we have
\begin{align*}
 \|\mathcal P(t) - \mathcal P(t_0)\| &=  \|C_U T_{(V^*b^0)_{[j]}} \mathcal P_{1}(t) C_V - C_U T_{(V^*b^0)_{[j]}} \mathcal P_{1}(t_0) C_V\|\\
& \leq \|T_{(V^*b^0)_{[j]}}\| \|\mathcal P_{1}(t) - \mathcal P_{1}(t_0)\| \to 0, \text{ as } t \to t_0.
\end{align*}
Thus, $W_{\psi, \varphi}$ and $W_{\chi,\phi}$ are in the same path connected component of $C_{w}(\calF^p(\mathbb C^{n}), \calF^q(\mathbb C^{n}))$. 

On the other hand, by Step 2, all sets $\calW([A'],[b'])$ with $[A'] \in [E_n]$ and $[b'] \in [\CC^n]_{[A']}$ are closed in the space $\calC_w(\calF^p(\CC^n), \calF^q(\CC^n))$; moreover, they are disjoint. Thus, the path $ \mathcal{P}(t)$ connecting $W_{\psi,\varphi}$ and $W_{\chi,\phi}$ in $\calC_{w}(\calF^p(\mathbb C^{n}),\calF^q(\mathbb C^{n})$ must be in the set $\calW([A],[b])$.

From this Step 3 follows.
\end{proof}

From Propositions \ref{prop-tp-wco-0}, \ref{prop-cl}, \ref{prop-main-tp-wco} we get immediately the following result.

\begin{thm}\label{thm-main-tp}
Let $0 < p \leq q < \infty$, the space $\calC_{w}(\calF^p(\mathbb C^n), \calF^q(\mathbb C^n))$ has the following (path) connected components:
$$
\calC_{w}(\calF^p(\mathbb C^n), \calF^q(\mathbb C^n)) 
= \bigcup_{[A] \in [E_n]} \bigcup_{[b] \in [\mathbb C^n]_{[A]}}\calW([A],[b]).
$$
\end{thm}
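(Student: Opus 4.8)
The plan is to recognize that this statement is just the assembly, over all pairs of classes, of what was already obtained in Propositions~\ref{prop-tp-wco-0}, \ref{prop-cl} and \ref{prop-main-tp-wco}, the one genuinely new ingredient being that each piece $\calW([A],[b])$ is not only closed but also \emph{open} in $\calC_{w}(\calF^{p}(\CC^{n}),\calF^{q}(\CC^{n}))$. \textbf{Step 1 (the pieces partition the space).} By \cite[Section~3]{TK-17-1} every operator $W_{\psi,\varphi}\in\calC_{w}(\calF^{p}(\CC^{n}),\calF^{q}(\CC^{n}))$ has $\varphi(z)=Az+b$ with $A\in E_{n}$, so $A$ lies in a unique class $[A]\in[E_{n}]$ and, once $[A]$ is fixed, $b$ lies in a unique class $[b]\in[\CC^{n}]_{[A]}$; hence $W_{\psi,\varphi}$ belongs to exactly one set $\calW([A],[b])$, so these sets are pairwise disjoint and cover the whole space. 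By Propositions~\ref{prop-tp-wco-0} and \ref{prop-cl} the set $\calW([0],[0])=\calC_{w,0}(\calF^{p}(\CC^{n}),\calF^{q}(\CC^{n}))$ is closed and path connected, and by Proposition~\ref{prop-main-tp-wco} so is every $\calW([A],[b])$; in particular each $\calW([A],[b])$ is connected, hence contained in a single connected component and a single path connected component of the space, so the partition into the sets $\calW([A],[b])$ refines the partition into components.

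\textbf{Step 2 (each piece is open).} Granting this, the sets $\calW([A],[b])$ are clopen, connected, path connected and form a partition, hence are exactly the connected components and the path connected components, which is the claim. That $\calC_{w,0}(\calF^{p}(\CC^{n}),\calF^{q}(\CC^{n}))$ is open follows because if $W_{\psi_{i},\varphi_{i}}\to W_{\psi,\varphi}$ with $\|A^{i}\|=1$, then evaluating the operators at the functions $1,z_{1},\dots,z_{n}\in\calF^{p}(\CC^{n})$ forces $A^{i}\to A$ entrywise, whence $\|A\|=1$. For a nonzero class $[A]$, fix $W_{\psi,\varphi}\in\calW([A],[b])$ and, by Step~1 of the proof of Proposition~\ref{prop-main-tp-wco}, write $W_{\psi,\varphi}=C_{U}T_{(V^{*}b^{0})_{[j]}}W_{\psi_{1,*},\varphi_{1,*}}C_{V}$ with $j$ and $(V,U)$ as in Lemma~\ref{lem-sim}, where $\varphi_{1,*}$ fixes the first $j$ coordinates and $\psi_{1,*}$ is a nonzero entire function of $z'_{[j]}$ only. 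Given $W_{\chi,\phi}$, $\phi(z)=Dz+e$, close to $W_{\psi,\varphi}$, testing the difference on $1,z_{1},\dots,z_{n}$ forces $D$ near $A$, $e$ near $b$ and $\chi$ near $\psi$ in $\calF^{q}(\CC^{n})$; I claim this already forces $D\in[A]$ and $e\in[b]$. For $D\in[A]$: the case $\|D\|<1$ contradicts the openness of $\calC_{w,0}$ just proved, while if $\|D\|=1$ with the singular value $1$ of strictly smaller multiplicity than for $A$ (the only remaining possibility when $D$ is near $A$, using that the matching conditions of Remark~\ref{rem-sim} are closed), then testing $\|W_{\chi,\phi}-W_{\psi,\varphi}\|$ on the kernels $k_{\lambda\xi}$, $\xi$ a unit singular direction of $A$ lost by $D$ and $\lambda\to\infty$, gives a lower bound bounded away from $0$ exactly as in the proof of Proposition~\ref{prop-cl}. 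For $e\in[b]$: if $(V^{*}e)_{[j]}\neq(V^{*}b^{0})_{[j]}$, then after stripping the surjective isometries $C_{U},C_{V}$ the estimate reduces to comparing $T_{\alpha}W$ with $T_{\alpha'}W'$ for distinct coordinate translations $\alpha\neq\alpha'$ of $\CC^{j}$, and the elementary bound $\|T_{\alpha}-T_{\alpha'}\|\ge c_{0}>0$ for $\alpha\neq\alpha'$ (evaluate on a kernel $k_{w}$ whose $z_{[j]}$-block makes the relevant exponential phase equal $-1$, then apply Lemma~\ref{lem-F1}(i)), together with the fact that a large translation forces a correspondingly small weight detectable already on $1,z_{1},\dots,z_{n}$, again bounds the distance below. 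Hence every operator sufficiently close to $W_{\psi,\varphi}$ lies in $\calW([A],[b])$.

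\textbf{Step 3 (conclusion) and the main obstacle.} Combining Steps~1 and 2, the sets $\calW([A],[b])$, $[A]\in[E_{n}]$ and $[b]\in[\CC^{n}]_{[A]}$, are clopen, connected and partition $\calC_{w}(\calF^{p}(\CC^{n}),\calF^{q}(\CC^{n}))$, hence are exactly its connected components; since each is path connected they are also its path connected components, which is the displayed identity. The routine part is Step~1; the crux is Step~2, and within it the rigidity of the $[b]$-label: one must show that two bounded weighted composition operators whose translation vectors differ in the first $j$ coordinates cannot be made arbitrarily close, which rests on the non-convergence of the coordinate-translation operators $T_{\alpha}$ as $\alpha\to0$ together with the weight/translation trade-off forced by boundedness. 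Everything here uses only the kernel-testing technique already employed in Propositions~\ref{prop-est-norm} and \ref{prop-cl}, so the difficulty is careful bookkeeping rather than a new idea, but it is precisely what upgrades ``the $\calW([A],[b])$ refine the component partition'' to ``they equal it''.
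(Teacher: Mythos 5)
Your Step 1 is exactly what the paper does, and in fact it is essentially the paper's entire proof: Theorem \ref{thm-main-tp} is deduced there solely from the facts that the sets $\calW([A],[b])$ are pairwise disjoint, cover the space, and are each path connected and \emph{closed} (Propositions \ref{prop-tp-wco-0}, \ref{prop-cl}, \ref{prop-main-tp-wco}); no openness statement is proved or invoked for the weighted case (unlike Theorem \ref{thm-tp-co}, where openness comes from Proposition \ref{prop-est-norm}). So your Step 2 is a genuine departure, and the instinct behind it is defensible: a partition into clopen connected sets is unambiguously the partition into components, whereas the deduction from a closed partition is more delicate when the index set is uncountable. The problem is that Step 2, which you yourself identify as the crux, is not actually carried out, and several of its supporting claims fail as stated. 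First, testing $W_{\chi,\phi}-W_{\psi,\varphi}$ on $1,z_1,\dots,z_n$ does not force $D$, $e$, $\chi$ to be quantitatively ``near'' $A$, $b$, $\psi$: multiplication by a coordinate function is unbounded on Fock spaces, so from $\|\chi-\psi\|_{n,q}<\eps$ and $\|\chi\phi_k-\psi\varphi_k\|_{n,q}<\eps\|z_k\|_{n,p}$ one only extracts pointwise information off the zero set of $\psi$; this yields entrywise convergence of the symbols along a convergent \emph{sequence} of operators, not control on a norm neighbourhood. Second, to rule out $\|D\|<1$ for an operator close to one with $\|A\|=1$ you need the \emph{closedness} of $\calC_{w,0}$ (Proposition \ref{prop-cl}), not the openness you invoke.

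Third, and most seriously, the two rigidity statements that would give openness at a nonzero class --- that an operator with $\|D\|=1$ but $D\notin[A]$, or with $D\in[A]$ but $e\notin[b]$, stays uniformly far from $W_{\psi,\varphi}$ --- are asserted by analogy (``exactly as in the proof of Proposition \ref{prop-cl}'', ``the elementary bound $\|T_{\alpha}-T_{\alpha'}\|\ge c_0$'') but not proved, and they are precisely where the work lies. The kernel test along a lost unit singular direction produces a lower bound proportional to $|\widetilde{\psi}_*(0'_{[j]})|$, which may vanish; this is exactly why Proposition \ref{prop-cl} needs its Case 2 translation trick, and that trick would have to be redone for each of your separation claims. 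Likewise, a lower bound on $\|T_{\alpha}-T_{\alpha'}\|$ does not lower-bound $\|T_{\alpha}W-T_{\alpha'}W'\|$ for nearby $W,W'$, since $(T_{\alpha}-T_{\alpha'})W'$ can be small when $W'$ damps the directions on which $T_{\alpha}-T_{\alpha'}$ is large; your appeal to ``a large translation forces a correspondingly small weight'' points at the right phenomenon but is not an estimate. In short, your plan is a legitimate (and arguably more robust) alternative to the paper's closedness-only assembly, but the openness of $\calW([A],[b])$ --- the one ingredient you add --- is left unproved, so as written the proposal does not establish the theorem.
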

\begin{proof}
By Propositions \ref{prop-tp-wco-0}, \ref{prop-cl} and \ref{prop-main-tp-wco}, all sets $\mathcal W([A],[b])$ are path connected and closed in $\calC_{w}(\calF^p(\mathbb C^n), \calF^q(\mathbb C^n))$. Moreover, they are disjoint. Then, each set $\mathcal W([A],[b])$ is a path connected component, and, simultaneously, a connected component in $\calC_w(\calF^p(\mathbb C^n), \calF^q(\mathbb C^n))$.
\end{proof}
\bigskip
 
\textbf{Acknowledgement.} The article was carried out during the first-named author's stay at Division of Mathematical Sciences, School of Physical and Mathematical Sciences, Nanyang Technological University, as a postdoc fellow under the MOE's AcRF Tier 1 M4011724.110 (RG128/16). He would like to thank the institution for hospitality and support.

\bigskip

\end{document}